\newcommand\scalemath[2]{\scalebox{#1}{\mbox{\ensuremath{\displaystyle #2}}}} %% To make the equations smaller, in Table~\ref{1to35}
\newcommand{\ud}{\mathrm{d}} 
\newcommand{\uD}{\mathrm{D}} 
\newcommand{\oldphi}{\phi}
\renewcommand{\phi}{\varphi}
\newtheorem{theorem}{Theorem}[section]
\theoremstyle{definition}
\theoremstyle{remark}
\theoremstyle{remark}
\theoremstyle{conjecture}
\newtheorem{conjecture}[theorem]{Conjecture}
\numberwithin{equation}{section}
\begin{document}

\title{Ap{\'e}ry-like sequences defined by four-term recurrence relations}

%    Information for first author
\author{Shaun Cooper}
%    Address of record for the research reported here
\address{School of Mathematical and Computational Sciences,
Massey University,
Private Bag 102904, North Shore Mail Centre,
Auckland, New Zealand }
\email{s.cooper@massey.ac.nz}

%    General info
\subjclass[2020]{Primary 11F11, 33-02; Secondary 05A10, 11A07, 11B65, 11F20, 33C20}
\date{January 1, 1994 and, in revised form, June 22, 1994.}

\dedicatory{Dedicated to the memory of Richard Askey}

\keywords{
Almkvist--Zudilin numbers,
Ap{\'e}ry numbers,
asymptotic expansion,
Clausen's identity,
diagonal coefficient,
Domb numbers,
genus zero,
Lucas congruence,
modular form,
multivariate Taylor expansion,
Ramanujan's alternative theories,
self-starting sequence,
sporadic sequence,
supercongruence,
theta function identity. 
\\
\\
Status: to appear in ``Hypergeometric Functions and Their Generalizations'', Contemporary Math., AMS}
%---------------------------------------------------
\begin{abstract}
The Ap{\'e}ry numbers may be defined by a cubic three-term recurrence relation, that is, a three-term relation
where the coefficients are polynomials in the index of degree~$3$.

In this work, we first provide a systematic review of Ap{\'e}ry numbers and other related sequences that
satisfy quadratic or cubic three-term recurrence relations, and show how they are interrelated and how they
may be classified. This leads to sequences defined by cubic $k$-term recurrence relations. The cases
corresponding to $k=2$ in this framework lead
to Ramanujan's theories of elliptic functions to alternative bases, while the cases corresponding
to $k=3$ correspond to the Ap{\'e}ry, Domb, Almkvist--Zudilin numbers and other sequences that are well-studied.

We conduct a detailed analysis for the case $k=4$. Some of the sequences that arise are new.
Of particular interest are
ten sequences that are said to be self-starting in the sense that a single initial condition is
enough to start the recurrence relation. Of additional interest are two sequences which take values in~$\mathbb{Z}[i]$
and two others with values in $\mathbb{Z}[\sqrt{2}]$. Congruence properties and asymptotic expansions
for the ten self-starting sequences are
investigated and several conjectures are presented. For example, we conjecture that the
integer-valued sequence defined by
the recurrence relation
\begin{align*}
(n+1)^3T(n+1) &=2(2n+1)(5n^2+5n+2)T(n)  \\
&\qquad -8n(7n^2+1)T(n-1)+22n(2n-1)(n-1)T(n-2)
\end{align*}
and initial condition $T(0)=1$ satisfies a Lucas congruence
for every prime~$p$. Moreover, the sequence is conjectured to satisfy the supercongruence
$$
T(pn) \equiv T(n) \pmod{p^2} \quad\text{for all positive integers $n$}
$$
if $p=2,\;59$ or $5581$, and for no other primes $p<10^4$.

\end{abstract}

\maketitle

%---------------------------------------------------

\section{Introduction}
The Ap{\'e}ry numbers may be defined by the recurrence relation
\begin{equation}
\label{a2}
(n+1)^3A(n+1)=(2n+1)(17n^2+17n+5)A(n)-n^3A(n-1)
\end{equation}
and initial condition $A(0)=1$; the single initial condition is enough to start the recurrence, as can be seen by taking $n=0$ in~\eqref{a2}.
The Ap{\'e}ry numbers are famous for having been
introduced and used by R.~Ap{\'e}ry to prove that~$\zeta(3)$ is irrational, e.g., see~\cite{ape} or~\cite{vdp}.
An alternative formula for the Ap{\'e}ry numbers is the binomial sum
\begin{equation}
\label{a1}
A(n) = \sum_{k=0}^n {n \choose k}^2 {n+k \choose k}^2.
\end{equation}
This reveals that $A(n)$ is always an integer, a fact not obvious from the recurrence relation~\eqref{a2}.
The method of creative telescoping can be used to show that the $A(n)$ defined by the binomial sum~\eqref{a1}
satisfy the recurrence relation~\eqref{a2}. 
For example, using the computer algebra system Maple, and typing the commands:
\\

\begin{verbatim}
> with(sumtools):
> sumrecursion(binomial(n,k)^2*binomial(n+k,n)^2, k, A(n));
\end{verbatim}
produces the recurrence relation
$$
 \left( n-1 \right) ^{3}A \left( n-2 \right) - \left( 2\,n-1\right)\left( 17\,{n}^{2}-17
\,n+5 \right) A \left( n-1 \right) +A\left( n \right) {n}^{3}
$$
that is equal to zero. Hence, we deduce \eqref{a2} from~\eqref{a1}. 
Conversely, there does not seem to be an algorithm for determining~\eqref{a1} from~\eqref{a2} apart from for searching a database.

It is a remarkable result of F. Beukers~\cite{beukers} (see also ~\cite[Theorem 6.38]{cooperbook})
that the Ap{\'e}ry numbers are parameterised by modular forms according to the formula
\begin{equation}
\label{a3}
\prod_{j=1}^\infty \frac{(1-q^{2j})^7(1-q^{3j})^7}{(1-q^{j})^5(1-q^{6j})^5}
=\sum_{n=0}^\infty A(n)\left(q\,\prod_{j=1}^\infty \frac{(1-q^{j})^{12}(1-q^{6j})^{12}}{(1-q^{2j})^{12}(1-q^{3j})^{12}}\right)^{n},
\end{equation}
valid in a neighbourhood of $q=0$.

We briefly mention four other properties of the Ap{\'e}ry numbers. 
First, it has been shown by I.~Gessel~\cite{gessel} that
the Ap{\'e}ry numbers $A(n)$ satisfy the Lucas congruences for every prime $p$.
This means that for any positive integer $n$ and for $T(n) = A(n)$, the congruence
\begin{equation}
\label{a4}
T(n) \equiv T(n_0)T(n_1)\cdots T(n_r) \pmod p
\end{equation}
holds, where $n=n_0+n_1p+\cdots +n_rp^r$ is the $p$-adic expansion of $n$ in base $p$.

Second, the Ap{\'e}ry numbers have an asymptotic formula,
stated without proof by A.~van der Poorten~\cite{vdp}
and attributed to H.~Cohen, given by
\begin{equation}
\label{cohen}
A(n) \sim \frac{1}{2^{9/4}(\pi n)^{3/2}}\, (1+\sqrt{2})^{4n+2} \quad \mbox{as } n\rightarrow\infty.
\end{equation}
For a proof, see the work of M. Hirschhorn~\cite{mdh1}.

Third, it has been noted by various authors, e.g.,~\cite{bostan}, \cite{christol}, \cite{lipshitz}, \cite{straub} that the Ap{\'e}ry numbers occur in several ways as diagonal coefficients of multivariate Taylor expansions.
For example, A.~Straub~\cite{straub} showed that if
\begin{align*}
\lefteqn{\frac{1}{(1-x_1-x_2)(1-x_3-x_4)-x_1x_2x_3x_4}} \\
& = \sum_{n_1=0}^\infty \sum_{n_2=0}^\infty \sum_{n_3=0}^\infty \sum_{n_4=0}^\infty 
c(n_1,n_2,n_3,n_4) x_1^{n_1} x_2^{n_2} x_3^{n_3} x_4^{n_4}
\end{align*}
then
$$
c(n_1,n_2,n_3,n_4) = \sum_{k=0}^{\min\left\{n_1,n_3\right\}} {n_1 \choose k} {n_3 \choose k} {n_1+n_2-k \choose n_1} {n_3+n_4-k \choose n_3}.
$$
The diagonal coefficients are the coefficients $c(n,n,n,n)$ and are given by
$$
c(n,n,n,n) = \sum_{k=0}^n {n \choose k}^2 {2n-k \choose n}^2.
$$
Replacing $k$ with $n-k$ gives
$$
c(n,n,n,n) =  \sum_{k=0}^n {n \choose n-k}^2 {n+k \choose n}^2 = \sum_{k=0}^n {n \choose k}^2 {n+k \choose n}^2.
$$
Hence by~\eqref{a1}, the diagonal terms are the Ap{\'e}ry numbers. Further results have been
given by O.~Gorodetsky~\cite{gorodetsky}.

Finally, it was indicated by T. Sato~\cite{sato} that the Ap{\'e}ry numbers can be used to produce series for $1/\pi$, for example
$$
\frac{1}{\pi} = \left(72\sqrt{15}-160\sqrt{3}\right)\sum_{n=0}^\infty A(n) \left(\frac12-\frac{3\sqrt{5}}{20}+n\right)
\left(\frac{1-\sqrt{5}}{2}\right)^{12n}.
$$
Such formulas have been investigated in detail by H. H. Chan and H. Verrill~\cite{verrill} and further in~\cite{cooperbook}.

In view of the rich and varied properties of the Ap{\'e}ry numbers, there is interest in determining and classifying other sequences with
similar properties. 
The goal of this work is to exhibit some new sequences that satisfy recurrence relations similar to~\eqref{a2}.
One significant difference is that some of the new sequences are not integer-valued instead but take values in 
$\mathbb{Z}[i]$ or $\mathbb{Z}[\sqrt{2}]$. The modular parameterisations are established, and
Lucas type properties are conjectured. That leads to another significant difference in that some of the conjectured
congruences are for certain primes.

This work is organised as follows. Sections 2, 3 and 4 are largely expository and aim to provide
a summary of the known Ap{\'e}ry-like sequences along with their classification
and interrelations.
Thus, in Section~2 we review Zagier's six sporadic sequences and their modular parameterisations.
These sequences are integer-valued and are
defined by a three-term recurrence relation with quadratic coefficients.
Because the parameterising modular forms have weight one, we will also refer to Zagier's sequences
as the weight one examples.

In Section 3 we review the Ap{\'e}ry numbers along with other integer-valued sequences defined by
three-term recurrence relations with cubic coefficients. This leads to a theorem of Almkvist, van Straten
and Zudilin that places these sequences in one-to-one correspondence with Zagier's sequences by
means of a Clausen type identity involving the square of the generating function for the
weight one sequences.
Because the parameterising modular forms have weight two, we will refer to these sequences as weight two examples.
Key in understanding the connection between the weight one and weight two examples is the non-linear
differential equation
\begin{equation}
\label{key}
\uD^2Z - \frac{1}{2Z}\left(\uD Z\right)^2=HZ,
\end{equation}
where $\uD$ is the differential operator 
$$
\uD = BX \frac{\ud}{\ud X}
$$
and where $B^2$ and $H$ are polynomials in~$X$. In Theorem~\ref{interesting} it will be
shown that~\eqref{key} can be used to write down a simple $(k+1)$-term recurrence relation
for the coefficients $T(n)$ in the expansion
\begin{equation}
\label{keyT}
Z=\sum_{n=0}^\infty T(n)X^n,
\end{equation}
where $k$ is the maximum of the degrees of $B^2$ and $H$. It will be shown further in Section~10
how parameters in the asymptotic expansion for $T(n)$ can be obtained from
the polynomials $B^2$ and $H$.

In Section 4 we analyse a modular parameterisation of Ramanujan in detail,
and use \eqref{key} along with standard properties of theta functions to give a complete and direct proof
without using the usual inversion formula for elliptic functions or Clausen's formula.
Ramanujan's example is important because it provides a prototype for further modular parameterisations,
and this leads to further Ap{\'e}ry-type sequences which are defined by recurrence relations involving
two or more terms. A summary involving a large number of examples akin to Ramanujan's prototype
is provided in Tables~\ref{1to35} and~\ref{1to35b}.
We indicate how six of the examples in Table~\ref{1to35} are related to Zagier's weight one
sequences by another Clausen type theorem of Chan, Tanigawa, Yang and Zudilin.

The sequences in Table~\ref{1to35} that satisfy four-term recurrence relations
are studied in Sections 5--9.
Sections 5, 6, 7 and 8 provide analyses of the sequences for levels 11, 13, 14 and 15, respectively, and the 
remaining sequences are studied in Section~9. The sequences for levels 11, 14, 15 and 24 stand out in that
they are self-starting, by which we mean that the single initial condition $T(0)=1$ is enough to start the
recurrence relations. The self-starting sequences are conjectured to satisfy Lucas congruences. On the other hand,
the non self-starting sequences appear not to satisfy Lucas congruences.

The level 13 sequence is analysed in detail in Section~6. It is neither self-starting nor integer valued
but is shown to become integer-valued when multiplied by~$4^n$.
No conjectural congruences have been found for this sequence.

In Section~7, four sequences are obtained corresponding to level 14, two of which are new.
The new sequences take values in $\mathbb{Z}[\sqrt{2}]$ and are conjectured to satisfy Lucas congruences
for certain primes~$p$.

The level 15 sequences are studied on Section~8. Four sequences emerge, three
of which are new, and two of which take values in $\mathbb{Z}[i]$ and are conjectured to satisfy Lucas congruences
for certain primes~$p$.

The sequences for levels 20, 21 and 24 are discussed in Section~9.

In Section~10 we outline how parameters in the asymptotic
expansions of the coefficients $T(n)$ in \eqref{keyT} can be derived from the polynomials $B^2$ and $H$
in~\eqref{key}.

Finally, in Section~11 we consider supercongruences satisfied by the self-starting sequences defined by
four-term recurrence relations. We find that such congruences exist, but are weaker, i.e., they
are modulo $p^2$ instead
of modulo $p^3$. Moreover, such congruences seem to be fairly rare and involve only certain primes.

Throughout this work we shall make use of the notation
$$
\eta_N = q^{N/24} \prod_{j=1}^\infty (1-q^{jN})
$$
where $N$ is any positive integer and $|q|<1$. We will occasionally use the compact notation for products from the theory
of basic hypergeometric series such as
$$
(a;q)_\infty = \prod_{j=0}^\infty (1-aq^j) \quad\text{and}\quad (a,b;q)_\infty =(a;q)_\infty (b;q)_\infty
$$
so that, for example,
$$
\eta_N = q^{N/24} (q^N;q^N)_\infty.
$$

%---------------------------------------------------

\section{Zagier's sporadic sequences: the six weight one examples}
\label{S:Zagier}
An old result of
Franel~\cite{1894} is that the sequence
defined by
$$
t_c(n)=\sum_{k=0}^n {n \choose k}^3
$$
satisfies the three-term recurrence relation
\begin{equation}
\label{r2}
(n+1)^{2}t_{c}(n+1)=(7n^{2}+7n+2)t_{c}(n)+8n^{2}t_{c}(n-1), \quad n\geq 0.
\end{equation}
Observe that the coefficients in \eqref{r2} are polynomials in $n$ of degree $2$, whereas the coefficients in the
recurrence relation~\eqref{a2} for the Ap{\'e}ry numbers in~\eqref{a2} have degree~3.
Another example where the coefficients are polynomials of degree~2, also due to Ap{\'e}ry~\cite{ape}, is the sequence defined by
$$
t_{5}(n) = \sum_{k=0}^{n} {n \choose k}^{2}{n+k \choose k}.
$$
It satisfies the three-term recurrence relation
\begin{equation}
\label{r3}
(n+1)^{2}t_{5}(n+1)=(11n^{2}+11n+3)t_{5}(n)+n^{2}t_{5}(n-1), \quad n\geq 0.
\end{equation}
Motivated by the common form of the relations~\eqref{r2} and~\eqref{r3}, D. Zagier~\cite{zagier} was led to consider sequences of the form
\begin{equation}
\label{zr1}
(n+1)^2 t(n+1)=(\alpha n^2 + \alpha n+\beta)t(n)+\gamma n^2 t(n-1),  \quad n\geq 0,
\end{equation}
where $\alpha$, $\beta$ and $\gamma$ are integers. He performed a
computer search for other integral sequences that satisfy the three-term recurrence relation.
The search yielded six sequences that are not either terminating (two successive terms that are zero), polynomial,
or the product of a polynomial with a
multinomial coefficient, corresponding to the parameter values
$(\alpha,\beta,\gamma) \in S$ where
\begin{equation}
\label{Sdef}
S= \left\{ (11,3,1), (-17,-6,-72), (10,3,-9), (7,2,8), (12,4,-32), (-9,-3,-27)\right\}.
\end{equation}
They are now called Zagier's sporadic sequences.
Remarkably, all six sequences can be expressed as sums of binomial coefficients,
and they can be parameterised by modular forms. 
The details are summarised in Table~\ref{table11}.
The levels of the modular forms are listed in the first column,
and the letters (A), (B) and (C) in that column have no special
meaning other than to distinguish the three different examples that occur for level 6.
The functions $z$ in Table~\ref{table11} are modular forms of weight one, and for this reason
we shall refer to the sequences as the weight one examples.

Each function $x$ in Table~\ref{table11} satisfies the differentiation formula
\begin{equation}
\label{qdxdq}
q\frac{\ud x}{\ud q} = z^2x(1-\alpha x-\gamma x^2)
\end{equation}
and each $z$ satisfies the second order linear differential equation with respect to~$x$ given by
\begin{equation}
\label{dzdx}
\frac{\ud}{\ud x} \left(x(1-\alpha x - \gamma x^2)\frac{\ud z}{\ud x}\right) = (\beta + \gamma x)z.
\end{equation}
The proofs of \eqref{qdxdq} are case-by-case for each pair of functions $x$ and $z$ in Table~\ref{table11}. For example,
see \cite[p. 315]{cooperbook} for the case of level 5; use \cite[(6.29) and Theorem~6.22]{cooperbook} for the three level 6 cases; 
use \cite[Theorem 8.23 and (8.64)]{cooperbook} to obtain the level 8 result; and use~\cite[Theorem 9.7 and (9.16)]{cooperbook}
in the case of level~9.

The differential equation~\eqref{dzdx} is equivalent to the recurrence relation~\eqref{zr1}.
For proofs that each pair $x$ and $z$ in Table~\ref{table11} satisfies the differential equation~\eqref{dzdx},
see, for example, Theorems 5.29, 6.34, 8.28, 9.10 in~\cite{cooperbook}.

\begin{table}
\caption{Zagier's sporadic sequences, their modular parameterisations 
and entries in Sloane's OEIS \cite{sloane}.}

\vspace{-0.1cm}
The notation is $\displaystyle{z=\sum_{n=0}^\infty t(n) x^n}$, where $t(0)=1$ and

$(n+1)^{2}t(n+1)= (\alpha n^{2}+\alpha n+\beta)t(n)+\gamma\,n^{2}t(n-1) \;\text{for}\; n\geq 0.$

\vspace{0.2cm}

\centering
{\renewcommand{\arraystretch}{3.5}
{\renewcommand{\arraystretch}{2.4}
{\renewcommand{\arraystretch}{2.8}
\centering
{\resizebox{12.5cm}{!}{
\begin{tabular}{|c|c|c|c|c|c|}
\hline
Level & $(\alpha,\beta,\gamma)$ & $x$ & $z$ & $t(n)$ & OEIS \\
\hline\hline
5 & $(11,3,1)$
&$q\dfrac{(q,q^4;q^5)_\infty^5}{(q^2,q^3;q^5)_\infty^5}$ 
& $\dfrac{(q;q)_\infty^2}{(q,q^4;q^5)_\infty^5}$ 
& $\displaystyle{\sum_{j} {n \choose j}^{2}{n+j \choose j}}$ & A005258
 \\
6 (A) & $\;\;(-17,-6,-72)\;\;$ 
&$\dfrac{\eta_2\eta_6^5}{\eta_1^5\eta_3}$ 
&$\dfrac{\eta_1^6\eta_6}{\eta_2^3\eta_3^2}$ 
& $\displaystyle{\sum_{j,\ell}(-8)^{n-j}{n \choose j}{j \choose \ell}^{3}}$ & A093388
\\
6 (B) & $(10,3,-9)$
&$\dfrac{\eta_1^4\eta_6^8}{\eta_2^8\eta_3^4}$
& $\dfrac{\eta_2^6\eta_3}{\eta_1^3\eta_6^2}$ 
&$\displaystyle{\sum_{j} {n \choose j}^{2}{2j \choose j}}$ & A002893
 \\
6 (C) & $(7,2,8)$
&$\dfrac{\eta_1^3\eta_6^9}{\eta_2^3\eta_3^9}$
& $\dfrac{\eta_2\eta_3^6}{\eta_1^2\eta_6^3}$ 
&$\displaystyle{\sum_{j} {n \choose j}^{3}}$ & A000172
\\
8 & $(-12,-4,-32)$
& $\dfrac{\eta_2^2\eta_8^4}{\eta_1^4\eta_4^2}$
& $\dfrac{\eta_1^4}{\eta_2^2}$ 
&$\;\;\displaystyle{(-1)^{n}\sum_j {n \choose j}{2j \choose j}{2n-2j \choose n-j}}\;\;$ & A081085
\\
9 & $(-9,-3,-27)$
& $\dfrac{\eta_9^3}{\eta_1^3}$
& $\dfrac{\eta_1^3}{\eta_3}$ 
&${\displaystyle{\sum_{j}} (-3)^{n-3j}  {n \choose j}{n-j \choose j}{n-2j \choose j}}$ & A291898
 \\ 
\hline
\end{tabular}}}}}}
\nopagebreak[4]
\label{table11}
\end{table}

%---------------------------------------------------

\section{Ap{\'e}ry numbers and the six weight two examples}
It has already been noted that in the recurrence relation~\eqref{a2} for the Ap{\'e}ry numbers
the coefficients are polynomials of degree~3, whereas the coefficients in the recurrence relation~\eqref{zr1} for Zagier's examples,
are polynomials of degree~2. Thus, the Ap{\'e}ry numbers are not an instance of one of Zagier's examples.
It is however a remarkable fact that Zagier's examples are in one-to-one correspondence with another set of sequences which include
the Ap{\'e}ry numbers as one of the examples.
We shall now describe the correspondence.

Suppose 
\begin{equation}
\label{zseries}
z=\sum_{n=0}^\infty t(n)x^n
\end{equation}
where the sequence $\left\{t(n)\right\}$ satisfies the three-term recurrence relation
\begin{equation}
\label{trec}
(n+1)^{2}t(n+1)= (\alpha n^{2}+\alpha n+\beta)t(n)+\gamma\,n^{2}t(n-1)
\end{equation}
and initial condition $t(0)=1$. Equivalently, $z$ satisfies the second order linear differential equation
$$
\frac{\ud}{\ud x} \left(x(1-\alpha x - \gamma x^2)\frac{\ud z}{\ud x}\right) = (\beta + \gamma x)z
$$
and initial condition $z(0)=1$. Now make the change of variables
\begin{equation}
\label{wxyz1}
w=\frac{x}{1-\alpha x -\gamma x^2} \quad\text{and}\quad y=(1-\alpha x - \gamma x^2)z^2.
\end{equation}
By the chain rule, the differentiation formula~\eqref{qdxdq} in the new variables becomes
\begin{equation}
\label{qdwdq}
q\frac{\ud w}{\ud q} = y\,w\,A
\end{equation}
where
\begin{equation}
\label{Adef}
A=\sqrt{(1+\alpha w)^2+4\gamma w^2}.
\end{equation}
Moreover, applying the same change of variables to the differential
equation~\eqref{dzdx} (e.g., see \cite[Theorem 5.41]{cooperbook} for details) shows that $y$ satisfies the second
order non-linear differential equation with respect to $w$ given by
\begin{equation}
\uD^2y-\frac{1}{2y}\left(\uD y\right)^2 = Hy
\label{D2y}
\end{equation}
where $\uD $ is the differential operator defined by
$$
\uD  = A\,w\,\frac{\ud}{\ud w}
$$
and $A$ is as in~\eqref{Adef}, and $H$ is given by
\begin{equation}
\label{Hdef}
H = (2\beta - \alpha) w - \frac{(\alpha^2+4\gamma)w^2}{2}.
\end{equation}
In order to solve~\eqref{D2y} it is useful to prove the following
slightly more general result.
\begin{theorem}
\label{interesting}
Suppose $Z=Z(X)$ is analytic in a neighbourhood of $X=0$ with $Z(0)=Z_0\neq 0$, and satisfies the second order non-linear
differential equation
\begin{equation}
\label{dZdXa}
\uD^2Z - \frac{1}{2Z}\left(\uD Z\right)^2=HZ,
\end{equation}
where $\uD$ is the differential operator 
$$
\uD = BX \frac{\ud}{\ud X},
$$
where
\begin{equation}
\label{generalB}
B=\sqrt{G}\quad\text{and}\quad G = 1+\sum_{j=1}^k g_jX^j,
\end{equation}
and
\begin{equation}
\label{generalH}
H=\sum_{j=1}^k h_jX^j.
\end{equation}
Then the coefficients $T(n)$ in the power series expansion
$$
Z=\sum_{n=0}^\infty T(n)X^n
$$
satisfy the $(k+1)$-term recurrence relation
\begin{equation}
\label{generalT}
(n+1)^3 T(n+1) +(n+1)\sum_{j=1}^k g_j(n+1-{\textstyle \frac{j}{2}})(n+1-j)T(n+1-j)
\end{equation}
$$
=2\sum_{j=1}^k h_j(n+1-{\textstyle \frac{j}{2}})T(n+1-j)
$$
where $g_j$ and $h_j$ are the coefficients in the polynomials~$G$ and~$H$ in~\eqref{generalB} and~\eqref{generalH}, respectively.
\end{theorem}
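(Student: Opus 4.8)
The plan is to remove the nonlinearity by differentiating the equation once, and then to read off the recurrence by extracting the coefficient of $X^m$. First I would clear the denominator in~\eqref{dZdXa}, rewriting it as
\[
2Z\,\uD^2Z-(\uD Z)^2=2HZ^2 .
\]
Applying $\uD$ to both sides, the two copies of $2\,\uD Z\,\uD^2Z$ that arise cancel, leaving $2Z\,\uD^3Z=4HZ\,\uD Z+2(\uD H)Z^2$; since $Z(0)=Z_0\neq0$, I may divide by $2Z$ to obtain the \emph{linear} third-order equation
\[
\uD^3Z=2H\,\uD Z+(\uD H)Z .
\]
This cancellation is the conceptual crux of the argument. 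It reflects the fact that $Z$ is locally the square of a solution $W$ of the second-order equation $\uD^2W=\tfrac12 HW$, so that $Z$ satisfies the symmetric square of that operator, which is third order and linear.

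Next I would eliminate the square root $B=\sqrt G$. Writing $\theta=X\,\ud/\ud X$ so that $\uD=B\theta$, the identity $B^2=G$ gives $B(\theta B)=\tfrac12\theta G$, and hence the operator identities
\[
\uD^2=\tfrac12(\theta G)\theta+G\theta^2,\qquad
\uD^3=B\bigl[\tfrac12(\theta^2G)\theta+\tfrac32(\theta G)\theta^2+G\theta^3\bigr].
\]
Every term of the linear equation above carries exactly one factor of $B$ (one from $\uD^3Z$, one from $\uD Z=B\theta Z$, and one from $\uD H=B\theta H$), and $B$ is a unit in the power series ring because $B(0)=1$. Cancelling $B$ throughout gives the square-root-free relation
\[
\tfrac12(\theta^2G)\,\theta Z+\tfrac32(\theta G)\,\theta^2Z+G\,\theta^3Z=2H\,\theta Z+(\theta H)Z .
\]

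Finally I would extract the coefficient of $X^m$. Using $\theta^iZ=\sum_n n^iT(n)X^n$ together with $G=1+\sum g_jX^j$, $H=\sum h_jX^j$, and the corresponding expansions of $\theta G,\theta^2G,\theta H$, the coefficient of $X^m$ reads
\[
m^3T(m)+\sum_{j\ge1}g_j(m-j)\bigl[(m-j)^2+\tfrac32 j(m-j)+\tfrac12 j^2\bigr]T(m-j)
=\sum_{j\ge1}\bigl[2h_j(m-j)+jh_j\bigr]T(m-j).
\]
The bracket factors as $(m-j)^2+\tfrac32 j(m-j)+\tfrac12 j^2=m\,(m-\tfrac{j}{2})$, while $2h_j(m-j)+jh_j=2h_j(m-\tfrac{j}{2})$; substituting $m=n+1$ then reproduces~\eqref{generalT} exactly. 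The only genuinely delicate point is the square-root bookkeeping of the middle step—verifying that the linear third-order equation is uniformly divisible by $B$—while the differentiation-and-cancellation step and the final factorisation are routine. I would close with the short self-consistency check that the recurrence at $m=0,1$ gives $0=0$ and $T(1)=h_1T(0)$, matching the data $Z(0)=Z_0$.
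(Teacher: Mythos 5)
Your proposal is correct and follows essentially the same route as the paper: apply $\uD$ once to linearise the equation into $\uD^3Z=2H\,\uD Z+(\uD H)Z$ (your denominator-clearing trick makes the cancellation slightly slicker, but it is the same manipulation), divide out the unit $B$ using $B^2=G$ to get the $\theta$-form third-order linear equation, and extract the coefficient of $X^m$; your factorisation of the bracket as $m(m-\tfrac{j}{2})(m-j)$ and of the right side as $2h_j(m-\tfrac{j}{2})$ checks out and reproduces~\eqref{generalT}. The only item in the paper's proof you omit is the Frobenius/indicial-equation observation ($r^3=0$), which is used to assert uniqueness of the analytic solution but is not needed for the recurrence itself.
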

\begin{proof}
Apply the operator $\uD $ to both sides of~\eqref{dZdXa} to obtain
\begin{align*}
\uD^3Z &= \frac{1}{Z} (\uD Z)(\uD^2Z) - \frac{1}{2Z^2}(\uD Z)^3 + \uD(HZ) \\
&= \frac{(\uD Z)}{Z} \left\{\uD^2Z-\frac{1}{2Z}(\uD Z)^2\right\} + H(\uD Z) + (\uD H)Z.
\intertext{The differential equation~\eqref{dZdXa} can be used to simplify the term in braces, and the result is}
\uD^3Z &= \frac{(\uD Z)}{Z} HZ + H(\uD Z) + (\uD H)Z = 2H(\uD Z) + (\uD H)Z.
\end{align*}
It follows that $Z$ satisfies the third order linear differential equation with respect to $X$ given by
$$
\uD^3Z-2H(\uD Z)-(\uD H)Z=0,
$$
or equivalently, after dividing by~$B$,
\begin{equation}
\label{D3y}
X\frac{\ud}{\ud X} \left(D^2Z\right) - 2HX\frac{\ud Z}{\ud X} - X\frac{\ud H}{\ud X} Z = 0.
\end{equation}
By direct calculation, and recalling from~\eqref{generalB} that $G=B^2$, it is easy to check that
$$
X\frac{\ud}{\ud X} \bigl(D^2Z\bigr) = G\bigl(\Delta^3Z\bigr)+\frac32\bigl(\Delta G\bigr)\bigl(\Delta^2Z\bigr) + \frac12\bigl(\Delta^2G\bigr)\bigl(\Delta Z\bigr)
$$
where
$$
\Delta = X \frac{\ud}{\ud X}.
$$
Hence the differential equation~\eqref{D3y} takes the form
$$
G\bigl(\Delta^3Z\bigr)+\frac32\bigl(\Delta G\bigr)\bigl(\Delta^2Z\bigr) + \frac12\bigl(\Delta^2G\bigr)\bigl(\Delta Z\bigr)
= 2H\bigl(\Delta Z \bigr) + \bigl(\Delta H\bigr)Z.
$$
Now substitute the expansions
$$
Z=\sum_{j=0}^\infty T(j)X^{j+r}, \quad G = 1+\sum_{j=1}^k g_jX^j\quad\text{and}\quad
H=\sum_{j=1}^k h_jX^j
$$
to obtain
\begingroup\makeatletter\def\f@size{9}\check@mathfonts
\def\maketag@@@#1{\hbox{\m@th\large\normalfont#1}}%
\begin{align}
\label{frobenius}
&\Biggl(1+\sum_{j=1}^k g_jX^j\Biggr)\Biggl(\sum_{j=0}^\infty (j+r)^3 T(j) X^{j+r}\Biggr) \\
&+\frac32\Biggl(\sum_{j=1}^k jg_jX^j\Biggr)\Biggl(\sum_{j=0}^\infty (j+r)^2 T(j) X^{j+r}\Biggr) 
+\frac12\Biggl(\sum_{j=1}^k j^2g_jX^j\Biggr)\Biggl(\sum_{j=0}^\infty (j+r) T(j) X^{j+r}\Biggr)  \nonumber \\
&\quad\quad = 2\Biggl(\sum_{j=1}^k h_jX^j\Biggr)\Biggl(\sum_{j=0}^\infty (j+r) T(j) X^{j+r}\Biggr) 
+\Biggl(\sum_{j=1}^k j h_jX^j\Biggr)\Biggl(\sum_{j=0}^\infty  T(j) X^{j+r}\Biggr). \nonumber
\end{align}
\endgroup
Extracting the coefficient of $X^r$ gives the indicial equation $r^3=0$, from which it follows that there is a unique solution that is analytic at $X=0$ and satisfies the initial condition $Z(0)=Z_0$. Finally, setting $r=0$ in~\eqref{frobenius} and extracting the
coefficient of $x^{n+1}$ leads to
\begin{align*}
&(n+1)^3T(n+1) + \sum_{j=1}^k g_j (n+1-j)^3T(n+1-j) \\
&+\frac32\sum_{j=1}^k  j g_j (n+1-j)^2 T(n+1-j)
+\frac12\sum_{j=1}^k  j^2 g_j (n+1-j) T(n+1-j) \\
&= 2\sum_{j=1}^k   h_j (n+1-j) T(n+1-j)
+\sum_{j=1}^k   j h_j T(n+1-j),
\end{align*}
where $T(n+1-j)$ is defined to be zero if $j>n+1$. This simplifies to give~\eqref{generalT}.
\end{proof}
Now we can return to solving~\eqref{D2y} for the data given by~\eqref{Adef} and~\eqref{Hdef}.
Application of Theorem~\ref{interesting} with
$$
Z=y,\quad X=w,\;\; B^2 = 1+2\alpha w +(\alpha^2+ 4\gamma) w^2\;\;\text{and}\;\;
H=(2\beta-\alpha)w-\frac{(\alpha^2+4\gamma)w^2}{2}
$$
gives
\begin{equation}
\label{yseries}
y=\sum_{n=0}^\infty s(n)w^n,
\end{equation}
where the coefficients $s(n)$ satisfy the recurrence relation
$$
(n+1)^3s(n+1) + 2\alpha (n+1)\left(n+\frac12\right)n s(n) + (\alpha^2+4\gamma)(n+1)(n)(n-1)s(n-1)
$$
$$
= 2(2\beta-\alpha)\left(n+\frac12\right)s(n)-(\alpha^2+4\gamma)ns(n-1),
$$
and this simplifies to
\begin{equation}
\label{srec}
(n+1)^3s(n+1) = -(2n+1)(\alpha n^2 + \alpha n+\alpha -2\beta)s(n) - (\alpha^2+4\gamma) n^3 s(n-1).
\end{equation}
On combining~\eqref{zseries}, \eqref{trec}, \eqref{wxyz1}, \eqref{yseries} and~\eqref{srec}, we deduce the following theorem of
G.~Almkvist, D.~van~Straten and W.~Zudilin (see~\cite{almkvist} or~\cite[p. 342]{cooperbook}):

\begin{theorem}
\label{ASZ}
Let $\left\{t(n)\right\}$ be the sequence that
satisfies the initial condition
$$
t(0)=1
$$
and the quadratic three-term recurrence relation
$$
(n+1)^2t(n+1)=(\alpha n^2+\alpha n+\beta)t(n) + \gamma n^2t(n-1)
$$
for $n\geq 0$. Then in a neighborhood of $x=0$, the following identity holds\textup:
\begin{equation}
\label{clauex1}
\left(\sum_{n=0}^\infty t(n)x^{n+\frac12}\right)^2
= \sum_{n=0}^\infty s(n)\left(\frac{x}{1-\alpha x-\gamma x^2}\right)^{n+1},
\end{equation}
where $\left\{s(n)\right\}$ satisfies the cubic three-term relation
\begin{equation}
\label{rr3}
(n+1)^3s(n+1) = -(2n+1)(\alpha n^2 + \alpha n+\alpha -2\beta)s(n) - (\alpha^2+4\gamma) n^3 s(n-1)
\end{equation}
and initial condition $s(0)=1$.
\end{theorem}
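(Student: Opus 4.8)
The plan is to realise the sequence $\{t(n)\}$ analytically and then transport everything through the change of variables \eqref{wxyz1}, so that the already-proved Theorem~\ref{interesting} does the work. First I would set $z=\sum_{n\ge0}t(n)x^n$; since the recurrence \eqref{trec} with $t(0)=1$ is equivalent to the second order linear differential equation \eqref{dzdx} together with $z(0)=1$, this $z$ is the unique solution analytic at $x=0$. I would then pass to $w=x/(1-\alpha x-\gamma x^2)$ and $y=(1-\alpha x-\gamma x^2)z^2$ as in \eqref{wxyz1}. As established in the passage leading to \eqref{D2y} (cf.\ \cite[Theorem 5.41]{cooperbook}), this converts \eqref{dzdx} into the non-linear equation $\uD^2y-\frac{1}{2y}(\uD y)^2=Hy$, with $\uD=Aw\,\ud/\ud w$, $A=\sqrt{(1+\alpha w)^2+4\gamma w^2}$ and $H$ as in \eqref{Hdef}. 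Because $w=x+O(x^2)$, the map $x\mapsto w$ is a local analytic isomorphism fixing $0$, so $y$ is analytic in $w$ near $w=0$ with $y(0)=1\neq0$.

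Next I would apply Theorem~\ref{interesting} directly to $y$, taking $Z=y$, $X=w$, $k=2$, $G=B^2=1+2\alpha w+(\alpha^2+4\gamma)w^2$ and $H=(2\beta-\alpha)w-\frac12(\alpha^2+4\gamma)w^2$, so that $g_1=2\alpha$, $g_2=\alpha^2+4\gamma$, $h_1=2\beta-\alpha$ and $h_2=-\frac12(\alpha^2+4\gamma)$. Writing $y=\sum_{n\ge0}s(n)w^n$, the recurrence \eqref{generalT} specialises to \eqref{srec}, and a short algebraic collapse (combining the $s(n)$ terms through $(n+\tfrac12)$ and the $s(n-1)$ terms through the identity $n[(n+1)(n-1)+1]=n^3$) reduces it to the claimed cubic three-term relation \eqref{rr3}; the initial value is $s(0)=y(0)=1$, which also supplies the hypothesis $Z_0\neq0$ of Theorem~\ref{interesting}.

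Finally I would recover the generating-function identity \eqref{clauex1} by bookkeeping. Multiplying $y=\sum s(n)w^n$ by $w$ and substituting back the definitions gives
$$
\sum_{n=0}^\infty s(n)\left(\frac{x}{1-\alpha x-\gamma x^2}\right)^{n+1}=w\,y=\frac{x}{1-\alpha x-\gamma x^2}\cdot(1-\alpha x-\gamma x^2)z^2=xz^2,
$$
while the left-hand side of \eqref{clauex1} is $\big(\sum t(n)x^{n+1/2}\big)^2=x\,z^2$ as well. The cancellation of the two factors of $(1-\alpha x-\gamma x^2)$, together with the half-integer exponent on the left being absorbed exactly by the extra power of $w$ on the right, is precisely what produces the clean identity; the squaring also removes any ambiguity in the branch of $x^{1/2}$.

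I expect the only genuine obstacle to sit in the step carried out before the theorem: verifying that \eqref{wxyz1} really does transform the linear equation \eqref{dzdx} into the non-linear equation \eqref{D2y} with exactly this $A$ and $H$. That is where the characteristic term $-\frac{1}{2y}(\uD y)^2$ is generated---it arises from differentiating the square $z^2$ against the weight $(1-\alpha x-\gamma x^2)$---and where one must track how $\ud/\ud x$ reexpresses through $w$. Granting that derivation, together with Theorem~\ref{interesting}, the remaining steps are routine reindexing and the cancellation displayed above.
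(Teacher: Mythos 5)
Your proposal is correct and follows essentially the same route as the paper: the change of variables \eqref{wxyz1} converting \eqref{dzdx} into \eqref{D2y} (deferred, as in the paper, to \cite[Theorem 5.41]{cooperbook}), followed by Theorem~\ref{interesting} with $g_1=2\alpha$, $g_2=\alpha^2+4\gamma$, $h_1=2\beta-\alpha$, $h_2=-\tfrac12(\alpha^2+4\gamma)$ to obtain \eqref{srec}, and the final bookkeeping $wy=xz^2$ giving \eqref{clauex1}. The algebraic collapse you describe, including the identity $n[(n+1)(n-1)+1]=n^3$, matches the paper's simplification of \eqref{srec} to \eqref{rr3} exactly.
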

In particular, taking $(\alpha,\beta,\gamma)=(-17,-6,-72)$ in the recurrence~\eqref{rr3} gives the recurrence~\eqref{a2}
for the Ap{\'e}ry numbers, and since the initial conditions are the same, we have $s(n) = A(n)$. Hence, this
instance of the theorem gives
$$
\left(\sum_{n=0}^\infty t(n)x^{n+\frac12}\right)^2
=\sum_{n=0}^\infty s(n)\left(\frac{x}{(1+8x)(1+9x)}\right)^{n+1}
$$
where $s(n)$ are the Ap{\'e}ry numbers and $t(n)$ are the numbers in the sequence labelled~6~(A) in Table~\ref{table11}.

The six sequences $\left\{s(n)\right\}$ that correspond to the parameter values $(\alpha,\beta,\gamma)\in S$,
where $S$ is given by~\eqref{Sdef},
are all integer valued and have expressions as sums of terms involving binomial coefficients
and the details are summarised in Table~\ref{table1}.
Explicit formulas for the respective modular forms $w$ and $y$ are also given in Table~\ref{table1}. Once again, proofs
are case-by-case and can be found, for example, in Chapters 5, 6, 8 and 9 of~\cite{cooperbook}.

The functions~$z$ in Table~\ref{table11} are modular forms of weight one, while the functions~$y$ in Table~\ref{table1} are
modular forms of weight two. For that reason we shall refer to the sequences~$t(n)$ and~$s(n)$ as the
weight one and weight two examples, respectively.

\begin{table}
\caption{Ap{\'e}ry-like sequences, their modular parameterisations, 
and entries in Sloane's OEIS \cite{sloane}. The Ap{\'e}ry numbers
correspond to the sequence labelled 6(A).} 

\vspace{-0.1cm}
The notation is $\displaystyle{y=\sum_{n=0}^\infty s(n)\,w^n}$, where $s(0)=1$ and

$(n+1)^3s(n+1)=-(2n+1)(\alpha n^2+\alpha n+\alpha-2\beta)s(n) - (\alpha^2+4\gamma)n^3s(n-1)$.

\vspace{0.2cm}

{\renewcommand{\arraystretch}{2.4}
{\renewcommand{\arraystretch}{2.9}
\centering
{\resizebox{12.5cm}{!}{
{\begin{tabular}{|c|c|c|c|c|c|}
\hline
Level & $(\alpha,\beta,\gamma)$ & $w$ & $y$ & $s(n)$ & OEIS \\
\hline
$5$
&$(11,3,1)$ 
& $\displaystyle{\frac{\eta_5^6}{\eta_1^6}}$ 
& $\displaystyle{\frac{\eta_1^5}{\eta_5}}$ 
& $\displaystyle{\sum_j (-1)^{j+n}{n \choose j}^3 {4n-5j \choose 3n}}$ 
& A229111 \\
\hline
$6$ (A)
&$(-17,-6,-72)$ 
& $\displaystyle{\frac{\eta_1^{12}\eta_6^{12}}{\eta_2^{12}\eta_3^{12}}}$ 
& $\displaystyle{\frac{\eta_2^7\eta_3^7}{\eta_1^5\eta_6^5}}$  
& $\displaystyle{\sum_j {n\choose j}^2{n+j\choose j}^2}$ 
& A005259 \\
\hline
$6$ (B)
&$(10,3,-9)$ 
& $\displaystyle{\frac{\eta_2^{6}\eta_6^{6}}{\eta_1^{6}\eta_3^{6}}}$ 
& $\displaystyle{\frac{\eta_1^4\eta_3^4}{\eta_2^2\eta_6^2}}$  
& $\displaystyle{(-1)^n\sum_j {n\choose j}^2{2j\choose j}{2n-2j \choose n-j}}$ 
& A002895 \\
\hline
$6$ (C)
&$(7,2,8)$ 
& $\displaystyle{\frac{\eta_3^{4}\eta_6^{4}}{\eta_1^{4}\eta_2^{4}}}$ 
& $\displaystyle{\frac{\eta_1^3\eta_2^3}{\eta_3\eta_6}}$  
& $\displaystyle{\sum_j (-3)^{n-3j}{n+j \choose j}{n \choose j}{n-j \choose j}{n-2j \choose j}}$ 
& A125143 \\
\hline
$8$
&$(12,4,-32)$ 
& $\displaystyle{\frac{\eta_1^{8}\eta_8^{8}}{\eta_2^{8}\eta_4^{8}}}$ 
& $\displaystyle{\frac{\eta_2^6\eta_4^6}{\eta_1^4\eta_8^4}}$  
& $\displaystyle{\sum_j {n \choose j}^2 {2j \choose n}^2}$ 
& A290575 \\
\hline
$9$
&$(-9,-3,-27)$ 
& $\displaystyle{\frac{\eta_1^{6}\eta_9^{6}}{\eta_3^{12}}}$ 
& $\displaystyle{\frac{\eta_3^{10}}{\eta_1^3\eta_9^3}}$  
& $\displaystyle{\sum_{j,\ell} {n \choose j}^2 {n \choose \ell}{j \choose \ell}{j+\ell \choose n}}$ 
& A290576 \\
\hline
\end{tabular}}}}}}
\label{table1}
\end{table}

%---------------------------------------------------

\section{Jacobi's parameterisation, Ramanujan's alternative theories, and other levels}
Perhaps the oldest modular parameterisation is due to Jacobi and involves the complete elliptic integral of the first kind. 
Ramanujan took the square of Jacobi's result to get~\cite[(25)]{ramanujan_pi}
\begin{equation}
\label{rj}
\left(\frac{2K}{\pi}\right)^2 = 1 + \left(\frac12\right)^3(2kk')^2+\left(\frac{1\cdot3}{2\cdot 4}\right)^3(2kk')^4+\cdots
=\sum_{n=0}^\infty {2n \choose n}^3 \left(\frac{kk'}{4}\right)^{2n}.
\end{equation}
He then utilised $q$-expansions for $K$, $k$ and $k'$ that are originally due to Jacobi.
A complete proof of the modular parameterisation of~\eqref{rj}, e.g., see~\cite[Th.~4.30]{cooperbook}, is normally
quite long and involves first establishing Jacobi's results and then invoking Clausen's identity for
the square of a ${}_2F_1$ hypergeometric function. In view of the fundamental importance of~\eqref{rj} in Ramanujan's work,
especially in deriving series for $1/\pi$, it is instructive and useful to have a direct proof. We shall give such a proof that
uses only well-known properties of theta functions that can be found in~\cite[Ch.~16 and 17]{Part3} or~\cite[Ch. 3]{cooperbook}. 
We will write $Z$ for $(2K/\pi)^2$ and $X$ for $(kk'/4)^2$. Here is a detailed formulation of Ramanujan's result~\eqref{rj}.

\begin{theorem}
\label{T4}
Let $w$, $X$ and $Z$ be defined by
$$
w=\frac{\eta_4^8}{\eta_1^8},\quad \frac{1}{X} = \frac{1}{w}+32+256w\quad\text{and}\quad
Z=\frac{\eta_1^2\eta_4^2}{X^{5/12}}.
$$
Then:
\begin{enumerate}
\item[(a)]
Another formula for $X$ is
$$
X = \frac{\eta_1^{24}\eta_4^{24}}{\eta_2^{48}}.
$$
Hence,
$$
Z=\frac{\eta_2^{20}}{\eta_1^8\eta_4^8}\quad\text{and}\quad Z^2X = \frac{\eta_1^{8}\eta_4^{8}}{\eta_2^{8}}.
$$
\item[(b)]
The differentiation formula
$$
q\frac{\ud X}{\ud q} = Z\,X\,B
$$
holds, where $B=\sqrt{1-64X}$.
\item[(c)]
The differential equation 
$$
\uD^2Z-\frac{1}{2Z}\left(\uD Z\right)^2 = HZ
$$
holds, where 
$$
\uD = B\,X\, \frac{\ud}{\ud X} \quad\text{and}\quad H=8X,
$$
and $B$ is as above.
\item[(d)]
In a neighbourhood of $q=0$, the following power series expansion holds:
$$
Z=\sum_{n=0}^\infty {2n \choose n}^3 X^n.
$$
\end{enumerate}
\end{theorem}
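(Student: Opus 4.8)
The plan is to rewrite every quantity in terms of the Jacobi theta null values $\theta_2,\theta_3,\theta_4$ and then to read off the four assertions from Jacobi's quartic identity, the classical differentiation formula for the modulus, and Theorem~\ref{interesting}. The eta dictionary $\theta_2=2\eta_4^2/\eta_2$, $\theta_3=\eta_2^5/(\eta_1^2\eta_4^2)$, $\theta_4=\eta_1^2/\eta_2$ gives $\theta_3^4=\eta_2^{20}/(\eta_1^8\eta_4^8)$, $\theta_2^4=16\eta_4^8/\eta_2^4$ and $\theta_4^4=\eta_1^8/\eta_2^4$. For part (a) I would first record the purely algebraic identity $1/w+32+256w=(\eta_1^8+16\eta_4^8)^2/(\eta_1^8\eta_4^8)$, so that the claimed formula for $X$ is equivalent to $\eta_2^{24}=\eta_1^{16}\eta_4^8+16\eta_1^8\eta_4^{16}$; dividing this by $\eta_1^8\eta_4^8\eta_2^4$ turns it into precisely Jacobi's identity $\theta_3^4=\theta_2^4+\theta_4^4$. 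The stated formulas for $Z$ and $Z^2X$ then follow by substituting $X^{5/12}=\eta_1^{10}\eta_4^{10}/\eta_2^{20}$ and simplifying, and along the way one records the clean theta forms $Z=\theta_3^4$, $X=\theta_2^4\theta_4^4/(16\theta_3^8)$ and, using Jacobi once more, $B=\sqrt{1-64X}=(\theta_4^4-\theta_2^4)/\theta_3^4$.

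For part (b) I would set $x=\theta_2^4/\theta_3^4$, the square of the elliptic modulus, so that $X=\tfrac1{16}x(1-x)$ and $B=1-2x$. The differentiation formula then reduces to the standard theta/elliptic derivative identity $q\,\ud x/\ud q=x(1-x)\theta_3^4$ (equivalently $q\,\ud(k^2)/\ud q=k^2k'^2(2K/\pi)^2$), since $q\,\ud X/\ud q=\tfrac1{16}(1-2x)\,q\,\ud x/\ud q=\theta_3^4\cdot\tfrac1{16}x(1-x)\cdot(1-2x)=ZXB$.

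Part (c) is the crux, and the cleanest route is the substitution $u=\sqrt{Z}=\theta_3^2$. Writing $Z=u^2$ one checks directly that $\uD^2Z-\tfrac1{2Z}(\uD Z)^2=2u\,\uD^2u$, so (c) is equivalent to the linear equation $\uD^2u=\tfrac12Hu=4Xu$. Using part (b), the operator collapses to $\uD=BX\,\ud/\ud X=16X\,\ud/\ud x=x(1-x)\,\ud/\ud x$, and after dividing by $x(1-x)$ the equation $\uD^2u=4Xu$ becomes the Legendre/hypergeometric equation $x(1-x)u''+(1-2x)u'-\tfrac14u=0$, which is satisfied by $u=\theta_3^2=2K/\pi={}_2F_1(\tfrac12,\tfrac12;1;x)$. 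I expect the main obstacle to be exactly this input: establishing $q\,\ud x/\ud q=x(1-x)\theta_3^4$ and the attendant second-order differential equation purely from the $q$-expansions and the heat-equation and quasi-periodicity properties of the theta functions, so that neither the elliptic-integral inversion nor Clausen's identity is invoked. Everything else is either an algebraic identity or a direct appeal to Theorem~\ref{interesting}.

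Finally, for part (d) I would note that $X=q+O(q^2)$ and $Z=1+O(q)$ as power series in $q$, so that $q\mapsto X$ is invertible near $q=0$ and $Z$ is analytic in $X$ with $Z(0)=1\neq0$. By part (c), $Z$ satisfies~\eqref{dZdXa} with $G=B^2=1-64X$ (hence $k=1$, $g_1=-64$) and $H=8X$ (hence $h_1=8$). Theorem~\ref{interesting} then yields $(n+1)^3T(n+1)-64(n+1)n(n+\tfrac12)T(n)=16(n+\tfrac12)T(n)$, which simplifies to $(n+1)^3T(n+1)=8(2n+1)^3T(n)$; the unique solution with $T(0)=Z_0=1$ is $T(n)={2n\choose n}^3$, because ${2n+2\choose n+1}^3={2n\choose n}^3\,8(2n+1)^3/(n+1)^3$. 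This gives $Z=\sum_{n\ge0}{2n\choose n}^3X^n$, completing (d).
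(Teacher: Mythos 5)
Your reductions are all algebraically correct, and parts (a) and (d) essentially coincide with the paper's argument: (a) comes down to Jacobi's quartic identity $\theta_3^4=\theta_2^4+\theta_4^4$ in both treatments, and (d) is the same application of Theorem~\ref{interesting} with $g_1=-64$, $h_1=8$ yielding $(n+1)^3T(n+1)=8(2n+1)^3T(n)$. For (b) and (c) your route is genuinely different and in one respect nicer than the paper's: the observation that $Z=u^2$ turns the left side of~\eqref{dZdXa} into $2u\,\uD^2u$, so that (c) linearizes to $\uD^2u=4Xu$, and the computation $\uD=BX\,\ud/\ud X=x(1-x)\,\ud/\ud x$ in the modulus variable $x=\theta_2^4/\theta_3^4$ then exposes the Legendre equation directly. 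The paper instead works with $Z$ itself and grinds through $Q(q),Q(q^4)$ via the eight-squares and eight-triangular-numbers formulas.

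The problem is that the two inputs your reduction lands on --- $q\,\ud x/\ud q=x(1-x)\theta_3^4$ and the fact that $u=\theta_3^2$ solves $x(1-x)u''+(1-2x)u'-\tfrac14u=0$ --- are asserted, not proved, and the only justification you offer for the second is $\theta_3^2={}_2F_1(\tfrac12,\tfrac12;1;x)$, which \emph{is} the inversion formula the proof is explicitly designed to avoid. You flag this yourself as ``the main obstacle,'' but flagging it is not overcoming it: essentially the entire analytic content of the paper's proof of (b) and (c) lives in exactly this spot. Concretely, the paper fills the gap as follows. For (b), logarithmic differentiation of the eta-product gives $q\frac{\ud}{\ud q}\log w=\frac{4P(q^4)-P(q)}{3}$, and the sum-of-four-squares formula identifies this Lambert-series combination with $\phi^4(q)=Z$; the chain rule applied to $1/X=1/w+32+256w$ then produces $q\,\ud X/\ud q=ZXB$ with no elliptic-function input. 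For (c), differentiating $Z^{12}=\eta_1^{24}\eta_4^{24}/X^5$ gives $12\uD Z+5BZ=4P(q^4)+P(q)$; a second application of $\uD$ together with Ramanujan's relation $q\,\ud P/\ud q=(P^2-Q)/12$ brings in $16Q(q^4)+Q(q)$, which is evaluated as $17Z^2-32Z^2X$ by the eight-squares and eight-triangular-numbers formulas plus Jacobi's identity, and the resulting quadratic relations close up to give $H=8X$. If you want your cleaner $u=\sqrt{Z}$ route to meet the paper's standard, you would need to supply analogues of these steps --- e.g., derive $q\,\ud x/\ud q=(1-x)x\theta_3^4$ from $q\frac{\ud}{\ud q}\log x=\frac{8}{3}P(q^4)-2P(q^2)+\frac13P(q)$ together with the identification of that combination with $\theta_4^4$, and then obtain the second-order equation for $\theta_3^2$ by a further differentiation using the $P,Q$ apparatus rather than by quoting the hypergeometric representation.
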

\begin{proof}[Proof of theorem](a)
By the definition of $X$ we have
$$
\frac{1}{X} 
=  \left(\frac{1}{w^{1/2}}+16w^{1/2}\right)^{2}
=\left(\frac{\eta_1^4}{\eta_4^4}+16\frac{\eta_4^4}{\eta_1^4}\right)^{2}
=\frac{\eta_2^{8}}{\eta_1^{8}\eta_4^{8}}\left(\frac{\eta_1^8}{\eta_2^4}+16\frac{\eta_4^8}{\eta_2^4}\right)^{2}.
$$
The terms in parentheses can be expressed in terms of theta functions, e.g., see~\cite[Th. 3.5]{cooperbook}, to yield
$$
\frac{1}{X}  = \frac{\eta_2^{8}}{\eta_1^{8}\eta_4^{8}} \left(\phi^4(-q)+16q\psi^4(q)\right)^2,
$$
where
$$
\phi(q) = \sum_{j=-\infty}^\infty q^{j^2}\quad\text{and}\quad \psi(q) = \sum_{j=0}^\infty q^{j(j+1)/2}.
$$
By an identity of Jacobi, e.g.,~\cite[Th. 3.7]{cooperbook} this simplifies to
$$ 
\frac{1}{X}  = \frac{\eta_2^{8}}{\eta_1^{8}\eta_4^{8}} \times \phi^8(q),
$$
and on applying~\cite[Th. 3.5]{cooperbook} again to convert the theta function to an eta-quotient we obtain
$$ 
\frac{1}{X}  = \frac{\eta_2^{8}}{\eta_1^{8}\eta_4^{8}} \times \left(\frac{\eta_2^5}{\eta_1^2\eta_4^2}\right)^8 = 
\frac{\eta_2^{48}}{\eta_1^{24}\eta_4^{24}}.
$$
The result for $X$ follows on taking reciprocals, while expressions for $Z$ and $Z^2X$ are obtained by
trivially by using the result for $X$ in the definition of $Z$. This completes the proof of~(a).

Now will prove (b).
The strategy will be to first compute the derivative of~$w$ and then use the chain rule to calculate the derivative of~$X$.
We have
\begin{align}
\label{Z12}
q\frac{\ud}{\ud q} \log w 
&= q\frac{\ud}{\ud q} \log \left(q\prod_{j=1}^\infty \frac{(1-q^{4j})^8}{(1-q^j)^8}\right) \\
&= 1-32\sum_{j=1}^\infty \frac{jq^{4j}}{1-q^{4j}} + 8\sum_{j=1}^\infty \frac{jq^j}{1-q^j} \nonumber \\
&= \frac{4P(q^4)-P(q)}{3}, \nonumber
\end{align}
where
$$
P(q) = 1-24\sum_{j=1}^\infty \frac{jq^j}{1-q^j}.
$$
We now apply the sum of four squares formula~\cite[Th.~3.26]{cooperbook} followed by \cite[Th.~3.5]{cooperbook} to convert
the resulting theta function to a product, to obtain
$$
q\frac{\ud}{\ud q} \log w  = \phi^4(q) =\frac{\eta_2^{20}}{\eta_1^8\eta_4^8}.
$$
By Part (a) this simplifies further to
\begin{equation}
\label{dw}
q\frac{\ud}{\ud q} \log w = Z.
\end{equation}
For future reference, we note that~\eqref{Z12} and~\eqref{dw} imply
\begin{equation}
\label{ZX3}
Z=\frac{4P(q^4)-P(q)}{3}.
\end{equation}
Next, applying the operator $q\frac{\ud}{\ud q}$ to the relation
$$
\frac{1}{X} = \frac{1}{w} + 32 + 256w
$$
we obtain
$$
-\frac{1}{X^2} \, q\frac{\ud X}{\ud q} = \left(\frac{-1}{w^2}+256\right) q\frac{\ud w}{\ud q},
$$
and so
$$
q\frac{\ud X}{\ud q} = X^2 \left(\frac{1}{w} - 256 w\right) q\frac{\ud }{\ud q} \log w.
$$
By \eqref{dw} and by further calculation, this becomes
\begin{align*}
q\frac{\ud X}{\ud q}
&= X^2 \times \sqrt{\left(\frac{1}{w} - 256 w\right)^2} \times Z \\
&= ZX^2 \times \sqrt{\left(\frac{1}{w} + 256 w\right)^2-1024}  \\
&=ZX^2 \times \sqrt{\left(\frac{1}{X} -32 \right)^2-1024}   & \text{by the definition of $X$}\\
&=ZX^2 \times \sqrt{\frac{1}{X^2} -\frac{64}{X}}   \\
&= ZXB.
\end{align*}

Now we prove (c). First, by the chain rule and part (b) of the theorem,
the differential operator $\uD$ is related to differentiation with respect to $q$ by
\begin{equation}
\label{diffop}
\uD = BX \frac{\ud}{\ud X} = \left. BX\,q \frac{\ud}{\ud q} \right/ \left(q \frac{\ud X}{\ud q}\right) = \frac{1}{Z}\, q \frac{\ud}{\ud q}.
\end{equation}
Next, by the definition of $Z$ we have
$$
Z^{12} = \frac{\eta_1^{24}\eta_4^{24}}{X^5}.
$$
Take the logarithm and apply $q\frac{\ud}{\ud q}$ to obtain
$$
\frac{12}{Z}\, q\frac{\ud Z}{\ud q} = P(q) +4P(q^4) - 5q\frac{\ud}{\ud q} \log X.
$$
Now use the result of part (b) and~\eqref{diffop} to deduce
\begin{equation}
\label{DZ}
12 DZ +5BZ = 4P(q^4)+P(q).
\end{equation}
Next, apply the differential operator $\uD$ to~\eqref{DZ}
and make use of~\eqref{diffop} to obtain
$$
12\uD^2Z + 5\uD(BZ) = \frac{1}{Z} \, q\frac{\ud}{\ud q} \left(4P(q^4)+P(q)\right).
$$
It remains to calculate the derivative on the right hand side. On using the formula~\cite[p. 29]{cooperbook}
$$
q\frac{\ud}{\ud q}P(q) = \frac{P^2(q)-Q(q)}{12}\quad\text{where}\quad Q(q) = 1+240\sum_{j=1}^\infty \frac{j^3q^j}{1-q^j}
$$
we obtain
\begin{equation}
\label{tt}
12\uD^2Z + 5\uD(BZ) = \frac{1}{12Z} \left(16P^2(q^4)-16Q(q^4)+P^2(q)-Q(q)\right).
\end{equation}
To handle the $16Q(q^4)$ and $Q(q)$ terms, use the sum of eight squares and sum of eight
triangular numbers formulas, e.g., see \cite[Th.~3.39]{cooperbook}, to deduce
\begin{align*}
16Q(q^4)+Q(q) &= 16\phi^8(q)+\phi^8(-q)+256q^2\psi^8(q^2),
\intertext{which we rewrite as}
16Q(q^4)+Q(q) &= 16\phi^8(q)+\left(\phi^4(-q)+16q\psi^4(q^2)\right)^2-32q\phi^4(-q)\psi^4(q^2).
\intertext{By Jacobi's identity~\cite[Th. 3.7]{cooperbook} this simplifies to}
16Q(q^4)+Q(q)&= 17\phi^8(q) -32q\phi^4(-q)\psi^4(q^2).
\intertext{Now use \cite[Th. 3.5]{cooperbook} to convert the theta functions to products, then use part~(a) of the theorem to deduce}
16Q(q^4)+Q(q)&=17 \frac{\eta_2^{40}}{\eta_1^{16}\eta_4^{16}} - 32\frac{\eta_1^8\eta_4^8}{\eta_2^8} 
= 17Z^2 - 32Z^2X.
\end{align*}
Hence,~\eqref{tt} simplifies to
\begin{equation}
\label{ttt}
12\uD^2Z + 5\uD(BZ) = \frac{1}{12Z} \left(16P^2(q^4)+P^2(q)+Z^2(32X-17)\right).
\end{equation}
In order to handle the $16P^2(q^4)$ and $P^2(q)$ terms, observe that
the formulas~\eqref{ZX3} and~\eqref{DZ} can be combined to give
$$
4P(q^4) = \frac12\left(12\uD Z +5BZ+3Z\right) \quad\text{and}\quad
P(q) = \frac12\left(12\uD Z +5BZ-3Z\right).
$$
Squaring and adding gives
\begin{align*}
16P^2(q^4)+P^2(q) &= \frac12(12\uD Z +5BZ)^2 + \frac92Z^2 \\
&= 72(\uD Z)^2+60BZ(\uD Z)+\frac{25}{2}B^2Z^2+\frac92Z^2,
\end{align*}
and substituting back in~\eqref{ttt} gives
\begin{equation}
\label{tttt}
12\uD^2Z + 4\uD(BZ) = \frac{1}{12Z} \left(72(\uD Z)^2+48BZ(\uD Z)+8B^2Z^2-8Z^2\right).
\end{equation}
Now divide by 12 and rearrange terms to obtain
\begin{align*}
\uD^2Z &= \frac{1}{2Z}(\uD Z)^2 +\frac{5}{12}B(\uD Z) - \frac{5}{12} \uD(BZ)+ \frac{Z}{288}(25B^2-25+64X) \\
&= \frac{1}{2Z}(\uD Z)^2 + 8XZ
\end{align*}
where the last step follows by simple calculation using the definitions of $B$ and~$\uD$.
This completes the proof of~(c).

Finally we prove (d). By the result of (c) and application of Theorem~\ref{interesting} with $B^2=1-64X$ and $H=8X$
it follows that the coefficients $T(n)$ in the expansion $Z=\sum_{n=0}^\infty T(n)X^n$
satisfy the two-term recurrence relation
$$
(n+1)^3T(n+1)+(n+1)(-64)\left(n+\mbox{$\frac12$}\right)nT(n) = 16\left(n+\mbox{$\frac12$}\right)T(n)
$$
which simplifies to
$$
(n+1)^3T(n+1)=8(2n+1)^3T(n).
$$
On solving the recurrence and noting that $T(0)=1$ because $Z=1$ and $X=0$ when $q=0$, we deduce that
$$
T(n) = {2n \choose n}^3.
$$
This completes the proof of Theorem~\ref{T4} and establishes the modular parameterisation
in the formula~\eqref{rj} that was used by Ramanujan.
\end{proof}

To give another example in explicit detail, here is the result for level~$10$.
\begin{theorem}
\label{T10}
Let $w$, $X$ and $Z$ be defined by
$$
w=\frac{\eta_2^{4}\eta_{10}^{4}}{\eta_1^{4}\eta_5^{4}}, \quad
\frac{1}{X} = \frac{1}{w}+8+16w\quad\text{and}\quad Z=\frac{\eta_1\eta_2\eta_5\eta_{10}}{X^{3/4}}.
$$
Then:
\begin{enumerate}
\item[(a)]
The differentiation formula
$$
q\frac{\ud X}{\ud q} = Z\,X\,B
$$
holds, where $B = \sqrt{(1+4X)(1-16X)}$.
\item[(b)]
The differential equation 
$$
\uD^2Z-\frac{1}{2Z}\left(\uD Z\right)^2 = HZ
$$
holds, where 
$$
\uD = B\,X\, \frac{\ud}{\ud X} \quad\text{and}\quad H=2X(1+15X),
$$
and $B$ is as above.
\item[(c)]
The coefficients $T(n)$ in the expansion $Z=\sum_{n=0}^\infty T(n)X^n$ satisfy the three-term recurrence
relation
$$
(n+1)^3T(n+1) = 2(2n+1)(3n^2+3n+1)T(n)+4n(16n^2-1)T(n-1)
$$
and initial condition $T(0)=1$, the single initial condition being sufficient to start the recurrence.
Hence, in a neighbourhood of $q=0$, the following power series expansion holds:
$$
Z=\sum_{n=0}^\infty \left\{\sum_{j=0}^n {n \choose j}^4\right\} X^n.
$$
\end{enumerate}
\end{theorem}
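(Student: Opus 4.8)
The plan is to follow the strategy of the proof of Theorem~\ref{T4}, adapting each step from level~$4$ to level~$10$. The key preliminary algebraic observation is that the defining relation $\frac{1}{X}=\frac{1}{w}+8+16w$ is a perfect square, $\frac{1}{X}=\bigl(\frac{1}{\sqrt w}+4\sqrt w\bigr)^2$, so that $X=\dfrac{w}{(1+4w)^2}$, and consequently
$$
1-16X=\frac{(1-4w)^2}{(1+4w)^2}\quad\text{and}\quad 1+4X=\frac{1+12w+16w^2}{(1+4w)^2}.
$$
These factorisations isolate exactly where the two factors of $B=\sqrt{(1+4X)(1-16X)}$ come from. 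For part~(a), I would first compute $q\frac{\ud}{\ud q}\log w$ by logarithmic differentiation, using $q\frac{\ud}{\ud q}\log\eta_N=\frac{N}{24}P(q^N)$, to get
$$
q\frac{\ud}{\ud q}\log w=\tfrac16\bigl(-P(q)+2P(q^2)-5P(q^5)+10P(q^{10})\bigr).
$$
Differentiating $X=w/(1+4w)^2$ by the chain rule gives $q\frac{\ud X}{\ud q}=\frac{1-4w}{(1+4w)^3}\,q\frac{\ud w}{\ud q}$, and after squaring and substituting the factorisations above, the desired formula $q\frac{\ud X}{\ud q}=ZXB$ reduces to the single weight-four identity $\bigl(q\frac{\ud}{\ud q}\log w\bigr)^2=Z^2(1+4X)$. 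I would prove this last relation by converting both sides to eta-quotients (or theta products) at level~$10$ and comparing; squaring is done precisely to remove the awkward factor $\sqrt{1+4X}$ that is not itself a rational function of the modular data.

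For part~(b) I would mirror the derivation of~\eqref{DZ} through~\eqref{tttt}. Taking the logarithm of $Z=\eta_1\eta_2\eta_5\eta_{10}X^{-3/4}$, applying $q\frac{\ud}{\ud q}$, and using part~(a) in the form $q\frac{\ud}{\ud q}\log X=ZB$ together with $\uD=\frac1Z q\frac{\ud}{\ud q}$ (which holds exactly as in~\eqref{diffop}), I obtain the weight-two relation
$$
24\,\uD Z+18\,BZ=P(q)+2P(q^2)+5P(q^5)+10P(q^{10}).
$$
Applying $\uD$ once more and invoking Ramanujan's formula $q\frac{\ud}{\ud q}P(q^N)=\frac{N}{12}\bigl(P^2(q^N)-Q(q^N)\bigr)$ produces a combination of the $P^2(q^N)$ and the $Q(q^N)$. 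The squared-Eisenstein terms are eliminated using the available linear relations among the $P(q^N)$ (the two displayed combinations above, supplemented by enough further level-$10$ relations to solve for the individual series), while the combination $Q(q)+4Q(q^2)+25Q(q^5)+100Q(q^{10})$ is rewritten in terms of $Z^2$, $Z^2X$ and $Z^2X^2$ via level-$10$ analogues of the sum-of-squares identities used in Theorem~\ref{T4}. Collecting and simplifying should yield $\uD^2Z=\frac{1}{2Z}(\uD Z)^2+2X(1+15X)Z$, which is~(b).

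Part~(c) is then immediate from Theorem~\ref{interesting}. With $B^2=1-12X-64X^2$ and $H=2X+30X^2$, so that $k=2$, $g_1=-12$, $g_2=-64$, $h_1=2$, $h_2=30$, the recurrence~\eqref{generalT} becomes, after using $12n(n+1)(n+\frac12)+2(2n+1)=2(2n+1)(3n^2+3n+1)$ and $64n(n+1)(n-1)+60n=4n(16n^2-1)$, exactly the stated three-term relation. Since the coefficient $4n(16n^2-1)$ of $T(n-1)$ vanishes at $n=0$, the single value $T(0)=1$ (forced by $Z(0)=1$ and $X(0)=0$) is enough to start the recurrence, so the sequence is self-starting. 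Finally, one verifies by creative telescoping that $\sum_{j=0}^n\binom nj^4$ satisfies the same recurrence and the same initial condition; uniqueness then identifies it with $T(n)$, giving both the closed form and the power series expansion.

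The main obstacle is the level-$10$ modular content in parts~(a) and~(b): the weight-four identity $\bigl(q\frac{\ud}{\ud q}\log w\bigr)^2=Z^2(1+4X)$ and, above all, the expression of $Q(q)+4Q(q^2)+25Q(q^5)+100Q(q^{10})$ and the relevant squared-Eisenstein combination in terms of $Z^2$, $Z^2X$ and $Z^2X^2$. Unlike the level-$4$ case, where only $P(q)$ and $P(q^4)$ occur and two linear relations suffice to separate them, level~$10$ has four divisors and hence four Eisenstein series, so the bookkeeping is heavier and one must locate (or verify by $q$-expansion up to a Sturm bound) enough independent level-$10$ theta identities to carry out the elimination.
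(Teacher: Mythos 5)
Your proposal is correct, but it takes a genuinely different route from the paper: the paper's proof of Theorem~\ref{T10} is a citation (parts (a) and (c) to Theorems 10.15, 10.17, 10.19 and 10.24 of \cite{cooperbook}, part (b) to Theorem 10.20 of \cite{cooperbook} after a change of variable, with \cite{ctyz} for an alternative proof of (c)), whereas you transplant to level $10$ the self-contained theta-function argument that the paper carries out only for the level-$4$ prototype, Theorem~\ref{T4}. Your algebra checks out: $X=w/(1+4w)^2$, the factorisations of $1+4X$ and $1-16X$, the reduction of (a) to $\bigl(q\frac{\ud}{\ud q}\log w\bigr)^2=Z^2(1+4X)$, the relation $24\,\uD Z+18BZ=P(q)+2P(q^2)+5P(q^5)+10P(q^{10})$, and the simplification of~\eqref{generalT} with $g_1=-12$, $g_2=-64$, $h_1=2$, $h_2=30$ are all exactly right. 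The one point to make explicit is the elimination of $P^2(q)+4P^2(q^2)+25P^2(q^5)+100P^2(q^{10})$: at level $4$ the two relations \eqref{ZX3} and \eqref{DZ} determine both Eisenstein series, but at level $10$ your two displayed combinations span only half of the four-dimensional space, so two further weight-two identities (e.g.\ for $2P(q^2)-P(q)$ and $5P(q^5)-P(q)$ as $Z$ times rational functions of $X$) must be supplied before the quasimodular parts cancel; you acknowledge this, and such identities are standard on $\Gamma_0(10)$ and checkable to a Sturm bound. Your route buys a mechanically verifiable, self-contained proof; the paper's buys brevity by deferring all modular content to the reference.
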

\begin{proof}
Parts (a) and (c) follow from Theorems~10.15, 10.17, 10.19 and~10.24 in~\cite{cooperbook}.
Part~(b) follows from Theorem~10.20 in~\cite{cooperbook} by change of variable.
A different proof of~(c) has been given in~\cite{ctyz}. 
\end{proof} 
It may be remarked that the three-term recurrence relation in Part~(c) of Theorem~\ref{T10}
follows immediately from the result of Part~(b) by application of Theorem~\ref{interesting}.
Moreover, following the comments after~\eqref{a1},
a computer algebra system can be used to show that the sequence defined by
$$
T(n) = \sum_{j=0}^n {n \choose j}^4
$$
is a solution to the recurrence relation.

The examples in Theorems~\ref{T4} and \ref{T10} have some key aspects in common:
\begin{itemize}
\item[(i)]
They start with a modular function $X$ and a weight two modular form $Z$. The weight two modular form
is normally defined by
$$
Z=\left(\prod_{d|\ell} \eta_d^{e_d}\right)\times \frac{1}{X^{m}}
$$
where $d$ ranges over the divisors of the level~$\ell$, the exponents $e_d$ are integers with
$\sum_{d|\ell} e_d = 4$, and
$$
m = \frac{1}{24} \sum_{d|\ell} de_d.
$$
\item[(ii)] There is a polynomial $G=G(X)$ with $G(0)=1$
for which the differentiation formula
\begin{equation}
\label{qdXdq1}
q\frac{\ud X}{\ud q} =BXZ
\end{equation}
holds with $B=\sqrt{G}$.
\item[(iii)] There is a polynomial $H=H(X)$ with $H(0)=0$ for which the differential equation
\begin{equation}
\label{D2Z}
\uD^2Z - \frac{1}{2Z}\left(\uD Z\right)^2=HZ
\end{equation}
holds, where $\uD$ is the differential operator 
$$
\uD = BX \frac{\ud}{\ud X}
$$
and $B$ is as for (ii).
\item[(iv)] If $k$ is the maximum of the degrees of the polynomials $G$ and $H$, then the coefficients~$T(n)$ in the
expansion $Z=\sum_{n=0}^\infty T(n)X^n$ satisfy a $(k+1)$-term recurrence relation given by~\eqref{generalT}. 
\end{itemize}
Further examples of functions $X$ and $Z$ satisfying (i)--(iv) are given in~Tables~\ref{1to35} and~\ref{1to35b} below.
The results of Theorems~\ref{T4} and~\ref{T10} appear in the tables as the entries corresponding to levels~4 and~10,
respectively. Tables~\ref{1to35} and~\ref{1to35b} extend~\cite[Tables~14.1 and~14.2]{cooperbook}. 
Explicit formulas for $T(n)$ in terms of sums of binomial coefficients, where such formulas are known, are
listed in Table~\ref{1to35}. Otherwise, by Theorem~\ref{interesting}, the number of
terms in the recurrence relation is one more than the maximum of the degrees
of~$B^2$ and $H$, and this is recorded in the last column of the table.

In principle it is possible
to extend Tables~\ref{1to35} and~\ref{1to35b} further to other subgroups of $\text{SL}_2(\mathbb{R})$ of genus zero as per the following result
which we quote exactly from Zagier's survey~\cite[Prop. 21]{zagierelliptic}. 
\begin{theorem}
\label{TZ}
Let $f(z)$ be a (holomorphic or meromorphic) modular form of positive weight~$k$
on some group $\Gamma$ and $t(z)$ a modular function with respect to~$\Gamma$.
Express $f(z)$ (locally) as $\oldphi(t(z))$. Then the function $\oldphi(t)$ satisfies a
linear differential equation of order~$k + 1$ with algebraic coefficients,
or with polynomial coefficients if~$\Gamma \setminus \mathfrak{H}$ has genus $0$ and $t(z)$ generates
the field of modular functions on~$\Gamma$.
\end{theorem}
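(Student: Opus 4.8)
The plan is to reduce the statement to the classical fact that the $k$-th symmetric power of a second-order linear differential equation has order $k+1$, by exhibiting $\oldphi$ as a rational (respectively algebraic) multiple of a single solution of such a symmetric power. The first, elementary, step is to differentiate the invariance $t(\gamma z)=t(z)$ for $\gamma=\left(\begin{smallmatrix}a&b\\c&d\end{smallmatrix}\right)\in\Gamma$: this shows that $t'=\ud t/\ud z$ transforms with automorphy factor $(cz+d)^2$, so $t'$ is a meromorphic modular form of weight~$2$ and $(t')^{1/2}$ is one of weight~$1$. Throughout, the sought differential equation is taken with respect to $\ud/\ud t=(t')^{-1}\,\ud/\ud z$.

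Next I would construct the \emph{uniformising} second-order equation. Let $z=z(t)$ denote the (multivalued) inverse of $t$. The crucial observation is that the Schwarzian derivative $\{z,t\}$ is single-valued as a function of~$t$: under the monodromy $z\mapsto\gamma z$, the chain rule for the Schwarzian together with $\{\gamma z,z\}=0$ (valid because $\gamma$ acts as a Möbius transformation) gives $\{\gamma z,t\}=\{z,t\}$. Hence $Q:=\tfrac12\{z,t\}$ descends to a meromorphic function on the compactification $\mathcal X$ of $\Gamma\setminus\mathfrak H$, and so lies in the function field $\mathbb C(\mathcal X)$; this is rational in~$t$ when the genus is~$0$ and $t$ generates $\mathbb C(\mathcal X)$, and algebraic over $\mathbb C(t)$ in general. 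The relation $\{z,t\}=2Q$ is equivalent to $z=y_1/y_2$ for a basis $y_1,y_2$ of solutions of $y''+Q\,y=0$ (derivatives in~$t$), and constancy of the Wronskian then gives $y_2^{\,2}\propto t'$, so that $y_2\propto(t')^{1/2}$ is precisely the weight-one form above.

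With this in hand I would apply the symmetric-power construction. The $k$-th symmetric power of $y''+Qy=0$ is a linear differential equation of order $k+1$ whose coefficients are differential polynomials in~$Q$ (hence lie in the same field as~$Q$), and whose solution space is spanned by the monomials $y_1^{\,k-i}y_2^{\,i}$ for $0\le i\le k$; these are linearly independent because $1,z,\dots,z^{k}$ are, where $z=y_1/y_2$. Since $f$ has weight~$k$ and $y_2$ weight~$1$, the quotient $A:=f/y_2^{\,k}$ has weight~$0$, i.e.\ it is a modular function---rational in~$t$ in the genus-zero case and algebraic over $\mathbb C(t)$ otherwise. Writing $\oldphi=A\,y_2^{\,k}$ and conjugating the order-$(k+1)$ symmetric-power operator $L$ by $A$, that is, forming $M[\psi]:=A\,L[\psi/A]$, yields an operator $M$ of order exactly $k+1$ that annihilates $\oldphi$, with coefficients rational in those of~$L$, in~$A$, and in the derivatives of~$A$. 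This gives algebraic coefficients in general and, after clearing denominators, polynomial coefficients in the genus-zero case, as claimed.

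The symmetric-power and conjugation steps are routine; the real content lies in the uniformisation of the middle paragraph. The main obstacle is to control $Q=\tfrac12\{z,t\}$ globally: the monodromy argument only yields single-valuedness on $\Gamma\setminus\mathfrak H$, and one must still check that $\{z,t\}$ extends meromorphically across the cusps and the elliptic fixed points---exactly the points where $z(t)$ ramifies and acquires logarithmic or fractional local behaviour---before concluding that $Q\in\mathbb C(\mathcal X)$ and is algebraic over $\mathbb C(t)$. Carrying the multivaluedness of $z(t)$ and of the $y_i$ through the argument, while guaranteeing that the final operator $M$ is single-valued with the asserted coefficient type, is the chief technical point.
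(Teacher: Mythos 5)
The paper offers no proof of this statement: it is quoted verbatim from Zagier's survey \cite{zagierelliptic} and used there as a black box, so there is no internal argument to compare yours against. What you propose is, in substance, the standard proof of that proposition and essentially the one Zagier himself gives: differentiate the invariance of $t$ to get the weight-two form $t'$; observe that $Q=\tfrac12\{z,t\}$ is monodromy-invariant and descends to the modular curve, so that $(t')^{1/2}$ solves the uniformising equation $y''+Qy=0$; then take the $k$-th symmetric power conjugated by the weight-zero function $A=f/(t')^{k/2}$. Zagier packages this slightly differently --- he notes directly that the span of $z^rf$ for $0\le r\le k$ is stable under the monodromy $z\mapsto\gamma z$ (since $(az+b)^r(cz+d)^{k-r}f$ is a linear combination of the $z^jf$) and extracts the order-$(k+1)$ operator from Wronskian determinants, whose coefficients are then seen to be modular functions --- but since $z^rf=A\,y_1^ry_2^{k-r}$ the two constructions yield the same operator, so this is a repackaging rather than a genuinely different route. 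Your outline is correct, and the one point you flag yourself is indeed the only substantive verification remaining: one must check that $\{z,t\}$ (and hence $Q$ and the coefficients of the final operator) extends meromorphically across the cusps and elliptic fixed points, where $z(t)$ has logarithmic or fractional-power branching, before concluding that $Q$ lies in the function field of the compactified curve; this is standard but cannot be omitted.
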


The results in Theorems~\ref{T4} and~\ref{T10}, and indeed all of the entries in Tables~\ref{1to35} and~\ref{1to35b},
correspond to particular and explicit instances of Zagier's Theorem~\ref{TZ}
in the case~$k=2$. The entries in Tables~\ref{table11} and~\ref{table1}
are particular instances of the cases $k=1$ and $k=2$, respectively, of Zagier's Theorem~\ref{TZ}.

\begin{table}
\caption{Data for $\displaystyle{Z=\sum_{n=0}^\infty T(n)X^n}. \qquad$
The notation is: \\
$\displaystyle{\scalemath{.9}{Q=1+240\sum_{j=1}^\infty \frac{j^3 q^{j}}{1-q^j},}}\quad$ 
$\displaystyle{\scalemath{.9}{R=1-504\sum_{j=1}^\infty \frac{j^5q^{j}}{1-q^j},}}$ \\
$\displaystyle{\scalemath{.9}{\eta_N=q^{N/24}\prod_{j=1}^\infty (1-q^{jN}),}}\quad$ 
$\displaystyle{\scalemath{.9}{\theta_{a,b,c}=\sum_{j=-\infty}^\infty \sum_{k=-\infty}^\infty q^{aj^2+bjk+ck^2}}}$}
{\renewcommand{\arraystretch}{2.9}
\centering
{\resizebox{12.5cm}{!}{
\begin{tabular}{|c|c|c|c|c|}
\hline
level & $w$ & $X$ & $Z$ & $T(n)$ \\
\hline \hline
$1$ 
& $\frac{1}{432}\left(\frac{Q^{3/2}-R}{Q^{3/2}+R}\right)$ 
& $\displaystyle{\frac{w}{(1+432w)^2}}$ 
& $\displaystyle{\frac{\eta_1^{4}}{X^{1/6}}}$
& $\displaystyle{{6n \choose 3n}{3n \choose n}{2n \choose n}}$ 
 \\
\hline
$2$ 
& $\displaystyle{\frac{\eta_2^{24}}{\eta_1^{24}}}$
& $\displaystyle{\frac{w}{(1+64w)^2}}$ 
& $\displaystyle{\frac{\eta_1^{2}\eta_2^2}{X^{1/4}}}$ 
& $\displaystyle{{4n \choose 2n}{2n \choose n}^2}$ 
\\
\hline
$3$ 
& $\displaystyle{\frac{\eta_3^{12}}{\eta_1^{12}}}$
& $\displaystyle{\frac{w}{(1+27w)^2}}$ 
& $\displaystyle{\frac{\eta_1^{2}\eta_3^2}{X^{1/3}}}$ 
& $\displaystyle{{3n \choose n}{2n \choose n}^2}$ 
\\
\hline
$4$ 
& $\displaystyle{\frac{\eta_4^{8}}{\eta_1^{8}}}$
& $\displaystyle{\frac{w}{(1+16w)^2}}$ 
& $\displaystyle{\frac{\eta_1^2\eta_4^2}{X^{5/12}}}$ 
& $\displaystyle{{2n \choose n}^3}$ 
\\
\hline
$5$ 
& $\displaystyle{\frac{\eta_5^{6}}{\eta_1^{6}}}$
& $\displaystyle{\frac{w}{1+22w+125w^2}}$ 
& $\displaystyle{\frac{\eta_1^{2}\eta_5^2}{X^{1/2}}}$ 
& $\displaystyle{{2n \choose n}\sum_{j} {n \choose j}^{2}{n+j \choose j}}$ 
\\
\hline
$6$ (A)
& $\displaystyle{\frac{\eta_1^{12}\eta_6^{12}}{\eta_2^{12}\eta_3^{12}}}$
& $\displaystyle{\frac{w}{1-34w+w^2}}$ 
& $\displaystyle{\frac{\eta_1\eta_2\eta_3\eta_6}{X^{1/2}}}$ 
&  $\displaystyle{{2n \choose n}\sum_{j,\ell}(-8)^{n-j}{n \choose j}{j \choose \ell}^{3}}$ 
\\
\hline
$6$ (B)
& $\displaystyle{\frac{\eta_2^{6}\eta_6^{6}}{\eta_1^{6}\eta_3^{6}}}$
& $\displaystyle{\frac{w}{1+20w+64w^2}}$ 
& $\displaystyle{\frac{\eta_1\eta_2\eta_3\eta_6}{X^{1/2}}}$ 
& $\displaystyle{{2n \choose n}\sum_{j} {n \choose j}^{2}{2j \choose j}}$
\\
\hline
$6$ (C)
& $\displaystyle{\frac{\eta_3^{4}\eta_6^{4}}{\eta_1^{4}\eta_2^{4}}}$
& $\displaystyle{\frac{w}{1+14w+81w^2}}$ 
& $\displaystyle{\frac{\eta_1\eta_2\eta_3\eta_6}{X^{1/2}}}$ 
& $\displaystyle{{2n \choose n}\sum_{j} {n \choose j}^{3}}$ 
\\
\hline
$7$ 
& $\displaystyle{\frac{\eta_7^{4}}{\eta_1^{4}}}$
& $\displaystyle{\frac{w}{1+13w+49w^2}}$ 
& $\displaystyle{\frac{\eta_1^{2}\eta_7^2}{X^{2/3}}}$ 
& $\displaystyle{\sum_{j} {n \choose j}^{2} {2j \choose n} {n+j \choose j} }$ 
\\
\hline
$8$ 
& $\displaystyle{\frac{\eta_1^{8}\eta_8^{8}}{\eta_2^{8}\eta_4^{8}}}$
& $\displaystyle{\frac{1}{1-24w+16w^2}}$ 
& $\displaystyle{\frac{\eta_2^{2}\eta_4^2}{X^{1/2}}}$ 
& $\;\;\displaystyle{{2n \choose n}(-1)^{n}\sum_j {n \choose j}{2j \choose j}{2n-2j \choose n-j}}\;\;$ 
\\
\hline
$9$ 
& $\displaystyle{\frac{\eta_1^{6}\eta_9^{6}}{\eta_3^{12}}}$
& $\displaystyle{\frac{w}{1-18w-27w^2}}$ 
& $\displaystyle{\frac{\eta_3^4}{X^{1/2}}}$ 
& $\displaystyle{{2n \choose n}{\sum_{j}} (-3)^{n-3j} {n \choose j}{n-j \choose j}{n-2j \choose j}}$ 
\\
\hline
$10$ 
& $\displaystyle{\frac{\eta_2^{4}\eta_{10}^{4}}{\eta_1^{4}\eta_5^{4}}}$
& $\displaystyle{\frac{w}{(1+4w)^2}}$ 
& $\displaystyle{\frac{\eta_1\eta_2\eta_5\eta_{10}}{X^{3/4}}}$ 
& $\displaystyle{\sum_j {n \choose j}^4}$ 
\\
\hline
$11$ 
& 
& $\displaystyle{\left(\frac{\eta_1\eta_{11}}{\theta_{1,1,3}}\right)^2}$ 
& $\displaystyle{\frac{\eta_1^2\eta_{11}^2}{X}}$ 
& 4-term recurrence relation
\\
\hline
$12$ 
& $\displaystyle{\frac{\eta_1^{4}\eta_{12}^{4}}{\eta_3^{4}\eta_4^{4}}}$
& $\displaystyle{\frac{w}{(1+w)^2}}$ 
& $\displaystyle{\frac{\eta_1\eta_3\eta_4\eta_{12}}{X^{5/6}}}$ 
& $\displaystyle{\sum_j {n \choose j}^2{2j \choose j}{2n-2j \choose n-j}}$ 
\\
\hline
\end{tabular}}}}
\vspace{2in}
\label{1to35}
\end{table}

\setcounter{table}{2}

\begin{table*}
\caption{Continued.}
{\renewcommand{\arraystretch}{2.9}
\centering
{\resizebox{12.5cm}{!}{
\begin{tabular}{|c|c|c|c|c|}
\hline
level & $w$ & $X$ & $Z$ & $T(n)$ \\
\hline \hline
$13$ 
& $\displaystyle{\frac{\eta_{13}^{2}}{\eta_1^{2}}}$
& $\displaystyle{\frac{w}{1+5w+13w^2}}$ 
& $\displaystyle{\frac{\eta_1^2\eta_{13}^2}{X^{7/6}}}$
& 4-term recurrence relation 
\\
\hline
$14$ (A)
& $\displaystyle{\frac{\eta_1^{4}\eta_{14}^{4}}{\eta_2^{4}\eta_7^{4}}}$
& $\displaystyle{\frac{w}{(1-w)^2}}$ 
& $\displaystyle{\frac{\eta_1\eta_2\eta_7\eta_{14}}{X}}$ 
& $\displaystyle{\sum_{j,k} {n+j \choose 2j+2k}{2j+2k \choose j+k}{2k \choose k}^2 {k \choose j}}$ 
\\
\hline
$14$ (B)
& $\displaystyle{\frac{\eta_1^{4}\eta_{14}^{4}}{\eta_2^{4}\eta_7^{4}}}$
& $\displaystyle{\frac{w}{(1+w)^2}}$ 
& $\displaystyle{\frac{\eta_1\eta_2\eta_7\eta_{14}}{X}}$ 
& 4-term recurrence relation
\\
\hline
$15$ (A)
& $\displaystyle{\frac{\eta_3^{2}\eta_{15}^{2}}{\eta_1^{2}\eta_5^{2}}}$
& $\displaystyle{\frac{w}{(1+3w)^2}}$ 
& $\displaystyle{\frac{\eta_1\eta_3\eta_5\eta_{15}}{X}}$ 
& 4-term recurrence relation
\\
\hline
$15$ (B)
& $\displaystyle{\frac{\eta_3^{2}\eta_{15}^{2}}{\eta_1^{2}\eta_5^{2}}}$
& $\displaystyle{\frac{w}{(1-3w)^2}}$ 
& $\displaystyle{\frac{\eta_1\eta_3\eta_5\eta_{15}}{X}}$ 
& 4-term recurrence relation
\\
\hline
$18$ 
& $\displaystyle{\frac{\eta_1^{2}\eta_2^{2}\eta_9^2\eta_{18}^2}{\eta_3^{4}\eta_6^{4}}}$
& $\displaystyle{\frac{w}{(1+3w)^2}}$ 
& $\displaystyle{\frac{\eta_3^{2}\eta_6^2}{X^{3/4}}}$ 
& $\displaystyle{\sum_j (-1)^j {n \choose j} {2j \choose j} {2n-2j \choose n-j} {2n-3j \choose n}}$ 
\\
\hline
$20$ 
& $\displaystyle{\frac{\eta_1^{2}\eta_{20}^{2}}{\eta_4^{2}\eta_5^{2}}}$
& $\displaystyle{\frac{w}{(1+w)^2}}$ 
& $\displaystyle{\frac{\eta_2^2\eta_{10}^2}{X}}$ 
& 4-term recurrence relation
\\
\hline
$21$ 
& $\displaystyle{\frac{\eta_1^{2}\eta_{21}^{2}}{\eta_3^{2}\eta_7^{2}}}$
& $\displaystyle{\frac{w}{(1-w)^2}}$ 
& $\displaystyle{\frac{\eta_1\eta_3\eta_7\eta_{21}}{X^{4/3}}}$ 
& 4-term recurrence relation
\\
\hline
$22$ 
& $\displaystyle{\frac{\eta_2^{2}\eta_{22}^{2}}{\eta_1^{2}\eta_{11}^{2}}}$
& $\displaystyle{\frac{w}{(1+2w)^2}}$ 
& $\displaystyle{\frac{\eta_1\eta_2\eta_{11}\eta_{22}}{X^{3/2}}}$ 
& 5-term recurrence relation
\\
\hline
$23$
& 
& $\displaystyle{\frac{2\eta_1\eta_{23}}{\theta_{1,1,6}+\theta_{2,1,3}}}$ 
& $\displaystyle{\frac{\eta_1^2\eta_{23}^2}{X^2}}$ 
& 7-term recurrence relation
\\
\hline
$24$ 
& $\displaystyle{\frac{\eta_1^{2}\eta_{3}^{2}\eta_8^{2}\eta_{24}^{2}}{\eta_2^{2}\eta_{4}^{2}\eta_6^{2}\eta_{12}^{2}}}$
& $\displaystyle{\frac{w}{1+4w^2}}$ 
& $\displaystyle{\frac{\eta_2\eta_4\eta_{6}\eta_{12}}{X}}$ 
& $\displaystyle{\sum_j {n \choose 2j}{2j \choose j}^2{2n-4j \choose n-2j}}$ 
\\
\hline
$33$ 
& $\displaystyle{\frac{\eta_3\eta_{33}}{\eta_1\eta_{11}}}$
& $\displaystyle{\frac{w}{1+w+3w^2}}$ 
& $\displaystyle{\frac{\eta_1\eta_3\eta_{11}\eta_{33}}{X^{2}}}$ 
& 6-term recurrence relation
\\
\hline
$35$ 
& $\displaystyle{\frac{\eta_1\eta_{35}}{\eta_5\eta_7}}$
& $\displaystyle{\frac{w}{1+w-w^2}}$ 
& $\displaystyle{\frac{\eta_1\eta_5\eta_7\eta_{35}}{X^{2}}}$ 
& 6-term recurrence relation
\\
\hline
\end{tabular}}}}
\vspace{2in}
\end{table*}

\begin{table}
\caption{Data for $B^2$ and $H$, where $X$ and $Z$ are as for Table~\ref{1to35}, $\displaystyle{B=\frac{q}{ZX} \frac{\ud X}{\ud q}},\quad$
$\displaystyle{D^2Z-\frac{1}{2Z}(DZ)^2=HZ,\quad}$ and $\;\displaystyle{D=BX \frac{\ud}{\ud X}}$.}
{\renewcommand{\arraystretch}{2.9}
\centering
{\resizebox{12.5cm}{!}{
\begin{tabular}{|c|c|c|l|}
\hline
level & $B^2$ & $H$ & References \\
\hline \hline
$1$ 
& $\displaystyle{1-1728X}$
& $\displaystyle{120X}$ 
& \cite{cooperbook}, \cite{ramanujan_pi}
 \\
\hline
$2$ 
& $\displaystyle{1-256X}$
& $\displaystyle{24X}$ 
& \cite{cooperbook}, \cite{ramanujan_pi}
\\
\hline
$3$ 
& $\displaystyle{1-108X}$
& $\displaystyle{12X}$ 
& \cite{cooperbook}, \cite{ramanujan_pi}
\\
\hline
$4$ 
& $\displaystyle{1-64X}$
& $\displaystyle{8X}$ 
& \cite{cooperbook}, \cite{ramanujan_pi}
\\
\hline
$5$ 
& $\displaystyle{1-44X-16X^2}$
& $\displaystyle{6X(1+X)}$ 
& \cite{ctyz}, \cite{cooperbook}
\\
\hline
$6$ (A)
& $\displaystyle{(1+32X)(1+36X)}$
& $\displaystyle{-12X(1+36X)}$ 
& \cite{ctyz}, \cite{cooperbook}
\\
\hline
$6$ (B)
& $\displaystyle{(1-4X)(1-36X)}$
& $\displaystyle{6X(1-9X)}$ 
& \cite{ctyz}, \cite{cooperbook}
\\
\hline
$6$ (C)
& $\displaystyle{(1+4X)(1-32X)}$
& $\displaystyle{4X(1+12X)}$ 
& \cite{ctyz}, \cite{cooperbook}
\\
\hline
$7$ 
& $\displaystyle{(1+X)(1-27X)}$
& $\displaystyle{4X(1+3X)}$ 
& \cite{sporadic}, \cite{cooperbook}
\\
\hline
$8$ 
& $\displaystyle{(1+16X)(1+32X)}$
& $\displaystyle{-8X(1+24X)} $
& \cite{ctyz}, \cite{cooperbook}
\\
\hline
$9$ 
& $\displaystyle{1+36X+432X^2}$
& $\displaystyle{-6X(1+27X)}$ 
& \cite{ctyz}, \cite{cooperbook}
\\
\hline
$10$ 
& $\displaystyle{(1+4X)(1-16X)}$
& $\displaystyle{2X(1+15X)}$ 
& \cite{ctyz}, \cite{cooper10a}, \cite{cooperbook}
\\
\hline
$11$ 
& $\quad\qquad\displaystyle{1-20X+56X^2-44X^3}\quad\qquad$
& $\qquad\qquad\displaystyle{4X(1-8X+11X^2)}\qquad\qquad$ 
& \cite{cooperbook}, \cite{ge}
\\
\hline
$12$ 
& $\displaystyle{(1-4X)(1-16X)}$
& $\displaystyle{4X(1-8X)}$ 
& \cite{cooperbook}, \cite{cooperye12}
\\
\hline
\end{tabular}}}}
\vspace{2in}
\label{1to35b}
\end{table}

\setcounter{table}{3}

\begin{table}
\caption{Continued}
{\renewcommand{\arraystretch}{2.9}
\centering
{\resizebox{12.5cm}{!}{
\begin{tabular}{|c|c|c|l|}
\hline
level & $B^2$ & $H$ & References \\
\hline \hline
$13$ 
& $\displaystyle{(1+X)(1-10X-27X^2)}$ 
& $\displaystyle{\frac{X}{8}(12+175X+231X^2)}$
& \cite{cooperye13}
\\
\hline
$14$ (A)
& $\displaystyle{(1+4X)(1-10X-7X^2)}$ 
& $\displaystyle{\frac{X}{2}(2+51X+56X^2)}$
& \cite{alchemy}
\\
\hline
$14$ (B)
& $\displaystyle{(1-4X)(1-18X+49X^2)}$ 
& $\displaystyle{\frac{X}{2}(10-141X+392X^2)}$
& \cite{cooperye}, \cite{guilleratranslation} $\qquad$
\\
\hline
$15$ (A)
& $\displaystyle{(1-12X)(1-2X+5X^2)}$ 
& $\displaystyle{\frac{3X}{2}(2-11X+40X^2)}$
& \cite{cooperye}
\\
\hline
$15$ (B)
& $\displaystyle{(1+12X)(1+22X+125X^2)}$ 
& $\displaystyle{-\frac{3X}{2}(2+25X)(3+40X)}$
& \cite{cooperNTDU}
\\
\hline
$18$ 
& $\displaystyle{(1-12X)(1-16X)}$ 
& $\displaystyle{6X(1-15X)}$
& \cite{sporadic}, \cite{cooperbook}
\\
\hline
$20$ 
& $\displaystyle{(1-4X)(1-12X+16X^2)}$ 
& $\displaystyle{4X(1-10X+18X^2)}$
& \cite{huber20}
\\
\hline
$21$ 
& $\displaystyle{(1+4X)(1-2X-27X^2)}$ 
& $\displaystyle{-\frac{X}{2}(2-47X-240X^2)}$
& \cite{anusha}
\\
\hline
$22$ 
& $\displaystyle{(1-8X)(1-4X^2+4X^3)}$ 
& $\displaystyle{2X(1-3X)(1+4X-10X^2)}$
& \cite{anusha}
\\
\hline
$23$
& $\displaystyle{(1-X^2+X^3)}(1-8X+3X^2-7X^3)$ 
& $\displaystyle{2X(1-X-X^2+12X^3-15X^4+14X^5)}$
& \cite{ge}
\\
\hline
$24$ 
& $\displaystyle{(1+4X)(1-4X)(1-8X)}$ 
& $\displaystyle{2X(1+5X-64X^2)}$
& \cite{alchemy}, \cite{yeEmail}
\\
\hline
$33$ 
& $\displaystyle{(1-2X-11X^2)(1+4X+8X^2+4X^3)}$ 
& $\displaystyle{-\frac{X}{2}(2-15X-152X^2-404X^3-264X^4)}$
& \cite{anusha}
\\
\hline
$35$ 
& $\displaystyle{(1-2X+5X^2)(1-8X+16X^2-28X^3)}$ 
& $\displaystyle{\frac{X}{2}(6-61X+296X^2-580X^3+840X^4)}$
& \cite{anusha}
\\
\hline
\end{tabular}}}}
\vspace{3in}
\end{table}
The entries for $w$ in Tables~\ref{table1} and \ref{1to35} are identical for the levels labelled 5, 6(A), 6(B), 6(C), 8 and 9.
Closer inspection of Tables~\ref{table1} and \ref{1to35} shows that for those levels, $X$ is related to $w$ by the change of variable
\begin{equation}
\label{Xcov}
X=\frac{w}{(1+\alpha w)^2 + 4\gamma w^2},
\end{equation}
where $(\alpha,\beta,\gamma)$ is the appropriate triple from the set~$S$ in~\eqref{Sdef}. Moreover, the corresponding
modular forms~$Z$ in Table~\ref{1to35} is related to the corresponding modular form~$y$ in Table~\ref{table1} by
\begin{equation}
\label{Zcov}
X^{1/2}Z = w^{1/2}y.
\end{equation}
Applying this change of variables to the formulas~\eqref{qdwdq} and~\eqref{Adef}
leads to the differentiation formula
$$
q\frac{\ud X}{\ud q} = ZXB
$$
and the differential equation
\begin{equation}
\label{Ztilde}
\tilde{\uD}^2Z-\frac{1}{2Z}\left(\tilde{\uD} Z\right)^2 = \tilde{H}Z,
\end{equation}
where
\begin{equation}
\label{Btilde}
B^2 = 1-4\alpha X - 16\gamma X^2,\quad \tilde{H} = 2X(\beta+3\gamma X)\quad\text{and}\quad \tilde{\uD} = BX\frac{\ud}{\ud X}.
\end{equation}
Application of Theorem~\ref{interesting}, with $B^2$ and $\tilde{H}$ as above, shows that the coefficients $T(n)$ in the
expansion $Z=\sum_{n=0}^\infty T(n)X^n$ satisfy the three term recurrence relation
\begin{equation}
\label{cubic3term}
(n+1)^3 T(n+1) = 2(2n+1)(\alpha n^2+\alpha n+\beta)T(n) + 4\gamma n(4n^2-1)T(n-1).
\end{equation}
But it is easy to check that if the coefficients $t(n)$ satisfy the quadratic three-term recurrence relation~\eqref{zr1},
then the coefficients ${2n \choose n} t(n)$ will satisfy the cubic three-term recurrence relation~\eqref{cubic3term}. 
It follows that for each of the six parameter sets $(\alpha,\beta,\gamma)\in S$ we have
$$
T(n) = {2n \choose n} t(n)
$$
as can be seen in Table~\ref{1to35} for the levels labelled 5, 6(A), 6(B), 6(C), 8 and 9.
The discussion above provides proof of the following result of
of H.~H.~Chan, Y.~Tanigawa, Y.~Yang and W.~Zudilin (see~\cite{ctyz} or \cite[p. 342]{cooperbook}):
\pagebreak[4]
\begin{theorem}
Let $\left\{t(n)\right\}$ be the sequence that
satisfies the initial condition
$t(0)=1$
and the three-term recurrence relation
$$
(n+1)^2t(n+1)=(\alpha n^2+\alpha n+\beta)t(n) + \gamma n^2t(n-1)
$$
for $n\geq 0$. Then in a neighborhood of $x=0$, the following identity holds\textup:
\begin{equation}
\label{clauex2}
\left(\sum_{n=0}^\infty t(n)x^{n}\right)^2
= \frac{1}{1+\gamma x^2} \sum_{n=0}^\infty {2n \choose n}t(n)
\left(\frac{x(1-\alpha x - \gamma x^2)}{(1+\gamma x^2)^2}\right)^{n}.
\end{equation}
\end{theorem}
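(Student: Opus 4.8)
The plan is to assemble pieces that the preceding discussion has already put in place; no new input beyond that discussion is needed, only the composition of the two changes of variables. Recall from~\eqref{wxyz1} that if $z=\sum_{n\geq0}t(n)x^n$ then $w=x/(1-\alpha x-\gamma x^2)$ and $y=(1-\alpha x-\gamma x^2)z^2$, and recall from~\eqref{Xcov} and~\eqref{Zcov} the second change of variables $X=w/\bigl((1+\alpha w)^2+4\gamma w^2\bigr)$ and $X^{1/2}Z=w^{1/2}y$. Writing $Z=\sum_{n\geq0}T(n)X^n$, the discussion leading to~\eqref{cubic3term}, together with the formal fact that ${2n \choose n}t(n)$ satisfies~\eqref{cubic3term} whenever $t(n)$ satisfies~\eqref{zr1}, gives $T(n)={2n \choose n}t(n)$. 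The whole proof then amounts to expressing $X$ and $Z$ back in terms of $x$ and $z$.

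First I would substitute $w=x/(1-\alpha x-\gamma x^2)$ into~\eqref{Xcov}. Writing $u=1-\alpha x-\gamma x^2$, one finds $1+\alpha w=(u+\alpha x)/u=(1-\gamma x^2)/u$, and the decisive algebraic simplification is the identity $(1-\gamma x^2)^2+4\gamma x^2=(1+\gamma x^2)^2$, which collapses the denominator $(1+\alpha w)^2+4\gamma w^2$ to the perfect square $(1+\gamma x^2)^2/u^2$. This produces $X=xu/(1+\gamma x^2)^2=x(1-\alpha x-\gamma x^2)/(1+\gamma x^2)^2$, exactly the argument appearing on the right-hand side of~\eqref{clauex2}.

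Next I would extract the prefactor. From $X^{1/2}Z=w^{1/2}y$ we have $Z=(w/X)^{1/2}y$, and the same computation gives $w/X=(1+\gamma x^2)^2/u^2$, hence $(w/X)^{1/2}=(1+\gamma x^2)/u$ for the branch that is analytic and equals $1$ at $x=0$. Since $y=uz^2$, the factors of $u$ cancel and $Z=(1+\gamma x^2)z^2$, so that $z^2=Z/(1+\gamma x^2)$. Substituting $Z=\sum_{n\geq0}{2n \choose n}t(n)X^n$ together with the explicit $X$ then yields~\eqref{clauex2} verbatim.

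I expect no genuine obstacle: the substance is the perfect-square simplification above, which is precisely what makes the composite change of variables rational and the prefactor clean. The only points requiring care are the choice of square-root branch in $(w/X)^{1/2}$, which is fixed by demanding $Z(0)=1$ in accordance with $Z=\sum T(n)X^n$ and $T(0)=1$, and the observation that the recurrence-matching $T(n)={2n \choose n}t(n)$ is purely formal and hence holds for arbitrary $(\alpha,\beta,\gamma)$, not merely for the six modular triples in $S$. The result is then an identity of formal power series valid in a neighbourhood of $x=0$.
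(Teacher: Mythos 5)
Your proposal is correct and follows essentially the same route as the paper: the paper's proof likewise composes the change of variables~\eqref{wxyz1} with~\eqref{Xcov} and~\eqref{Zcov} to obtain $Z=(1+\gamma x^2)z^2$ and $X=x(1-\alpha x-\gamma x^2)/(1+\gamma x^2)^2$, and then invokes Theorem~\ref{interesting} with the data~\eqref{Ztilde}--\eqref{Btilde} to identify $T(n)={2n\choose n}t(n)$. You merely make explicit the perfect-square simplification and the branch choice that the paper's two-line proof leaves implicit.
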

\begin{proof}
Note that the change of variables \eqref{wxyz1} along with~\eqref{Xcov} and~\eqref{Zcov}
can be combined to give the overall change of variable
$$
Z=(1+\gamma x^2)z^2\quad\text{and}\quad
X=\frac{x(1-\alpha x - \gamma x^2)}{(1+\gamma x^2)^2}.
$$
Then apply Theorem~\ref{interesting} for the data in~\eqref{Ztilde} and~\eqref{Btilde}.
\end{proof}

As already mentioned, the data in Tables~\ref{1to35} and~\ref{1to35b} for level 4 corresponds to
Ramanujan's example worked out in detail in Theorem~\ref{T4}. It is called the classical theory.
The data for levels 1, 2 and 3 correspond to what are called Ramanujan's theories of elliptic functions
to alternative bases, e.g., see~\cite{bhargava} or~\cite[Chapter 4]{cooperbook}. 

The recurrence relations for the sequences $T(n)$ in Table~\ref{1to35} for levels 7, 10 and 18 are
of the form
\begin{equation}
\label{scr}
(n+1)^3T(n+1) = (2n+1)(a n^2 + a n+b)T(n) +n(cn^2+d)T(n-1)
\end{equation}
for $(a,b,c,d) = (6,2,64,-4)$, $(13,4,27,-3)$ and $(14,6,-192,12)$, respectively.
The recurrence~\eqref{scr} is a generalisation of the recurrences~\eqref{rr3} and~\eqref{cubic3term}.
Precisely, the recurrences~\eqref{rr3} and~\eqref{cubic3term} correspond to the special cases
$d=0$ and $c=-4d$ respectively of~\eqref{rr3}, and
that is why the level 7, 10 and 18 sequences do not occur in Zagier's examples in Section~\ref{S:Zagier}.
The sequences $T(n)$ nevertheless have interesting number theoretic properties. For example, A.~Malik and
A.~Straub~\cite{malikstraub} showed that the sequences $T(n)$ in Table~\ref{1to35} corresponding to levels 7, 10 and 18, as well as
all of the sequences $t(n)$ and $s(n)$ in Tables~\ref{table11} and \ref{table1}, all
satisfy Lucas congruences of the form~\eqref{a4} for every prime~$p$.

All of the sequences $\left\{T(n)\right\}$ in Table~\ref{1to35} are integer valued, except for the sequence corresponding to
level~13 which will be discussed in detail in Section~\ref{Level13}. Before doing that, the level~11 theory will be
discussed in Section~\ref{Level11}.
In Sections~\ref{14special} and~\ref{154} a more detailed investigation is performed for the level~14 and level~15 theories.
This will result in new Ap{\'e}ry-like sequences that take values in~$\mathbb{Z}[\sqrt{2}]$ for level~14,
and values in $\mathbb{Z}[i]$ for level~15. The remaining sequences from Table~\ref{1to35} that satisfy four-term
recurrence relations are investigated for Lucas congruences in Section~\ref{level202124}.

%---------------------------------------------------

\section{The level $11$ theory}
\label{Level11}
In this section we analyse the data in Tables~\ref{1to35} and~\ref{1to35b} for level 11 in detail and make a conjecture.
According to those tables, Theorem~\ref{interesting} can be applied to the functions
$$
Z=\left(\sum_{j=-\infty}^\infty \sum_{k=-\infty}^\infty q^{j^2+jk+3k^2}\right)^2\quad\text{and}\quad
X=\frac{\eta_1^2\eta_{11}^2}{Z}
$$
with data given by
$$
B^2 = 1-20X+56X^2-44X^3\quad\text{and}\quad
H =4X(1-8X+11X^2)
$$
to deduce that the coefficients $T(n)$ in the expansion
$$
Z=\sum_{n=0}^\infty T(n)X^n
$$
satisfy the four-term recurrence relation
\begin{align*}
(n+1)^3T(n+1) &=2(2n+1)(5n^2+5n+2)T(n)  \\
&\qquad -8n(7n^2+1)T(n-1)\\ 
& \qquad\qquad +22n(2n-1)(n-1)T(n-2).
\end{align*}
The single initial condition $T(0)=1$ is enough to start the recurrence.
It follows from the $q$-expansions for~$X$
and~$Z$ that $T(n)$ is always an integer.  
The first few terms $\left\{T(n)\right\}_{n=0}^{10}$ are given by
$$
1, 4, 28, 268, 3004, 36784, 476476, 6418192, 88986172, 1261473136, 18200713168.
$$
Numerical computations suggest:
\begin{conjecture}
The sequence $\left\{T(n)\right\}$ satisfies a Lucas congruence for every prime~$p$.

\noindent
That is, for every prime $p$ and any positive integer $n$,
$$
T(n) \equiv T(n_0)T(n_1)\cdots T(n_r) \pmod p
$$
where $n=n_0+n_1p+\cdots +n_rp^r$ is the $p$-adic expansion of $n$ in base $p$.
\end{conjecture}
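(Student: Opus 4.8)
The plan is to reduce the Lucas congruence to its two-digit case and then to attack that case through the modular parameterisation. First I would record the elementary fact that the full congruence \eqref{a4} follows by induction on the number of base-$p$ digits from the single concatenation congruence
$$
T(pm+\ell) \equiv T(m)\,T(\ell) \pmod p, \qquad 0 \le \ell < p, \quad m \ge 0,
$$
since writing $n = \ell + pn'$ with $\ell = n_0$ gives $T(n) \equiv T(n_0)\,T(n')$, and the inductive hypothesis disposes of $T(n')$. Equivalently, setting $f(X) = \sum_{n\ge 0} T(n)X^n = Z$ and $f_{<p}(X) = \sum_{\ell=0}^{p-1} T(\ell)X^\ell$, the statement to prove is the functional congruence
$$
f(X) \equiv f_{<p}(X)\,f(X^p) \pmod p,
$$
whose coefficient of $X^{\ell+pm}$ (with $0 \le \ell < p$) is exactly the displayed two-digit relation, because $\ell' \equiv \ell \pmod p$ with $\ell',\ell < p$ forces $\ell' = \ell$.

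Next I would transport this to the variable $q$. Because $Z = \sum T(n) X^n$ is realised by the modular parameterisation with $Z = \theta_{1,1,3}^2$ and $X = \eta_1^2\eta_{11}^2/Z$, the identity $f(X(q)) = Z(q)$ lets me rewrite the target as a relation between $Z(q)$, $X(q)$ and their images under $q \mapsto q^p$. The essential tool is the Frobenius congruence $A(q)^p \equiv A(q^p) \pmod p$, valid for every power series $A$ with integer coefficients; applied to the explicit eta- and theta-products for $X$ and $Z$ it produces clean relations modulo~$p$. The arithmetic input I would exploit is that level $11$ is attached to the unique normalised newform of weight $2$ and level $11$ (the eigenform of the conductor-$11$ elliptic curve $X_0(11)$), so that the action of the Atkin--Lehner/Hecke operator $U_p$ on the relevant space is governed by an explicit eigenvalue $a_p$. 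The idea is to match the $U_p$-expansion of $Z$ against the right-hand side $f_{<p}(X)\,f(X^p)$ modulo $p$.

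The hard part will be the interplay between two Frobenius structures: the $X$-adic Frobenius $X \mapsto X^p$ appearing in $f(X^p)$, and the $q$-adic Frobenius $q \mapsto q^p$ under which the modular objects transform. The substitution $q \mapsto q^p$ does not send $X(q)$ to $X(q)^p$ exactly, so the crux is to show that, modulo $p$, the composite series $Z$ viewed as a function of $X$ nevertheless satisfies the required multiplicative splitting; this demands precise control of the mod-$p$ reductions of the Hauptmodul $X$ and of $Z$, and I expect it is exactly here that the restriction to Lucas congruences (rather than stronger Dwork-type congruences) becomes natural. I would also note why a purely recurrence-based induction stalls: at $n+1 = p(m+1)$ the leading coefficient $(n+1)^3$ vanishes modulo $p$, so the four-term recurrence degenerates into a constraint rather than a rule for crossing the block boundary, and propagating $T$ past that boundary appears to require lifting to modulo $p^2$ together with the integrality furnished by the $q$-expansions.

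Two alternative engines are worth keeping in reserve. One is to search for a Lucas-compatible binomial-sum representation of $T(n)$, after which the generalised Lucas theorem in the style of Malik and Straub~\cite{malikstraub} applies digit-by-digit; the other is to realise $T(n)$ as the constant term of the $n$-th power of a Laurent polynomial, whereupon Dwork-type congruences, which contain \eqref{a4} as their weakest instance, follow automatically. Both reduce the problem to producing the correct combinatorial model, which is presently unknown for level $11$ and is, I believe, the true source of difficulty: it is precisely the absence of such an explicit formula that prevented the direct arguments of Gessel~\cite{gessel} and Beukers~\cite{beukers} for the Ap\'ery numbers from transferring here, and why the statement remains a conjecture.
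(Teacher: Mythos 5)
The statement you were asked to prove is not proved in the paper at all: it is stated as a conjecture, supported only by numerical verification (all primes $p<1000$ and all $n<50000$), and the paper's sole suggested line of attack is the one you mention last, namely to find a binomial-sum formula for $T(n)$ and then invoke the methods of Malik and Straub~\cite{malikstraub}, a route the paper itself notes is blocked because no such formula is known. So there is no ``paper proof'' to compare against; the relevant comparison is whether your proposal actually closes the gap, and it does not. Your reduction of the full Lucas congruence to the two-digit case, equivalently to the generating-function congruence $f(X)\equiv f_{<p}(X)\,f(X^p)\pmod p$, is correct and standard, and your diagnosis of the obstruction --- that the substitution $q\mapsto q^p$ does not carry $X(q)$ to $X(q)^p$, so the $q$-adic and $X$-adic Frobenius structures do not align --- is accurate. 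But the central step, matching the mod-$p$ behaviour of $Z$ under $U_p$ against $f_{<p}(X)\,f(X^p)$, is precisely the conjecture restated in modular language, and you supply no mechanism for carrying it out. The elementary congruence $A(q)^p\equiv A(q^p)\pmod p$ gives relations among $X(q)^p$, $X(q^p)$, $Z(q)^p$, $Z(q^p)$, but nothing in the proposal converts these into the required multiplicative splitting of $f$.

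One technical point in your sketch is also off: $Z=\theta_{1,1,3}^2$ is not the normalised newform of weight $2$ and level $11$; that newform is $\eta_1^2\eta_{11}^2=XZ$. The form $Z$ lies in the two-dimensional space $M_2(\Gamma_0(11))$ and is a combination of the Eisenstein series and the cusp form, hence is not a Hecke eigenform, so there is no single eigenvalue $a_p$ governing $U_pZ$ as your argument presupposes. Your two ``reserve engines'' (a Lucas-compatible binomial sum, or a constant-term/Laurent-polynomial realisation feeding into Dwork-type congruences) are reasonable and are consistent with the paper's own remarks, but both reduce the problem to producing a combinatorial model that is currently unknown. In short, your proposal is a sensible survey of strategies whose honest conclusion --- that the statement remains a conjecture --- agrees with the paper, but it is not a proof, and no step of it advances beyond what the paper already records.
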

The conjecture has been checked for all primes $p<1000$ and all $n<50000$.
One way to approach the conjecture would be to find a formula for $T(n)$ as a sum of binomial coefficients
and then apply the methods of Malik and Straub~\cite{malikstraub}. At present, such a binomial sum is
unknown.

%---------------------------------------------------

\section{The level 13 theory}
\label{Level13}
The entries in~Tables~\ref{1to35} and~\ref{1to35b} for level~13 are slightly different from, and can be deduced from, the
results in~\cite{cooperye13}.
In that reference the underlying functions, which we shall denote by $X^\ast$ and $Z^\ast$, are defined by
\begin{equation}
\label{Xast}
X^\ast = \frac{w}{1+6w+13w^2} \quad\text{and}\quad Z^\ast = \frac1{12}\left(13P(q^{13})-P(q)\right) 
\end{equation}
where $w=\eta_{13}^2/\eta_1^2$.
By~\cite[Th. 8.1]{cooperye13} we can deduce the alternative formula
\begin{equation}
\label{Zast}
Z^\ast = \eta_1^2\eta_{13}^2\times \frac{(1-X^\ast)^{2/3}}{(X^\ast)^{7/6}}
\end{equation}
which resembles the other formulas in the column for~$Z$ in Table~\ref{1to35} apart from the factor of $(1-X^\ast)^{2/3}$.
Further, by~\cite[Theorems 6.3 and 7.2]{cooperye13} it follows that the differentiation formula~\eqref{qdXdq1} and the differential equation~\eqref{D2Z} hold with
\begin{equation}
\label{Hast}
\left(B^\ast\right)^2 = 1-12X^\ast-{16X^\ast}^2\quad\text{and}\quad
H^\ast = \frac{2X^\ast(1+5X^\ast-3{X^\ast}^2+3{X^\ast}^3)}{(1-X^\ast)^2}.
\end{equation}
Observe that $H^\ast$ is not a polynomial but is a rational function instead.
The coefficients~$T^\ast(n)$ in the expansion
$$
Z^\ast = \sum_{n=0}^\infty T^\ast(n) {X^\ast}^n
$$
are integers and were shown in~\cite{cooperye} to satisfy a six-term recurrence relation.
By contrast, the data corresponding to level~13 in Tables~\ref{1to35} and~\ref{1to35b} is for the functions
\begin{equation}
\label{13definitions}
X=\frac{w}{1+5w+13w^2}\quad\text{and}\quad Z=\frac{\eta_1^2\eta_{13}^2}{X^{7/6}}
\end{equation}
with $w=\eta_{13}^2/\eta_1^2$ as before. On comparing~\eqref{13definitions} with~\eqref{Xast} and~\eqref{Zast}
we readily obtain the change of variable formulas
$$
X=\frac{X^\ast}{1-X^\ast}\quad\text{and}\quad Z=(1-X^\ast)^{1/2}Z^\ast.
$$
On making the change of variable and applying~\eqref{Xast}--\eqref{Hast}, it follows that~$X$ and~$Z$ satisfy
the differentiation formula~\eqref{qdXdq1} and the differential equation~\eqref{D2Z} with
$B^2$ and $H$ given by
$$
B^2=(1+X)(1-10X-27X^2) \quad\text{and}\quad H=\frac{X}{8}\left(12+175X+231X^2\right).
$$
It then follows by Theorem~\ref{interesting} that the coefficients $T(n)$ in the power series expansion
$$
Z=\sum_{n=0}^\infty T(n)X^n
$$
satisfy the four-term recurrence relation
\begin{align*}
(n+1)^3T(n+1) &= \frac{3}{2}(2n + 1)(3n^2 + 3n + 1)T(n) \\
&\qquad+ \frac{n}{4}(148n^2 + 27)T(n-1) \\
&\qquad\qquad+ \frac38(6n - 5)(6n - 1)(2n - 1)T(n-2)
\end{align*}
with initial conditions $T(0)=1$, $T(-1)=0$, $T(-2)=0$.
However, the first few terms are given by
$$
Z= 1 + \frac32X + \frac{91}{8}X^2 + \frac{1287}{16}X^3 + \frac{86931}{128}X^4 + \frac{1566669}{256}X^5 + 
\frac{59494799}{1024}X^6 +\cdots,
$$
and the terms $T(n)$ are not integers. We claim that $4^nT(n)$ is always an integer.
To show this, first recall from~\cite[Lemma~5.4]{cooperye13} that
\begin{equation}
\label{lemma5point4}
\eta_1^{24} = U^6 \times \frac{w}{(1+5w+13w^2)^4}\quad\text{and}\quad
\eta_{13}^{24} = U^6 \times \frac{w^{13}}{(1+5w+13w^2)^4},
\end{equation}
where
$$
U=1-\sum_{j=1}^\infty \left(\frac{j}{13}\right)\frac{jq^j}{1-q^j},
$$
and $\left(\frac{j}{13}\right)$ is the Legendre symbol.
On using~\eqref{lemma5point4} in~\eqref{13definitions} we obtain
\begin{equation}
\label{Zsquared}
Z^2 = U^2 \times (1+5w+13w^2)
\end{equation}
and it follows from the definitions of $U$ and $w$
that the coefficients in the $q$-expansion of $Z^2$ are integers.
Next, since
$$
X=\frac{w}{1+5w+13w^2},\quad\text{with}\quad w=q\prod_{j=1}^\infty \frac{(1-q^{13j})^2}{(1-q^j)^2},
$$
it follows that the coefficients in the $q$-expansion of $X$ are integers.
Further, it is clear from the above that
$$
Z^2 = 1+O(q) \quad\text{and}\quad X=q+O(q^2).
$$
Hence, the coefficients~$z(n)$ in the expansion
$$
Z^2 = \sum_{n=0}^\infty z(n)X^n = 1+3X+25X^2+195X^3+1729X^4+16107X^5+156481X^6+\cdots
$$
are integers.
Now write
$$
Z^2 = 1+I \quad\text{where}\quad I=\sum_{n=1}^\infty z(n)X^n = 3X+25X^2+195X^3+1729X^4+\cdots
$$
so that $I$ is a power series with integer coefficients and no constant term. 
By the binomial theorem we have
$$
Z = Z^2\times Z^{-1} = Z^2 \times (1+I)^{-1/2} = 
Z^2 \times \sum_{j=0}^\infty {2j \choose j} \left(-\frac{I}{4}\right)^j.
$$
When the expression on the right hand side is expanded in powers of~$X$,
the only terms in the sum that contribute to the coefficient of $X^n$ are those
$Z^2 {2j \choose j}(-\frac{I}4)^j$ with $0\leq j \leq n$. Since $Z^2$ and $I$ can each be expanded in power series in~$X$ with
integer coefficients, it follows that $4^nT(n)$ is an integer.

Let $S(n) = 4^nT(n)$. Then all of the above can be summarised as follows.
\begin{theorem}
Let $\left\{S(n)\right\}$ be the sequence defined by the recurrence relation
\begin{align*}
(n+1)^3S(n+1) &= 6(2n + 1)(3n^2 + 3n + 1)S(n) \\
&\qquad+ 4n(148n^2 + 27)S(n-1) \\
&\qquad\qquad+ 24(6n - 5)(6n - 1)(2n - 1)S(n-2)
\end{align*}
and initial conditions $S(0)=1$, $S(-1)=0$, $S(-2)=0$.
Let $X$ and $Z$ be defined by~\eqref{13definitions}, i.e.,
$$
X=\frac{w}{1+5w+13w^2}\quad\text{and}\quad Z=\frac{\eta_1^2\eta_{13}^2}{X^{7/6}},
$$
with $w=\eta_{13}^2/\eta_1^2$.  Then $S(n)$ is always an integer, and
$$
Z = \sum_{n=0}^\infty S(n) \left(\frac{X}{4}\right)^n.
$$
\end{theorem}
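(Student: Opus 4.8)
The plan is to recognise that the theorem merely repackages, through the substitution $S(n)=4^nT(n)$, the facts already assembled in this section, so the proof should reduce to collecting those pieces. First I would dispose of the generating function identity, which is essentially free: since $S(n)(X/4)^n = 4^nT(n)\cdot 4^{-n}X^n = T(n)X^n$, the series $\sum_{n=0}^\infty S(n)(X/4)^n$ agrees term by term with $\sum_{n=0}^\infty T(n)X^n=Z$. The initial conditions are equally immediate: $S(0)=4^0T(0)=1$ and $S(-1)=S(-2)=0$ follow from $S(n)=4^nT(n)$ together with $T(0)=1$ and $T(-1)=T(-2)=0$.

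Next I would obtain the stated recurrence for $S(n)$ from the four-term recurrence for $T(n)$ by a purely mechanical rescaling. Substituting $T(m)=4^{-m}S(m)$ and multiplying through by $4^{n+1}$, the three right-hand terms acquire the factors $4^{1}$, $4^{2}$ and $4^{3}$ respectively (from the powers $4^{-n}$, $4^{-(n-1)}$ and $4^{-(n-2)}$), and these convert the fractional coefficients $\tfrac32$, $\tfrac14$ and $\tfrac38$ into the integers $6$, $4$ and $24$ that appear in the statement. This step involves only bookkeeping of powers of $4$ and no genuine computation.

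The substantive content is the integrality of $S(n)$, and here I would lean on the argument already given above rather than reprove anything. The crucial input is that $Z^2$ has a power series expansion in $X$ with integer coefficients; this is supplied by the identity $Z^2=U^2(1+5w+13w^2)$ in~\eqref{Zsquared}, which rests in turn on the eta-quotient identities~\eqref{lemma5point4} from~\cite{cooperye13}, together with the integrality of the $q$-expansions of $w$ and $X$. Writing $Z^2=1+I$ with $I$ an integral power series in $X$ and no constant term, the binomial expansion $Z=Z^2(1+I)^{-1/2}=Z^2\sum_{j\ge0}\binom{2j}{j}(-I/4)^j$ forces the coefficient of $X^n$ in $Z$ to be an integer divided by $4^n$, whence $S(n)=4^nT(n)\in\mathbb{Z}$. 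I expect the only real obstacle to be the justification of~\eqref{Zsquared}, i.e.\ the passage through the eta-quotient identities; once that is granted every remaining step is formal.
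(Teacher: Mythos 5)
Your proposal is correct and follows essentially the same route as the paper: the theorem is indeed just the substitution $S(n)=4^nT(n)$ applied to the recurrence, initial conditions, and generating function already derived, with the integrality resting on the identity $Z^2=U^2(1+5w+13w^2)$, the integrality of the $q$-expansions of $Z^2$ and $X$, and the binomial expansion $Z=Z^2\sum_{j\ge 0}\binom{2j}{j}(-I/4)^j$ in which only the terms with $j\le n$ contribute to the coefficient of $X^n$. The paper gives no separate proof beyond the remark that the preceding discussion can be summarised as the theorem, and your rescaling bookkeeping (factors $4$, $16$, $64$ turning $\tfrac32$, $\tfrac14$, $\tfrac38$ into $6$, $4$, $24$) checks out.
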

The first few terms $\left\{S(n)\right\}_{n=0}^{10}$ are given by:
\begin{align*}
&1,\;6,\; 182,\; 5148,\; 173862,\; 6266676,\; 237979196,\; 9366227832,\; 378768328198, \\
&15643121895492,\; 657035290739412.
\end{align*}
I did not find any Lucas property satisfied by the sequence $\left\{S(n)\right\}$. It is an open question
as to whether this sequence has any interesting number theoretic properties.

%---------------------------------------------------

\section{Irrational sequences arising from the theory of level 14}
\label{14special}
Ap{\'e}ry-like sequences that arise from the level 14 theory have been studied in \cite[Sec.~7]{alchemy},~\cite{cooperye}
and~\cite[Ex. 6]{guilleratranslation}. In this section we present the detailed modular background.
Four sequences emerge, including two irrational sequences which are new.

Let $w_{+2}$, $w_{+7}$ and $w_{+14}$ be the modular functions defined by
\begin{equation}
\label{uvwdef1}
w_{+2}=\frac{\eta_7^4\eta_{14}^4}{\eta_1^4\eta_2^4},\quad
w_{+7}=\frac{\eta_2^3\eta_{14}^3}{\eta_1^3\eta_7^3},\quad
w_{+14}=\frac{\eta_1^4\eta_{14}^4}{\eta_2^4\eta_7^4}.
\end{equation}
The notation $w_{+m}$, where $m\in\left\{2,7,14\right\}$, is an abbreviation for $w_{14+m}$
which denotes a fixed Hauptmodul for $\Gamma_0(14)+m$.
Let $y_{+2}$, $y_{+7}$ and $y_{+14}$ be the corresponding weight two modular forms defined by
$$
y_{+2}=\frac{\eta_1^5\eta_2^5}{\eta_7^3\eta_{14}^3},\quad
y_{+7}=\frac{\eta_1^4\eta_7^4}{\eta_2^2\eta_{14}^2}, \quad
y_{+14}=\frac{\eta_2^5\eta_7^5}{\eta_1^3\eta_{14}^3}.
$$
Observe that
\begin{equation}
\label{wabc1}
w_{+2}y_{+2}=w_{+7}y_{+7}=w_{+14}y_{+14}=\eta_1\eta_2\eta_7\eta_{14}.
\end{equation}
The $q$-expansions
$$
w_{+2} = q^3 + O(q^4),\quad w_{+7}=q+O(q^2)\quad\mbox{and}\quad w_{+14}=q+O(q^2)
$$
and
$$
y_{+2}=\frac{1}{q^2}+O(1),\quad y_{+7}=1+O(q)\quad\mbox{and}\quad y_{+14}=1+O(q),
$$
indicate that the pair of
functions $w_{+2}$ and $y_{+2}$ may have different properties from the pairs $w_{+7}$, $y_{+7}$ and
$w_{+14}$, $y_{+14}$. 
Indeed, the functions $w_{+7}$ and $w_{+14}$ are related by~\cite[Thm. 3.3]{cooperye}
\begin{equation}
\label{wbwc1}
\frac{1}{w_{+7}}+8w_{+7}=\frac{1}{w_{+14}}+w_{+14}-7,
\end{equation}
whereas $w_{+2}$ is related to $w_{+14}$ by the more complicated identity~\cite[Thm. 3.4]{cooperye}
\begin{equation}
\label{wawb1}
\frac{1}{w_{+2}}+2401w_{+2}
=\frac{1}{w_{+14}^3}-\frac{16}{w_{+14}^2}+\frac{48}{w_{+14}}+32+48w_{+14}-16w_{+14}^2+w_{+14}^3.
\end{equation}
The identity~\eqref{wbwc1}
could have been written in the form
\begin{equation}
\label{14sq}
\frac{1}{w_{+7}}+8w_{+7}+\epsilon=\frac{1}{w_{+14}}+w_{+14}-7+\epsilon,
\end{equation}
for any constant $\epsilon$.
This suggests considering the functions $X_\epsilon$ and $Z_\epsilon$ defined by
\begin{equation}
\label{Xdef1}
\frac{1}{X_\epsilon}=\frac{1}{w_{+7}}+8w_{+7}+\epsilon
=\frac{1}{w_{+14}}+w_{+14}-7+\epsilon
\end{equation}
and
\begin{equation}
\label{Zdef14}
Z_\epsilon = \frac{\eta_1\eta_2\eta_7\eta_{14}}{X_\epsilon},
\end{equation}
so that
$$
X_\epsilon = \frac{1}{q}+(\epsilon-3)+11q+20q^2+ 57q^3+ 92q^4 + 207q^5 + O(q^6)
$$
and
\begin{equation}
\label{Zdef1}
Z_\epsilon=y_{+7} \times (1+\epsilon w_{+7}+8w_{+7}^2) =y_{+14} \times (1+(\epsilon-7) w_{+14}+w_{+14}^2).
\end{equation}
The next result suggests that certain values of $\epsilon$ stand out as special.
\begin{theorem}
\label{thm14}
Let $B_\epsilon$ and $H_\epsilon$ be defined by
$$
B_\epsilon = \Big(1-(\epsilon - 9) X_\epsilon\Big)^{1/2}
 \Big(1-(\epsilon - 5) X_\epsilon\Big)^{1/2}
 \Big(1-2\epsilon X_\epsilon+(\epsilon^2-32)X_\epsilon^2\Big)^{1/2}
$$
and
\begin{align*}
H_\epsilon &=X_\epsilon \Bigg( \epsilon -4\,-\,\frac12\left( 7\,{\epsilon }^{2}-50\,\epsilon +24 \right) X_\epsilon
+ \left( 4\,{\epsilon }^{3}-42\,{\epsilon }^{2}+26\,\epsilon +448 \right) {X_\epsilon}^{2} \\
&\qquad\qquad\qquad\qquad\qquad\qquad\qquad\qquad
-\frac32\, \left( \epsilon -9 \right)\left( \epsilon -5 \right)  \left( {\epsilon }^{2}-32 \right) {X_\epsilon}^{3} \Bigg).
\end{align*}
Then the differentiation formula~\eqref{qdXdq1} and the differential equation~\eqref{D2Z} hold
for $B=B_\epsilon$, $H=H_\epsilon$, $X=X_\epsilon$ and $Z=Z_\epsilon$.
\end{theorem}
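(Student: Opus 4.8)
The plan is to follow the template established in the proof of Theorem~\ref{T4}, while exploiting two structural features of the family $\{X_\epsilon,Z_\epsilon\}$ that make the general-$\epsilon$ statement reducible to a single case. First, the definition \eqref{Zdef14} gives $Z_\epsilon X_\epsilon=\eta_1\eta_2\eta_7\eta_{14}=:N$, a weight-two form that is \emph{independent of} $\epsilon$; hence the differentiation formula \eqref{qdXdq1} to be proved is simply $q\,dX_\epsilon/dq=N\,B_\epsilon$. Second, the defining relation \eqref{Xdef1} shows that $1/X_\epsilon=1/X_{\epsilon_0}+(\epsilon-\epsilon_0)$ for any two parameters, so the whole family is generated from one member by the elementary shift $c:=\epsilon-\epsilon_0$ in the variable $1/X$, under which $X_{\epsilon_0}=X_\epsilon/(1-cX_\epsilon)$ and $Z_\epsilon=Z_{\epsilon_0}\,g$ with $g=1+cX_{\epsilon_0}$. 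One checks directly from \eqref{Xdef1} that the published cases $\epsilon=5$ (level $14$(A), \cite{alchemy}) and $\epsilon=9$ (level $14$(B), \cite{cooperye},\cite{guilleratranslation}) are exactly the members of this family in which one factor of $B_\epsilon$ degenerates, so either may serve as the base case $\epsilon_0$.

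For the differentiation formula I would then propagate from $\epsilon_0$ purely algebraically. Applying $q\,d/dq$ to $1/X_\epsilon=1/X_{\epsilon_0}+c$ gives $q\,dX_\epsilon/dq=(X_\epsilon/X_{\epsilon_0})^2\,q\,dX_{\epsilon_0}/dq=(1-cX_\epsilon)^2 N B_{\epsilon_0}$, so that $B_\epsilon=(1-cX_\epsilon)^2\,B_{\epsilon_0}\!\big(X_\epsilon/(1-cX_\epsilon)\big)$. Substituting the base polynomial $B_{\epsilon_0}^2$ and clearing denominators reproduces exactly the three factored pieces of $B_\epsilon^2$ in the statement; in particular the combination $X_\epsilon^2\big((1/w_{+7}-8w_{+7})^2\big)=1-2\epsilon X_\epsilon+(\epsilon^2-32)X_\epsilon^2$ accounts for the third factor, while the first two factors come from the degenerate factors of $B_{\epsilon_0}^2$. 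The square-root nature of $B_\epsilon$ is inherited from the base case, where it reflects the Atkin--Lehner structure of $\Gamma_0(14)+7$.

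For the differential equation I would use the identity $\uD=Z_\epsilon^{-1}\,q\,d/dq$ (valid once the differentiation formula is known, exactly as in \eqref{diffop}), under which \eqref{D2Z} is equivalent to the compact form $H_\epsilon=Z_\epsilon^{-2}\big(\theta\ell_\epsilon-\tfrac12\ell_\epsilon^2\big)$, where $\theta=q\,d/dq$ and $\ell_\epsilon=\theta\log Z_\epsilon=\uD Z_\epsilon$. Writing $\ell_\epsilon=\ell_{\epsilon_0}+\lambda$ with $\lambda=\theta\log g$ and using $\theta g=cX_{\epsilon_0}Z_{\epsilon_0}B_{\epsilon_0}$, the quasimodular cross-term cancels ($\theta\lambda-\ell_{\epsilon_0}\lambda=Z_{\epsilon_0}\,\theta\mu$ with $\mu=cX_{\epsilon_0}B_{\epsilon_0}/g$), and one obtains the closed transformation law
\[
H_\epsilon=\frac{1}{g^2}\left(H_{\epsilon_0}+X_{\epsilon_0}B_{\epsilon_0}\frac{d\mu}{dX_{\epsilon_0}}-\frac12\mu^2\right),\qquad g=1+cX_{\epsilon_0},\quad \mu=\frac{cX_{\epsilon_0}B_{\epsilon_0}}{g}.
\]
The right-hand side is rational in $X_{\epsilon_0}$ because only $B_{\epsilon_0}^2$ survives in $\mu^2$ and in $X_{\epsilon_0}B_{\epsilon_0}\,d\mu/dX_{\epsilon_0}$; substituting $X_{\epsilon_0}=X_\epsilon/(1-cX_\epsilon)$ and simplifying should return the quartic $H_\epsilon$ of the statement, raising the degree from the cubic $H_{\epsilon_0}$ through the factor $g^{-2}$ and the $\mu$-terms.

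The main obstacle is the algebraic verification in the last step: confirming that the transformation law reproduces \emph{precisely} the stated $B_\epsilon$ and $H_\epsilon$, and in particular that the quasimodular anomalies cancel so that $H_\epsilon$ is a genuine polynomial in $X_\epsilon$ rather than a rational function. A self-contained alternative, mirroring part~(c) of Theorem~\ref{T4}, would compute $\theta\ell_\epsilon-\tfrac12\ell_\epsilon^2$ directly from $\ell_\epsilon=\theta\log N-Z_\epsilon B_\epsilon$ using Ramanujan's differential equations for $P(q^d)$; there the hard step is instead the reduction of the weight-four combination $\theta(\theta\log N)-\tfrac12(\theta\log N)^2$, a sum of $P(q^d)^2$, $Q(q^d)$ and cross terms for $d\in\{1,2,7,14\}$, to a polynomial in $X_\epsilon$ after division by $Z_\epsilon^2$ — the exact analogue of the step $16Q(q^4)+Q(q)=17Z^2-32Z^2X$ in Theorem~\ref{T4}, requiring the relevant weight-four level-$14$ eta and theta relations.
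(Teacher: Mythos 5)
Your proposal is correct and follows essentially the same route as the paper: the paper's proof likewise takes the published case $\epsilon=9$ from \cite[Lemma~4.1 and Theorem~4.2]{cooperye} as the base and transfers it to general $\epsilon$ via the change of variables $\frac{1}{X_\epsilon}-\epsilon=\frac{1}{X_{\epsilon=9}}-9$, $X_\epsilon Z_\epsilon=X_{\epsilon=9}Z_{\epsilon=9}$, exactly the shift you describe. You simply carry out in explicit detail the propagation of $B$ and $H$ under this shift that the paper leaves to the reader.
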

\begin{proof}
The results were proved for the case $\epsilon=9$ in \cite[Lemma~4.1 and Theorem~4.2]{cooperye}.
The proofs in the other cases follow by making the change of variables
$$
\frac{1}{X_\epsilon}-\epsilon = \frac{1}{X_{\epsilon=9}}-9\quad \mbox{and}\quad X_\epsilon Z_\epsilon = X_{\epsilon=9} Z_{\epsilon=9}.
$$
\end{proof}
Now consider the expansion
$$
Z_\epsilon=\sum_{n=0}^\infty T_\epsilon(n)X_\epsilon^n.
$$
By Theorem~\ref{interesting}, the coefficients $T_\epsilon(n)$ satisfy the five-term recurrence relation
\begin{align}
(n+1)^3 T_\epsilon(n+1)
&=(2n+1)((2\epsilon-7)(n^2+n) + \epsilon  - 4)T_\epsilon(n)  \label{14rec} \\
&-n((6\epsilon^2- 42\epsilon+13)n^2 + \epsilon^2 - 8\epsilon + 11)T_\epsilon(n-1) \nonumber \\
&+n(2n - 1)(n - 1)(2\epsilon^3 - 21\epsilon^2 + 13\epsilon + 224)T_\epsilon(n-2) \nonumber \\
&-n(n - 1)(n - 2)(\epsilon - 5)(\epsilon - 9)(\epsilon^2 - 32)T_\epsilon(n-3). \nonumber
\end{align}
The single initial condition $T_\epsilon(0)=1$ is enough to start the recurrence.
\\
If \mbox{$\epsilon \in \left\{5,\, 9, \, \pm \sqrt{32}\right\}$}
then the degree of $H_\epsilon$ drops by one, the function $B_\epsilon$ also simplifies, and
the recurrence relation~\eqref{14rec} reduces to a four-term relation of the form
\begin{align}
(n+1)^3T_\epsilon(n+1) =(2n+1)(an^2+an+b)&T_\epsilon(n)  +n(cn^2+d)T_\epsilon(n-1) \label{4term} \\
& +en(2n-1)(n-1)T_\epsilon(n-2). \nonumber 
\end{align}
It may be noted that the values $\epsilon=5$, $9$, $\sqrt{32}$ and $-\sqrt{32}$ are precisely the values that
make one of the sides of~\eqref{14sq} a square, and we will write $T_{14A}(n)$,  $T_{14B}(n)$,  $T_{14C}(n)$
and $T_{14\overline{C}}(n)$, respectively, for $T_\epsilon(n)$ in these cases.

The sequence $\left\{T_{14B}(n)\right\}$ was first studied in~\cite{guilleratranslation}
and investigated further in~\cite{cooperye}. The sequence $\left\{T_{14A}(n)\right\}$ seems to have first arisen in~\cite{alchemy} in the course
of proving some conjectures of Z.-W. Sun~\cite{sun}. The sequences $\left\{T_{14A}(n)\right\}$ and $\left\{T_{14B}(n)\right\}$
are listed in Table~\ref{1to35} along with the corresponding modular forms.
The sequences $\left\{T_{14C}(n)\right\}$ and $\left\{T_{14\overline{C}}(n)\right\}$ are new.

The first few terms of the sequences $\left\{T_{14A}(n)\right\}$, $\left\{T_{14B}(n)\right\}$
and $\left\{T_{14C}(n)\right\}$ are listed in Table~\ref{table14}.
If 
$$T_{14C}(n) = u+v\sqrt{2}$$
for some integers $u$ and $v$ then
$$T_{14\overline{C}}(n) = u-v\sqrt{2},$$
so that $T_{14\overline{C}}(n)$
is obtained from $T_{14C}(n)$ by conjugation.

The values of the constants $a$, $b$, $c$, $d$ and $e$ corresponding to the
recurrence relation~\eqref{4term} for the values $\epsilon=5$, $9$, $\pm \sqrt{32}$ are given in Table~\ref{table5}
under the heading level $14$.

The following formulas, involving binomial sums, were proved in~\cite{alchemy}:
\begin{align*}
T_{14A}(n)
&= \sum_{j,k} {n+j \choose 2j+2k} { 2j+2k \choose j+k} {2k \choose k}^2 {k \choose j} \\
&= \sum_{j,k} (-1)^{n-j}{n+j \choose 2j+2k} {2j+2k \choose j+k} {2k \choose k} {j+2k \choose k} {j+k \choose k} \\
&= \sum_{j,k} {n+j-k \choose 2j+2k} {2j+2k \choose j+k} {j+k \choose k}^2 {2j \choose j+k} (-3)^{n-j-3k}.
\end{align*}
From~\eqref{Zdef14} we have
$$
Z_\epsilon X_\epsilon = \eta_1\eta_2\eta_7\eta_{14}
$$
is independent of $\epsilon$, 
and on using~\eqref{Xdef1} it follows further that
$$
\sum_{n=0}^\infty T_\epsilon(n) \left(\frac{w}{1+\epsilon w+8w^2}\right)^{n+1}
= w-4w^2+16w^3-72w^4+368w^5-2080w^6+\cdots
$$
is also independent of $\epsilon$.
In particular, the generating functions for the four sequences $\left\{T_{\epsilon}(n)\right\}$ for
$\epsilon\in\left\{5,9,\pm\sqrt{32}\right\}$ are related as follows.
\begin{theorem}
The following identity holds, in a neighborhood of $w=0$:
\begin{align*}
\sum_{n=0}^\infty T_{14A}(n)\left(\frac{w}{1+5w+8w^2}\right)^{n+1}
&=\sum_{n=0}^\infty T_{14B}(n)\left(\frac{w}{1+9w+8w^2}\right)^{n+1} \\
&=\sum_{n=0}^\infty T_{14C}(n)\left(\frac{w}{1+\sqrt{32}w+8w^2}\right)^{n+1} \\
&=\sum_{n=0}^\infty T_{14\overline{C}}(n)\left(\frac{w}{1-\sqrt{32}w+8w^2}\right)^{n+1}.
\end{align*}
\end{theorem}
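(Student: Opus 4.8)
The plan is to prove that all four generating functions coincide by showing that each of them equals a \emph{single} series that does not depend on $\epsilon$, namely $Z_\epsilon X_\epsilon = \eta_1\eta_2\eta_7\eta_{14}$. First I would start from the defining expansion $Z_\epsilon=\sum_{n=0}^\infty T_\epsilon(n)X_\epsilon^n$ and multiply through by $X_\epsilon$, so that
$$
Z_\epsilon X_\epsilon = \sum_{n=0}^\infty T_\epsilon(n)X_\epsilon^{\,n+1}.
$$

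Next I would rewrite $X_\epsilon$ using~\eqref{Xdef1}. With $w=w_{+7}$ (which is the variable appearing in the theorem), the relation $1/X_\epsilon=1/w+8w+\epsilon$ gives $X_\epsilon = w/(1+\epsilon w+8w^2)$, so the right-hand side above is precisely $\sum_{n=0}^\infty T_\epsilon(n)\bigl(w/(1+\epsilon w+8w^2)\bigr)^{n+1}$. On the other hand, the definition~\eqref{Zdef14} of $Z_\epsilon$ immediately yields $Z_\epsilon X_\epsilon = \eta_1\eta_2\eta_7\eta_{14}$, which is manifestly independent of $\epsilon$. This is exactly the observation recorded in the display just before the theorem, where the common series is computed to be $w-4w^2+16w^3-\cdots$.

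Finally I would specialize $\epsilon$ to the four values $5$, $9$, $\sqrt{32}$ and $-\sqrt{32}$, which by the naming convention introduced after~\eqref{4term} produce the sequences $T_{14A}$, $T_{14B}$, $T_{14C}$ and $T_{14\overline{C}}$ respectively. Since each of the four generating functions equals the same $\epsilon$-free series $\eta_1\eta_2\eta_7\eta_{14}$, they are all equal to one another, which is the assertion of the theorem. The only point requiring a word of care is that $w=w_{+7}=q+O(q^2)$ is an invertible change of variable near $q=0$, so the composition of power series is well defined and the identities hold in a neighbourhood of $w=0$ as claimed.

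There is essentially no hard step here: once the identity $Z_\epsilon X_\epsilon = \eta_1\eta_2\eta_7\eta_{14}$ is in hand, independence of $\epsilon$ does all the work. The only thing to verify is the bookkeeping that the four labelled sequences correspond to the four chosen values of $\epsilon$ and that $X_\epsilon$ takes the stated form $w/(1+\epsilon w+8w^2)$; both are routine consequences of~\eqref{Xdef1}, \eqref{Zdef14} and the discussion surrounding~\eqref{4term}.
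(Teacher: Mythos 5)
Your proposal is correct and is essentially identical to the paper's own argument, which appears as the discussion immediately preceding the theorem: the paper likewise observes that $Z_\epsilon X_\epsilon=\eta_1\eta_2\eta_7\eta_{14}$ is independent of $\epsilon$ by~\eqref{Zdef14}, rewrites $X_\epsilon$ as $w/(1+\epsilon w+8w^2)$ via~\eqref{Xdef1}, and specializes $\epsilon$ to $5$, $9$, $\pm\sqrt{32}$.
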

\begin{table}
\caption{The first few terms of the level $14$ sequences $\left\{T_\epsilon(n)\right\}$}
\label{table14}
{\renewcommand{\arraystretch}{1.4}
{\begin{tabular}{|c|c|c|c|}
\hline
$n$ & $T_{14A}(n): \; \epsilon=5$ & $T_{14B}(n): \epsilon=9$ & $T_{14C}(n): \; \epsilon=\sqrt{32}$  \\
\hline
0 & 1 & $1$ & $1$  \\ 
1 & 1 & $5$ & $-4+4\sqrt{2}$  \\  
2 & 9 &  $33$ &$56-32\sqrt{2}$  \\ 
3 & 49 & $269$ & $-520+416\sqrt{2}$  \\ 
4 & 385 & $2545$ & $6512-4224\sqrt{2}$  \\ 
5 & 2961 & $26565$ &  $-69664+52416\sqrt{2}$  \\ 
6 & 24801 & $295785$ & $862904-582400\sqrt{2}$  \\ 
7 & 212409 & $3441765$ & $-9870928+7232544\sqrt{2}$  \\ 
8 & 1878129 & $41336145$ &  $123164432-84724224\sqrt{2}$  \\ 
9 & 16924945 & $508419125$ &  $-1472036416+1063509568\sqrt{2}$  \\ 
10 & $\;\;155204329\;\;$ & $\;\;6370849633\;\;$ &  $\;\;18601926816-12933544448\sqrt{2}\;\;$  \\
\hline
\end{tabular}}}
\end{table}

We end the section with a conjectural congruence identity that has been tested extensively by computer.

\begin{conjecture}
\label{conj14}
The sequences $\left\{T_{14A}(n)\right\}$ and $\left\{T_{14B}(n)\right\}$ satisfy the Lucas
congruence~\eqref{a4} for every prime $p$. The sequences $\left\{T_{14C}(n)\right\}$
and $\left\{T_{14\overline{C}}(n)\right\}$ satisfy the Lucas congruence for
a prime $p$ if and only if $p=2$ or $p$ is congruent to $1$ or $7$ modulo $8$.
\end{conjecture}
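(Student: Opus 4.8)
The plan is to split the four sequences into two groups according to the arithmetic of $\sqrt{2}$, since the condition ``$p=2$ or $p\equiv 1,7\pmod 8$'' attached to $\left\{T_{14C}(n)\right\}$ and $\left\{T_{14\overline{C}}(n)\right\}$ is precisely the condition that $2$ be a quadratic residue modulo~$p$, equivalently that $\sqrt{2}$ lie in $\mathbb{F}_p$. I would treat the rational sequences $\left\{T_{14A}(n)\right\}$ and $\left\{T_{14B}(n)\right\}$ first and then adapt the method to the two irrational sequences, where the dependence on $\left(\tfrac{2}{p}\right)$ must emerge naturally.

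For $\left\{T_{14A}(n)\right\}$, and after producing an analogous closed form for $\left\{T_{14B}(n)\right\}$, I would follow the approach indicated after~\eqref{a1} and in the discussion of levels $7$, $10$ and $18$: start from one of the binomial sum formulas for $T_{14A}(n)$ displayed above and apply the digit-factorization technique of Malik and Straub~\cite{malikstraub}. Concretely, writing the base-$p$ expansions of $n$ and of the summation indices and invoking Lucas' theorem ${a\choose b}\equiv\prod_i{a_i\choose b_i}\pmod p$ for each binomial factor, one shows that the multi-sum splits as a product over the digit-blocks of $n$ with no carries modulo~$p$, which is exactly the Lucas congruence~\eqref{a4}. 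The work here is the two-variable bookkeeping needed to verify that cross-terms between digit-blocks vanish modulo~$p$; this is routine in spirit but must be carried out for the specific double sums in question.

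For $\left\{T_{14C}(n)\right\}$ and $\left\{T_{14\overline{C}}(n)\right\}$ the essential extra input is a representation of the form $T_{14C}(n)=\sum_{\mathbf{k}}c(\mathbf{k})\,(\sqrt{2})^{L(\mathbf{k})}$, where the $c(\mathbf{k})$ are products of binomial coefficients and $L$ is a linear form in the summation indices. One natural route is the generating-function identity proved just above, which equates $\sum_n T_{14C}(n)\bigl(w/(1+2\sqrt{2}\,w)^2\bigr)^{n+1}$ with $\sum_n T_{14A}(n)\bigl(w/(1+5w+8w^2)\bigr)^{n+1}$; the hope is that transporting the known formula for $T_{14A}$ through this change of variable introduces $\sqrt{2}$ only through a single power factor. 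Granting this, reduce modulo $p$ in $\mathbb{Z}[\sqrt{2}]$ and combine Lucas' theorem for the integer factors $c(\mathbf{k})$ (as in the $A,B$ case) with the identity $(\sqrt{2})^{p^{\,i}}=\left(\tfrac{2}{p}\right)^{\,i}\sqrt{2}$ in $\mathbb{F}_p(\sqrt{2})$. When $\left(\tfrac{2}{p}\right)=1$ the factor $(\sqrt{2})^{L(\mathbf{k})}$ is itself digit-multiplicative, so the product over digit-blocks reassembles to give~\eqref{a4}; the ramified prime $p=2$, where $\sqrt{2}\equiv 0$ and both sequences collapse to the same integer sequence modulo $(\sqrt{2})$, is handled directly and falls on the same side. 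When $\left(\tfrac{2}{p}\right)=-1$ the Frobenius sends $\sqrt{2}\mapsto-\sqrt{2}$, so the digit contributions acquire a sign twist (schematically $(-1)^{\sum_i i\,k_i}$); this twist interchanges the reductions of $T_{14C}$ and $T_{14\overline{C}}$ and obstructs the plain Lucas congruence, yielding the ``only if'' direction.

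The main obstacle is producing the binomial (or otherwise digit-additive) representation of $T_{14C}(n)$ with $\sqrt{2}$ isolated in a single linear-exponent factor; without it the clean separation of the integer and $\sqrt{2}$-parts breaks down and the elegant dependence on $\left(\tfrac{2}{p}\right)$ is not visible. A secondary difficulty is making the ``only if'' direction rigorous: one must show that the Frobenius twist does not accidentally cancel after summation for every~$n$, so that the congruence genuinely fails for each inert prime rather than merely resisting this method. Exhibiting a single explicit failure, for instance at $n=p$, together with the twisted-Lucas identity, should suffice to close that direction.
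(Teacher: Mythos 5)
This statement is presented in the paper as a \emph{conjecture}: the author supplies no proof, only extensive numerical verification, and explicitly remarks (in the parallel level~11 discussion) that the envisaged route would be to find a binomial-sum formula and then apply the methods of Malik and Straub, but that the required formulas are not known. So there is no paper proof to compare against, and your proposal should be judged as a research plan rather than a proof. As such it contains one genuinely good observation that the paper leaves implicit: the condition ``$p=2$ or $p\equiv 1,7\pmod 8$'' is exactly $\left(\tfrac{2}{p}\right)\neq -1$, i.e.\ the condition that $p$ split or ramify in $\mathbb{Z}[\sqrt{2}]$, and the identity $(\sqrt{2})^{p}=\left(\tfrac{2}{p}\right)\sqrt{2}$ is surely the mechanism behind the dichotomy. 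That part of your analysis is correct and explains \emph{why} the conjecture takes the shape it does.

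The gaps, however, are substantial. First, a binomial sum is known only for $T_{14A}(n)$ (the three double sums displayed in Section~7); no such representation is known for $T_{14B}$, $T_{14C}$ or $T_{14\overline{C}}$, and your plan presupposes all four. Your proposed workaround --- transporting the $T_{14A}$ formula through the generating-function identity relating the four sequences --- is not obviously workable: the substitution $w/(1+5w+8w^2)\mapsto w/(1+\sqrt{32}\,w+8w^2)$ is an algebraic change of variable whose inversion produces nested convolutions, and there is no reason the resulting expression isolates $\sqrt{2}$ in a single linear-exponent factor; indeed the tabulated values $T_{14C}(n)=u+v\sqrt{2}$ show no evident multiplicative structure in $v$. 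Second, even for $T_{14A}$, the Malik--Straub method is not a termwise application of Lucas' theorem: the binomial factors such as ${n+j\choose 2j+2k}$ mix the outer variable $n$ with the inner summation indices, and one must control the carries in $n+j$ and show that the contributions with carries vanish modulo $p$; Malik and Straub carry this out only for the specific sporadic sequences in Tables~\ref{table11} and~\ref{table1} and the levels $7$, $10$, $18$, and it is precisely the failure of anyone to push this through for the level~14 sums that leaves the statement conjectural. Third, the ``only if'' direction needs more than the heuristic that the Frobenius twist ``obstructs'' the congruence: you would have to exhibit, for every inert prime $p$, a specific $n$ at which the congruence fails, and your suggestion of $n=p$ is not backed by any computation or identity. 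Until the missing representations are found and the digit-factorization is actually executed, this remains a plausible strategy for an open problem, not a proof.
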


%---------------------------------------------------

\section{Complex valued sequences arising from the level 15 theory}
\label{154}
Despite having many similarities, the level 15 theory has only been studied in~\cite{cooperye}. For example, in the last
section of~\cite{alchemy}, it was noted with surprise that while the level 14 theory occurred several times
in that work, the level 15 theory did not show up at all. In this section we develop the modular background
and obtain four sequences, three of which are new.

Using similar notation as for the previous section, let $w_{+3}$, $w_{+5}$ and $w_{+15}$ be the modular functions defined by
\begin{equation}
\label{uvwdef}
w_{+3}=\frac{\eta_5^3\eta_{15}^3}{\eta_1^3\eta_3^3},\quad
w_{+5}=\frac{\eta_3^2\eta_{15}^2}{\eta_1^2\eta_5^2},\quad
w_{+15}=\frac{\eta_1^3\eta_{15}^3}{\eta_3^3\eta_5^3},
\end{equation}
and let $y_{+3}$, $y_{+5}$ and $y_{+15}$ be the corresponding weight two modular forms defined by
$$
y_{+3}=\frac{\eta_1^4\eta_3^4}{\eta_5^2\eta_{15}^2},\quad
y_{+5}=\frac{\eta_1^3\eta_5^3}{\eta_3\eta_{15}}, \quad
y_{+15}=\frac{\eta_3^4\eta_5^4}{\eta_1^2\eta_{15}^2}.
$$
Analogous to \eqref{wabc1} we have
\begin{equation}
\label{wabc}
w_{+3}y_{+3}=w_{+5}y_{+5}=w_{+15}y_{+15}=\eta_1\eta_3\eta_5\eta_{15}.
\end{equation}
Proceeding as in the previous section, the $q$-expansions
$$
w_{+3} = q^2 + O(q^3),\quad w_{+5}=q+O(q^2)\quad\mbox{and}\quad w_{+15}=q+O(q^2)
$$
and
$$
y_{+3}=\frac{1}{q}+O(1),\quad y_{+5}=1+O(q)\quad\mbox{and}\quad y_{+15}=1+O(q),
$$
suggest that the pair of
functions $w_{+3}$ and $y_{+3}$ may have different properties from the pairs $w_{+5}$, $y_{+5}$ and
$w_{+15}$, $y_{+15}$. 
The functions $w_{+5}$ and $w_{+15}$ are related by \cite[Thm. 5.3]{cooperye}
\begin{equation}
\label{wbwc}
\frac{1}{w_{+5}}+9w_{+5}+5=\frac{1}{w_{+15}}-w_{+15},
\end{equation}
whereas $w_{+3}$ is related to $w_{+5}$ by the more complicated identity \cite[Thm. 5.4]{cooperye}
\begin{equation}
\label{wawb}
\frac{1}{w_{+3}}-125w_{+3}
=\frac{1}{w_{+5}^2}+\frac{1}{w_{+5}}-9w_{+5}-81w_{+5}^2.
\end{equation}
The identity~\eqref{wbwc}
could have been written in the form
\begin{equation}
\label{15sq}
\frac{1}{w_{+5}}+9w_{+5}+5+\epsilon=\frac{1}{w_{+15}}-w_{+15}+\epsilon
\end{equation}
for any constant $\epsilon$.
This suggests considering the functions $X_\epsilon$ and $Z_\epsilon$ defined by
\begin{equation}
\label{Xdef}
\frac{1}{X_\epsilon}=\frac{1}{w_{+15}}-w_{+15}+\epsilon\quad\mbox{and}\quad X_\epsilon Z_\epsilon = \eta_1\eta_3\eta_5\eta_{15}
\end{equation}
so that
\begin{equation}
\label{Zdef}
Z_\epsilon=y_{+15} \times (1+\epsilon w_{+15}-w_{+15}^2).
\end{equation}
To help determine interesting values of $\epsilon$, we have the following analogue of Theorem~\ref{thm14}.
\begin{theorem}
\label{t15}
Let $B_\epsilon$ and $H_\epsilon$ be defined by
$$
B_\epsilon = \Big(1-(\epsilon - 1) X_\epsilon\Big)^{1/2}
 \Big(1-(\epsilon + 11) X_\epsilon\Big)^{1/2}
 \Big(1-2\epsilon X_\epsilon+(\epsilon^2 + 4)X_\epsilon^2\Big)^{1/2}
$$
and
\begin{align*}
H_\epsilon &=X_\epsilon \Bigg( \epsilon +2\,-\,\frac12\left( 7\,{\epsilon }^{2}+34\,\epsilon -8 \right) X_\epsilon
+ \left( 4\,{\epsilon }^{3}+30\,{\epsilon }^{2}-14\,\epsilon +40 \right) {X_\epsilon}^{2} \\
&\qquad\qquad\qquad\qquad\qquad\qquad\qquad\qquad
-\frac32\, \left( \epsilon -1 \right)\left( \epsilon +11 \right)  \left( {\epsilon }^{2}+4 \right) {X_\epsilon}^{3} \Bigg).
\end{align*}
Then the differentiation formula~\eqref{qdXdq1} and the differential equation~\eqref{D2Z} hold
for $B=B_\epsilon$, $H=H_\epsilon$, $X=X_\epsilon$ and $Z=Z_\epsilon$.
\end{theorem}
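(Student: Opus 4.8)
The plan is to mirror the proof of Theorem~\ref{thm14} for level~14: establish the differentiation formula~\eqref{qdXdq1} and the differential equation~\eqref{D2Z} for a single base value $\epsilon_0$, and then propagate the result to all $\epsilon$ by a change of variables. A natural base case is one of the distinguished values for which the modular background has already been worked out in~\cite{cooperye}, for instance $\epsilon_0=1$ (the level~15(A) entry of Tables~\ref{1to35} and~\ref{1to35b}), where one side of~\eqref{15sq} becomes a perfect square.

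The first step is to record the two invariants of the family. From~\eqref{Xdef} and~\eqref{Zdef}, together with $w_{+15}y_{+15}=\eta_1\eta_3\eta_5\eta_{15}$ in~\eqref{wabc}, both the quantity $\tfrac{1}{X_\epsilon}-\epsilon=\tfrac{1}{w_{+15}}-w_{+15}$ and the product $X_\epsilon Z_\epsilon=\eta_1\eta_3\eta_5\eta_{15}$ are independent of $\epsilon$. Hence any two members of the family are related by
$$
\frac{1}{X_\epsilon}-\epsilon = \frac{1}{X_{\epsilon_0}}-\epsilon_0 \qquad\text{and}\qquad X_\epsilon Z_\epsilon = X_{\epsilon_0} Z_{\epsilon_0},
$$
which I would rewrite in the explicit form $X_\epsilon=X_{\epsilon_0}/(1+cX_{\epsilon_0})$ and $Z_\epsilon=(1+cX_{\epsilon_0})Z_{\epsilon_0}$ with $c=\epsilon-\epsilon_0$, inverted as $X_{\epsilon_0}=X_\epsilon/(1-cX_\epsilon)$.

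For the differentiation formula, I would differentiate the first invariant: since $q\,\tfrac{\ud}{\ud q}(1/X_\epsilon)$ is $\epsilon$-independent, it follows that $q\,\ud X_\epsilon/\ud q=(X_\epsilon/X_{\epsilon_0})^2\,q\,\ud X_{\epsilon_0}/\ud q$, and combining this with the invariance of $X_\epsilon Z_\epsilon$ forces $B_\epsilon=(X_\epsilon/X_{\epsilon_0})^2 B_{\epsilon_0}$, equivalently $B_\epsilon^2(X_\epsilon)=(1-cX_\epsilon)^4\,B_{\epsilon_0}^2\!\bigl(X_\epsilon/(1-cX_\epsilon)\bigr)$. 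Checking that the right-hand side collapses to the stated quartic $B_\epsilon^2$ is then a routine polynomial identity in $X_\epsilon$ once the base-case $B_{\epsilon_0}^2$ is known. For the differential equation, I would exploit the fact that, writing $\zeta=\sqrt{Z}$, the nonlinear equation~\eqref{D2Z} is equivalent to the linear equation $\uD^2\zeta=\tfrac12 H\zeta$, since $\uD^2Z-\tfrac{1}{2Z}(\uD Z)^2=2\zeta\,\uD^2\zeta$ when $Z=\zeta^2$. Under the change of variables, \eqref{diffop} gives $\uD_\epsilon=(1+cX_{\epsilon_0})^{-1}\uD_{\epsilon_0}$ while $\zeta_\epsilon=(1+cX_{\epsilon_0})^{1/2}\zeta_{\epsilon_0}$, so applying $\uD_\epsilon$ twice and collecting terms produces $\uD_\epsilon^2\zeta_\epsilon=\tfrac12 H_\epsilon\zeta_\epsilon$ with an $H_\epsilon$ that can be read off and compared with the stated expression.

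The main obstacle is the last verification: confirming that the $H_\epsilon$ produced by the change of variables coincides, coefficient by coefficient and for all $\epsilon$, with the explicit polynomial in the statement. This is bookkeeping-heavy because the factor $(1+cX_{\epsilon_0})^{1/2}$ contributes first-derivative terms through $\uD_{\epsilon_0}\zeta_{\epsilon_0}$, and the coefficients of $H_\epsilon$ are themselves cubic in $\epsilon$. I would organise it as a single polynomial identity in the two variables $X_\epsilon$ and $\epsilon$, reducing everything to the already-established case $\epsilon=\epsilon_0$; as an independent confirmation one can verify both~\eqref{qdXdq1} and~\eqref{D2Z} directly from the $q$-expansions of $X_\epsilon$ and $Z_\epsilon$ to sufficiently many orders, which suffices because $X_\epsilon$ and $Z_\epsilon$ live on a group of genus zero.
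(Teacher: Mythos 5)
Your proposal is correct and follows essentially the same route as the paper: the paper's proof likewise cites \cite[Lemma~6.1 and Theorem~6.2]{cooperye} for the base case $\epsilon=1$ and then transfers the result via the change of variables $\tfrac{1}{X_\epsilon}-\epsilon=\tfrac{1}{X_{\epsilon=1}}-1$, $X_\epsilon Z_\epsilon=X_{\epsilon=1}Z_{\epsilon=1}$. The only difference is that you spell out the bookkeeping (the explicit M\"obius substitution, the transformation law $B_\epsilon=(X_\epsilon/X_{\epsilon_0})^2B_{\epsilon_0}$, and the linearisation via $\zeta=\sqrt{Z}$) that the paper leaves implicit.
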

\begin{proof}
The identities were proved for the case $\epsilon=1$ in \cite[Lemma~6.1 and Theorem~6.2]{cooperye}.
The proofs in the other cases follow by the change of variables
$$
\frac{1}{X_\epsilon}-\epsilon = \frac{1}{X_{\epsilon=1}}-1\quad \mbox{and}\quad X_\epsilon Z_\epsilon = X_{\epsilon=1} Z_{\epsilon=1}.
$$
\end{proof}
Now consider the expansion
$$
Z_\epsilon=\sum_{n=0}^\infty T_\epsilon(n)X_\epsilon^n.
$$
By Theorem~\ref{interesting}, the coefficients $T_\epsilon(n)$ satisfy the five-term recurrence relation
\begin{align}
(n+1)^3 T_\epsilon(n+1)
&=(2n+1)((2\epsilon+5)(n^2+n) + \epsilon  +2)T_\epsilon(n)  \label{15rec} \\
&-n((6\epsilon^2+30\epsilon-7)n^2 + \epsilon^2 +4\epsilon - 1)T_\epsilon(n-1) \nonumber \\
&+n(2n - 1)(n - 1)(2\epsilon^3 +15\epsilon^2 -7\epsilon + 20)T_\epsilon(n-2) \nonumber \\
&-n(n - 1)(n - 2)(\epsilon - 1)(\epsilon +11)(\epsilon^2 +4)T_\epsilon(n-3). \nonumber
\end{align}
If $\epsilon \in \left\{1,\, -11, \, \pm 2i\right\}$
then the degree of $H_\epsilon$ drops by one, the function $B_\epsilon$ also simplifies, and
the recurrence relation~\eqref{15rec} reduces to a four-term relation of the form~\eqref{4term}.
It may be noted that the values $\epsilon=1$, $-11$, $2i$ and $-2i$ are precisely the values that
make one of the sides of~\eqref{15sq} a square.
The terms in the sequences in these cases will be denoted by $T_{15A}(n)$,  $T_{15B}(n)$,  $T_{15C}(n)$
and $T_{15\overline{C}}(n)$, respectively. The sequence~$T_{15A}(n)$ was studied in~\cite{cooperye}. The other three
sequences are (until now) unpublished; I found them in~2015 and presented them at the NTDU conference in 2015,~\cite{cooperNTDU}.

The values of the contants $a$, $b$, $c$, $d$ and $e$ are given in Table~\ref{table5}
under the heading level~15.
The first few terms in the sequences corresponding to $\epsilon\in\left\{1,-11,2i\right\}$
are listed in Table~\ref{table15}.
The case $\epsilon=-2i$ is not listed in Table~\ref{table15} as the corresponding data
will just be the complex conjugate of the data obtained for~$\epsilon=2i$.
For each sequence, the single initial condition $T_\epsilon(0)=1$ is enough to start the recurrence.
From~\eqref{Xdef} we have
$$
Z_\epsilon X_\epsilon = \eta_1\eta_3\eta_5\eta_{15}
$$
is independent of $\epsilon$, 
and on using~\eqref{Xdef1} it follows further that
$$
\sum_{n=0}^\infty T_\epsilon(n) \left(\frac{w}{1+\epsilon w-w^2}\right)^{n+1}
= w+2w^2+11w^3+72w^4+545w^5+4450w^6+\cdots
$$
is also independent of $\epsilon$.
In particular, the generating functions for the four sequences $\left\{T_{\epsilon}(n)\right\}$ for
$\epsilon\in\left\{1,-11,\pm 2i\right\}$ are related as follows.
\begin{theorem}
The following identity holds, in a neighborhood of $w=0$:
\begin{align*}
\sum_{n=0}^\infty T_{15A}(n)\left(\frac{w}{1+w-w^2}\right)^{n+1}
&=\sum_{n=0}^\infty T_{15B}(n)\left(\frac{w}{1-11w-w^2}\right)^{n+1} \\
&=\sum_{n=0}^\infty T_{15C}(n)\left(\frac{w}{1+2iw-w^2}\right)^{n+1} \\
&=\sum_{n=0}^\infty T_{15\overline{C}}(n)\left(\frac{w}{1-2iw-w^2}\right)^{n+1}.
\end{align*}
\end{theorem}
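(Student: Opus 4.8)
The plan is to derive the theorem directly from the $\epsilon$-independence of the product $Z_\epsilon X_\epsilon$ that was recorded just above the statement. The four generating functions appearing in the claim are, in fact, nothing other than this single product re-expanded in the variable $w=w_{+15}$ for the four distinguished values $\epsilon\in\{1,-11,2i,-2i\}$.

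First I would solve the defining relation \eqref{Xdef} for $X_\epsilon$ in terms of $w=w_{+15}$. Writing $1/X_\epsilon = 1/w - w + \epsilon = (1+\epsilon w - w^2)/w$ and inverting gives
$$
X_\epsilon = \frac{w}{1+\epsilon w - w^2}.
$$
Next, multiplying the defining expansion $Z_\epsilon = \sum_{n=0}^\infty T_\epsilon(n)X_\epsilon^n$ through by $X_\epsilon$ and substituting the previous display yields
$$
Z_\epsilon X_\epsilon = \sum_{n=0}^\infty T_\epsilon(n)X_\epsilon^{n+1} = \sum_{n=0}^\infty T_\epsilon(n)\left(\frac{w}{1+\epsilon w - w^2}\right)^{n+1}.
$$
The decisive step is the observation, part of \eqref{Xdef}, that the left-hand side equals the fixed weight-two form $\eta_1\eta_3\eta_5\eta_{15}$, which carries no dependence on $\epsilon$. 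Hence, for every admissible $\epsilon$, the right-hand series represents one and the same function. Specialising to $\epsilon = 1, -11, 2i, -2i$, which by construction label the sequences $T_{15A}$, $T_{15B}$, $T_{15C}$ and $T_{15\overline{C}}$ respectively, produces the four equal generating functions asserted in the theorem.

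There is no genuine obstacle here beyond bookkeeping, since the essential content is carried by the change of variable in \eqref{Xdef} and by the $\epsilon$-independence of $X_\epsilon Z_\epsilon$, both already established. The only point requiring care is the validity of the re-expansion in a neighbourhood of $w=0$: because $X_\epsilon = w + O(w^2)$ is an invertible power series there for each of the four finite values of $\epsilon$, composing with the power series for $Z_\epsilon$ and re-expanding in powers of $w$ is legitimate, and the common value of all four sums is the $w$-expansion $w+2w^2+11w^3+72w^4+\cdots$ displayed before the statement.
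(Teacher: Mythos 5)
Your proposal is correct and is essentially identical to the paper's own argument: the paper likewise observes that $X_\epsilon Z_\epsilon=\eta_1\eta_3\eta_5\eta_{15}$ is independent of $\epsilon$ and that \eqref{Xdef} gives $X_\epsilon=w/(1+\epsilon w-w^2)$, so that $\sum_n T_\epsilon(n)X_\epsilon^{n+1}$ is the same $w$-series for all four values of $\epsilon$. (You even silently correct the paper's citation of \eqref{Xdef1}, which should be \eqref{Xdef}, and add the useful remark about invertibility of $X_\epsilon=w+O(w^2)$ near $w=0$.)
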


We end this section with a conjecture that is the analogue of Conjecture~\ref{conj14}.
\begin{conjecture}
The sequences $\left\{T_{15A}(n)\right\}$ and $\left\{T_{15B}(n)\right\}$ satisfy the Lucas
congruence for every prime $p$. The sequences $\left\{T_{15C}(n)\right\}$
and $\left\{T_{15\overline{C}}(n)\right\}$ satisfy the Lucas congruence for
a prime $p$ if and only if $p=2$ or $p$ is congruent to~$1$ modulo~$4$.
\end{conjecture}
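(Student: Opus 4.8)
The plan is to reduce the entire conjecture to the framework of Malik and Straub~\cite{malikstraub}, which converts a representation of a sequence as a multiple sum of products of binomial coefficients with arguments linear in the summation indices into a Lucas congruence valid for every prime. The first and main task is therefore to find such binomial-sum representations for the four level~15 sequences, in the spirit of the three formulas recorded earlier for $T_{14A}(n)$. For the integer-valued sequences $\left\{T_{15A}(n)\right\}$ and $\left\{T_{15B}(n)\right\}$ I would search for a formula of the shape $\sum_{j,k}\prod\binom{\cdot}{\cdot}$, guided by the diagonal-coefficient viewpoint of the introduction and certifying each candidate against the four-term recurrence~\eqref{4term} by creative telescoping; once such a formula is in hand, the theorem of Malik and Straub delivers the Lucas congruence for every prime $p$ directly.

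The novelty lies in the two $\mathbb{Z}[i]$-valued sequences, and here I would exploit the observation that $T_{15\overline{C}}(n)$ is the complex conjugate of $T_{15C}(n)$ together with the arithmetic of $i$ modulo $p$. Note that $p=2$ or $p\equiv 1\pmod 4$ is precisely the condition that $-1$ be a square modulo $p$, equivalently that there exist $\iota\in\mathbb{Z}$ with $\iota^2\equiv -1\pmod p$, equivalently that the rational prime $p$ not remain inert in $\mathbb{Z}[i]$. The plan is to produce a formula $T_{15C}(n)=\sum_{j,k} c(n,j,k)\,i^{\,d(n,j,k)}$ in which $c$ is an integer multiple of a product of binomial coefficients with linear arguments and $d$ is a linear form. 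For the easy (``if'') direction, when $p=2$ or $p\equiv 1\pmod 4$ one reduces this formula modulo $p$ by substituting $i\mapsto\iota$; the result is an honest $\mathbb{F}_p$-valued binomial sum, and the Malik--Straub theorem again applies to yield the congruence in $\mathbb{Z}[i]/(p)$.

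For the harder (``only if'') direction the mechanism is Frobenius. When $p\equiv 3\pmod 4$ the ring $\mathbb{Z}[i]/(p)$ is the field $\mathbb{F}_{p^2}$, on which the $p$-power map is the nontrivial automorphism fixing $\mathbb{F}_p$; since $(p-1)/2$ is odd in this case, $i^p\equiv -i\pmod p$, so the $p$-power map acts as complex conjugation. I would show that the same digit-wise decomposition that produces the Lucas product also introduces a Frobenius twist on the factor $i^{\,d}$: passing the leading base-$p$ digit through the binomial coefficients replaces $i$ by $i^p\equiv -i$, so the right-hand product computes $T_{15\overline{C}}$ rather than $T_{15C}$. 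Since the two sequences differ already at $n=1$ (from~\eqref{15rec} one finds $T_{15C}(1)=2+2i$ and $T_{15\overline{C}}(1)=2-2i$), the Lucas congruence must fail; indeed, exhibiting a single explicit odd $p\equiv 3\pmod 4$ with $T_{15C}(p)\not\equiv T_{15C}(0)\,T_{15C}(1)\pmod p$ settles the ``only if'' rigorously, since $2+2i\equiv 2-2i\pmod p$ forces $p\mid 4i$ and hence $p=2$.

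The main obstacle is the very first step. At present no binomial-sum representation is known even for some of the integer sequences defined by four-term recurrences (the level~11 case is explicitly noted as open), and the complex-valued nature of $T_{15C}$ makes the search harder still, since the target formula must carry the factor $i^{\,d}$ in a way compatible with the digit decomposition. I would attack this by combining a computer search over small multi-binomial templates, verified by creative telescoping against~\eqref{4term}, with an attempt to transport the known level~14 formulas through the structural generating-function identity relating all four level~15 sequences. Should a suitable representation resist discovery, the fallback is to prove the congruences directly from the modular parameterisation in Theorem~\ref{t15}, analysing the mod-$p$ behaviour of the Hauptmodul $X_\epsilon$ and the weight-two form $Z_\epsilon$ and tracking how the choice $\epsilon=\pm 2i$ interacts with the splitting of $p$ in $\mathbb{Z}[i]$.
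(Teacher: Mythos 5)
The statement you are addressing is not proved in the paper: it is stated as a conjecture, supported only by numerical evidence, and the author explicitly records that the key ingredient your plan depends on is unavailable. In Section~10 the paper notes that ``binomial sums are not known for the other sequences in Table~\ref{table5}'' (which includes all four level~15 sequences), and the analogous level~11 discussion states that a binomial-sum representation is unknown and that finding one is precisely what would make the Malik--Straub machinery applicable. Your proposal correctly identifies this as ``the main obstacle,'' but it does not overcome it: every subsequent step --- the Malik--Straub reduction for $T_{15A}$ and $T_{15B}$, the substitution $i\mapsto\iota$ for the ``if'' direction, and the Frobenius-twist mechanism for the ``only if'' direction --- is contingent on a formula $\sum_{j,k}c(n,j,k)\,i^{d(n,j,k)}$ whose existence is not established and for which no candidate is exhibited. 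As written, this is a research programme, not a proof, and it leaves the conjecture exactly as open as the paper does.

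Two further points deserve attention even granting the missing representations. First, the Malik--Straub results are not a black box that converts an arbitrary multi-binomial sum into a Lucas congruence; their arguments impose structural conditions on the linear forms appearing in the binomial coefficients, and compatibility of the extra factor $i^{d(n,j,k)}$ with the digit-wise decomposition is an additional hypothesis you would need to verify, not assume. Second, your ``only if'' argument conflates two different claims: what you can extract cleanly is that \emph{if} the Lucas congruence held for a prime $p\equiv 3\pmod 4$, then $T_{15C}(p)\equiv T_{15C}(1)T_{15C}(0)\pmod p$; to derive a contradiction you assert $T_{15C}(p)\equiv T_{15\overline{C}}(1)\pmod p$ via a Frobenius twist, but that congruence is itself an unproved statement about the sequence (it is essentially a supercongruence-type assertion of the kind the paper only conjectures in Section~11). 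The final observation that $2+2i\not\equiv 2-2i\pmod p$ for odd $p$ is correct but does not close the gap. A genuinely new idea --- either explicit binomial representations certified against the recurrence, or a direct mod-$p$ analysis of the modular parameterisation in Theorem~\ref{t15} --- would be required before any part of this conjecture could be considered proved.
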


\begin{table}
\caption{The first few terms of the level $15$ sequences $\left\{T_\epsilon(n)\right\}$}
\label{table15}
{\renewcommand{\arraystretch}{1.4}
{\begin{tabular}{|c|c|c|c|}
\hline
$n$ & $T_{15A}(n):\; \epsilon=1$ & $T_{15B}(n): \; \epsilon=-11$ & $T_{15C}(n): \; \epsilon=2i$  \\
\hline
0 & 1 & $1$ & $1$  \\ 
1 & 3 & $-9\;$ & $2 + 2 i$  \\  
2 & 15 &  $87$ &$6 + 8 i$  \\ 
3 & 105 & $-867\;$ & $44 + 52 i$  \\ 
4 & 855 & $8775$ & $290 + 480 i$  \\ 
5 & 7533 & $-89559\;$ &  $1612 + 4372 i$  \\ 
6 & 69909 & $918141$ & $7140 + 39568 i$  \\ 
7 & 673515 & $-9432873\;$ & $2536 + 361688 i$  \\ 
8 & 6673095 & $96984423$ &  $-559166 + 3303552 i$  \\ 
9 & 67565445 & $-997061295\;$ &  $-10693900 + 29823140 i$  \\ 
10 & $\;\;696024945\;\;$ & $\;\;10245169737\;\;$ &  $\;\;-151732284 + 264070928 i\;\;$  \\
\hline
\end{tabular}}}
\end{table}

%---------------------------------------------------

\section{Levels 20, 21 and 24, and a general conjecture}
\label{level202124}
In this section we will consider the remaining entries in Table~\ref{1to35} for which the coefficients $T(n)$ satisfy
a four-term recurrence relation, namely levels 20, 21 and~24.

First, by applying Theorem~\ref{interesting} to the data in Tables~\ref{1to35} and~\ref{1to35b} for levels 20 and 21,
we obtain the four-term recurrence relations 
\begin{align*}
(n+1)^3T(n+1)
&=4(2n+1)(2n^2+2n+1)T(n) \\
&\quad -16n(4n^2+1)T(n-1) +8(2n-1)^3T(n-2)
\intertext{and}
(n+1)^3T(n+1)
&=-(2n+1)(n^2+n+1)T(n) \\
&\quad +n(35n^2+12)T(n-1) +6(2n-1)(3n-1)(3n-2)T(n-2),
\end{align*}
respectively.
Neither of these recurrence relations are self-starting in the sense that specifying the single initial condition $T(0)=1$
is not enough to start the recurrence; in order to handle the cases $n=0$ and $n=1$ it is
also necessary to stipulate that $T(-1)=0$ and $T(-2)=0$.
Moreover, no Lucas congruences for these sequences have been found. 

On the other hand, applying Theorem~\ref{interesting} to the data in Tables~\ref{1to35} and~\ref{1to35b} for level~24
gives the recurrence relation
\begin{align}
(n+1)^3T(n+1)\label{24rr}
&=2(2n+1)(2n^2+2n+1)T(n) \\
&\quad +4n(4n^2+1)T(n-1) -64n(n-1)(2n-1)T(n-2), \nonumber
\end{align}
and the single initial condition $T(0)=1$ is enough to start the recurrence relation.
That is because the coefficient of $T(n-1)$ contains a factor of $n$, and the coefficient of $T(n-2)$ contains 
a factor of $n(n-1)$.
Furthermore, numerical evidence suggests:
\begin{conjecture}
The sequence defined by the recurrence relation~\eqref{24rr} and initial condition~$T(0)=1$
satisfies a Lucas congruence for all primes~$p$.
\end{conjecture}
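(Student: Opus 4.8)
The plan is to use the binomial-sum strategy of Malik and Straub~\cite{malikstraub}, which is available here because, in contrast to the level~$11$ case, Table~\ref{1to35} records an explicit closed form for the level~$24$ sequence,
$$
T(n)=\sum_{j}\binom{n}{2j}\binom{2j}{j}^{2}\binom{2n-4j}{n-2j}.
$$
First I would confirm that this sum is the sequence of the conjecture. As in the discussion after~\eqref{a1}, creative telescoping produces a recurrence satisfied by the sum; one checks that it coincides with~\eqref{24rr} and that both have $T(0)=1$. Since~\eqref{24rr} is self-starting, the two sequences then agree, and the problem reduces to a purely combinatorial congruence for the above sum.

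The difficulty is that the central binomial coefficients $\binom{2j}{j}$ and $\binom{2n-4j}{n-2j}=\binom{2(n-2j)}{n-2j}$ do not individually obey a clean digitwise Lucas congruence, because of carries in base~$p$; consequently the standard term-by-term factorisation criterion does not apply directly. To get around this I would first replace the summand by genuinely digit-clean pieces. Combining $\binom{n}{2j}\binom{2j}{j}=\binom{n}{j,\,j,\,n-2j}$ into a trinomial coefficient and expanding the two remaining central binomials by Vandermonde, $\binom{2j}{j}=\sum_{a}\binom{j}{a}^{2}$ and $\binom{2(n-2j)}{n-2j}=\sum_{c}\binom{n-2j}{c}^{2}$, turns the single sum into
$$
T(n)=\sum_{j,a,c}\binom{n}{j,\,j,\,n-2j}\binom{j}{a}^{2}\binom{n-2j}{c}^{2}.
$$

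Next I would prove a multivariate Lucas property for the new summand $g(n,j,a,c)$. With base-$p$ digits $n=\sum_t n_t p^t$ and similarly for $j,a,c$, the generalised Lucas congruence for multinomials gives $\binom{n}{j,\,j,\,n-2j}\equiv\prod_t\binom{n_t}{j_t,\,j_t,\,n_t-2j_t}\pmod p$, each factor read as $0$ once $n_t-2j_t<0$. The crucial point is that this factor is nonzero exactly when $2j_t\le n_t$ for every~$t$, and on precisely that locus the subtraction $n-2j$ is carry-free, with digits $n_t-2j_t$; hence the ordinary binomials $\binom{j}{a}$ and $\binom{n-2j}{c}$ also factor digitwise by Lucas' theorem, and $g(n,j,a,c)\equiv\prod_t g(n_t,j_t,a_t,c_t)\pmod p$ holds identically, both sides vanishing outside the common support. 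Summing over $j,a,c$, whose digit vectors range independently over $\{0,\dots,p-1\}^{3}$ at each place, the sum factors as $\prod_t\bigl(\sum_{j_t,a_t,c_t}g(n_t,j_t,a_t,c_t)\bigr)$, and each inner digit-sum is exactly $T(n_t)$ by the triple-sum formula applied to the single digit~$n_t$. This gives $T(n)\equiv\prod_t T(n_t)\pmod p$, the required Lucas congruence.

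The main obstacle I expect is precisely the carry bookkeeping in the previous step: making rigorous that the trinomial's support forces the digits of $n-2j$ to be $n_t-2j_t$, and confirming that expanding the central binomials neither loses nor duplicates surviving terms modulo~$p$. The prime $p=2$ merits a short separate check, since there $2j_t\le n_t\le1$ forces $j_t=a_t=c_t=0$ at every place; one verifies that the digit sums collapse consistently, in agreement with $T(1)=2\equiv0\pmod2$.
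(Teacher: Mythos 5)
The statement you are addressing is presented in the paper only as a conjecture supported by numerical evidence; the paper offers no proof, and in the analogous level~$11$ discussion it merely suggests that one \emph{could} try to ``find a formula for $T(n)$ as a sum of binomial coefficients and then apply the methods of Malik and Straub.'' Your proposal actually carries out that programme for level~$24$, where (unlike level~$11$) Table~\ref{1to35} does supply the closed form $T(n)=\sum_j\binom{n}{2j}\binom{2j}{j}^2\binom{2n-4j}{n-2j}$, and as far as I can check your argument is complete and correct. The two nontrivial points both hold: (i) the re-expansion $T(n)=\sum_{j,a,c}\binom{n}{j,\,j,\,n-2j}\binom{j}{a}^2\binom{n-2j}{c}^2$ is an exact integer identity, and the new summand $g(n,j,a,c)$ genuinely factors digitwise modulo $p$ --- on the support $2j_t\le n_t$ (all $t$) the addition $j+j+(n-2j)=n$ is carry-free, so the Dickson--Lucas factorisation of the trinomial and the ordinary Lucas factorisations of $\binom{j}{a}$ and $\binom{n-2j}{c}$ (whose top has digits $n_t-2j_t$) multiply together, while off the support the trinomial is divisible by $p$ and the digit product contains a vanishing factor, so both sides are $\equiv 0$; (ii) for a single digit $m=n_t<p$ all surviving indices $j,a,c$ lie in $\{0,\dots,p-1\}$, so each inner digit-sum really is $T(n_t)$, and the outer sum factors. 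This is exactly the Lucas-like-summand technique of \cite{malikstraub} and \cite{gorodetsky}; your Vandermonde step is what converts the carry-prone central binomials into digit-clean pieces, and it is the ingredient the paper stops short of. Two small caveats: the identification of the binomial sum with the sequence defined by \eqref{24rr} does need the creative-telescoping verification you describe (it checks out for small $n$, e.g.\ $T(0),\dots,T(3)=1,2,10,44$, and is implicit in the references \cite{alchemy}, \cite{yeEmail}); and your parenthetical claim that $p=2$ forces $c_t=0$ is not quite right --- $c_t$ may equal $n_t$ --- but this is harmless since the general argument needs no special case at $p=2$ and correctly yields $T(n)\equiv 0\pmod 2$ for all $n\ge 1$. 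In short, you have supplied a proof of something the paper leaves open; it would be worth writing it up carefully.
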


Now that we have investigated all of the four-term recurrence relations in Table~\ref{1to35},
it appears that the sequences which satisfy Lucas congruences are the self-starting sequences.
By Theorem~\ref{interesting}, the sequence $T(n)$ satisfies a four-term recurrence relation
if the corresponding polynomials $B^2$ and $H$ are of degree 3, i.e.,
$$
B^2 = 1+g_1X+g_2X^2+g_3X^3\quad\text{and}\quad H=h_1X+h_2X^2+h_3X^3
$$
and the recurrence relation is given by
\begin{align*}
&(n+1)^3T(n+1) \\
&\qquad +\left(n+\mbox{$\frac12$}\right)\left(g_1n^2+g_1n-2h_1\right)T(n) \\
&\qquad\qquad + n\left(g_2n^2-g_2-2h_2\right)T(n-1) \\
&\qquad\qquad\qquad+\left(n-\mbox{$\frac12$}\right)\left(g_3n^2-g_3n-2g_3-2h_3\right)T(n-2)=0.
\end{align*}
The sequence will be self-starting provided the single initial condition $T(0)=1$ can be used to start the recurrence,
and this will be the case when $n$ divides the coefficient of $T(n-1)$ and when $n(n-1)$ divides the coefficient of $T(n-2)$.
The first condition is already satisfied, and the second condition will be satisfied if and only if $g_3=-h_3$, in which
case the recurrence relation becomes 
\begin{align*}
&(n+1)^3T(n+1) \\
&\qquad +\left(n+\mbox{$\frac12$}\right)\left(g_1n^2+g_1n-2h_1\right)T(n) \\
&\qquad\qquad + n\left(g_2n^2-g_2-2h_2\right)T(n-1) \\
&\qquad\qquad\qquad+g_3n(n-1)\left(n-\mbox{$\frac12$}\right)T(n-2)=0,
\end{align*}
or equivalently
\begin{align}
(n+1)^3T(n+1) =(2n+1)(an^2+an+b)&T(n)  +n(cn^2+d)T(n-1) \label{4tr} \\
& +en(2n-1)(n-1)T(n-2), \nonumber 
\end{align}
where 
$$
a=-\frac{g_1}{2}, \quad b=h_1,\quad c=-{g_2},\quad d=g_2+2h_2,\quad e=-\frac{g_3}{2} = \frac{h_3}{2}.
$$
Table~\ref{table5} contains the values of the parameters $(a,b,c,d,e)$ for which the sequence
defined by the recurrence relation~\eqref{4tr} and initial condition~$T(0)=1$ is conjectured
to satisfy Lucas congruences.
\begin{table}
\caption{Parameters for the four-term recurrence relation
\begin{align*} \hspace{-3cm} (n+1)^3&T(n+1) =(2n+1)(an^2+an+b)T(n)\\
& +n(cn^2+d)T(n-1) +en(2n-1)(n-1)T(n-2) \hspace{1cm}
\end{align*}
 and initial condition $T(0)=1$
that produce sequences which are conjectured to satisfy Lucas congruences.}
\vspace{0.5cm}
\label{table5}
{\renewcommand{\arraystretch}{2.4}
{\renewcommand{\arraystretch}{2.4}
{\renewcommand{\arraystretch}{2.9}
\centering
{\resizebox{12.5cm}{!}{
{\begin{tabular}{|c|c||c|c|c|c|c|}
\hline
Level & Name & $a$ & $b$ & $c$ & $d$ & $e$ \\
\hline
\hline
$11$ & $T_{11}(n)$ & $10$ & $4$ & $-56$ & $-8$ & $22$  \\
\hline\hline
 \multirow{3}{*}&$T_{14A}(n)$ & $3$ & $1$ & $47$ & $4$ & $14$ \\
\cline{2-7}
{$14$} &$T_{14B}(n)$ & $11$ & $5$  & $-121$ & $-20$ & $98$ \\
\cline{2-7}
&$T_{14C}(n)$ &  $8\sqrt{2}-7$ & $4\sqrt{2}-4$ & $168\sqrt{2}-205$ & $32\sqrt{2}-43$ & $28(11\sqrt{2}-16)$ \\
\cline{2-7}
&$T_{14\overline{C}}(n)$ &  $-8\sqrt{2}-7$ & $-4\sqrt{2}-4$ & $-168\sqrt{2}-205$ & $-32\sqrt{2}-43$ & $-28(11\sqrt{2}+16)$ \\
\hline \hline
 \multirow{3}{*}&$T_{15A}(n)$ & $7$ & $3$ & $-29$ & $-4$ & $30$ \\
\cline{2-7}
{$15$}&$T_{15B}(n)$ & $-17$ & $-9$  & $-389$ & $-76$ & $-750$ \\
\cline{2-7}
&$T_{15C}(n)$ & $5+4i$ & $2+2i$ & $31-60i$ & $5-8i$ & $-40-30i$ \\
\cline{2-7}
&$T_{15\overline{C}}(n)$ & $5-4i$ & $2-2i$ & $31+60i$ & $5+8i$ & $-40+30i$ \\
\hline \hline
$24$ &$T_{24}(n)$ &  $4$ & $2$ & $16$ & $4$ & $-64$  \\
\hline\hline
\end{tabular}}}}}}}
\end{table}

%---------------------------------------------------

\section{Asymptotic expansions}
An asymptotic formula for the Ap{\'e}ry numbers was mentioned in~\eqref{cohen}.
We seek analogous asymptotic expansions for sequences defined by the recurrence relation~\eqref{generalT} in Theorem~\ref{interesting}, viz.
\begin{equation}
\label{gT}
(n+1)\sum_{j=0}^k g_j(n+1-{\textstyle \frac{j}{2}})(n+1-j)T(n+1-j)
=2\sum_{j=1}^k h_j(n+1-{\textstyle \frac{j}{2}})T(n+1-j),
\end{equation}
where $g_j$ and $h_j$ are constants and $g_0=1$.
We start with the ansatz
\begin{equation}
\label{ansatz}
T(n) \sim C \,n^\alpha R^n \left(1+\frac{b_1}{n} + \frac{b_2}{n^2}+\cdots\right)
\end{equation}
and substitute in~\eqref{gT}. To compute the expansion, note that for $j=0,1,2,\ldots$ we have
\begin{align*}
\frac{T(n+1-j)}{n^\alpha R^n}
&= \frac{(n+1-j)^\alpha}{n^\alpha} \frac{R^{n+1-j}}{R^{n+1}} \left(1+\frac{b_1}{n+1-j}+\frac{b_2}{(n+1-j)^2}+\cdots\right) \\
&=\frac{1}{R^{j}} \left(1-\frac{\alpha(j-1)}{n}+\frac{\alpha(\alpha-1)}{2} \frac{(j-1)^2}{n^2}+\cdots\right) \\
&\qquad \qquad \times \left(1+\frac{b_1}{n}+\frac{(j-1)b_1+b_2}{n^2}+\cdots\right).
\end{align*}
Now divide~\eqref{gT} by $n^{\alpha} R^{n+1} n^2(n+1)$ and substitute the above expansion to obtain
\pagebreak[4] %%%% Used to prevent awkward page break in the middle of the following equation
\begin{align*}
&\sum_{j=0}^k \frac{g_j}{R^{j}} \left(1-\frac{j-1}{n}\right)\left(1-\frac{j-2}{2n}\right)\left(1-\frac{\alpha(j-1)}{n} + \frac{\alpha(\alpha-1)}{2}\frac{(j-1)^2}{n^2}+\cdots\right) \\
&\qquad\qquad\qquad \times \left(1+\frac{b_1}{n}+\frac{(j-1)b_1+b_2}{n^2}+\cdots\right) \\
&= 2\sum_{j=1}^k \frac{h_j}{R^{j}} \left(\frac{1}{n^2} + \cdots \right),
\end{align*}
where we have expanded as far as terms involving $n^{-2}$.

Equating coefficients of $n^0$ gives
$$
\sum_{j=0}^k \frac{g_j}{R^{j}}=0.
$$
Hence, $R^{-1}$ is a root of the polynomial $\displaystyle{G(x) = 1+\sum_{j=1}^k g_j x^j}$. 

Next, equating coefficients of $n^{-1}$ gives
$$
\sum_{j=0}^k \frac{g_j}{R^{j}} \left( -(j-1)-\frac{(j-1)}{2} - \alpha(j-1)\right) + b_1\sum_{j=0}^k \frac{g_j}{R^{j}}=0.
$$
On using the fact that $R^{-1}$ is a root of $G(x)$ and simplifying, we obtain
$$
\sum_{j=0}^k \frac{g_j}{R^{j}} \left(-\frac{3j}{2}\right) = \alpha \sum_{j=0}^k \frac{jg_j}{R^{j}},\quad\text{i.e.,}\quad -\frac32G'(R^{-1}) = \alpha G'(R^{-1}).
$$
If we assume further that $R^{-1}$ is a simple root of $G$, then it follows that $\alpha = -3/2$.

Setting $\alpha = -3/2$ and equating coefficients of $n^{-2}$ gives
$$
\sum_{j=0}^k \frac{g_j}{R^{j}} \left(\frac{j^2}{8} + b_1j-\frac{b_1}{2}+b_2-\frac18\right) = 2\sum_{j=1}^k \frac{h_j}{R^{j}}.
$$
Once again we use the fact that $R^{-1}$ a root of $G(x)$ to simplify and obtain
$$
b_1\sum_{j=0}^k \frac{jg_j}{R^j} + \frac18\sum_{j=0}^k \frac{j^2g_j}{R^j} = 2\sum_{j=1}^k \frac{h_j}{R^j},
$$
and this can be solved to give
$$
b_1 = \frac{16H(R^{-1}) - G^{\ast\ast}(R^{-1})}{8G^\ast(R^{-1})},
$$
where
\begin{equation}
\label{ast}
H(x) = \sum_{j=1}^k h_j x^j, \quad G^\ast(x) = x\frac{\ud}{\ud x}G(x) \quad \text{and}\quad G^{\ast\ast}(x) = x\frac{\ud}{\ud x}\left(x \frac{\ud}{\ud x}G(x)\right).
\end{equation}
If needed, additional coefficients $b_2$, $b_3,\ldots$ could be computed by expanding further and equating coefficients of $n^{-3}$, $n^{-4}, \ldots$.

It is well-known that the value of $C$ in~\eqref{ansatz} cannot be determined solely from the recurrence relation.
Instead, forward differences can be used to determine the value of~$C$ accurately, e.g., see~\cite[p. 954]{zagierasymptotic} for a description of the method.
Then the PSLQ algorithm can be used to conjecture an exact value of the constant.
Alternatively, if additional facts are known about the sequence, for example if
a representation as a binomial sum is available, then it may be possible to compute the value of~$C$ analytically, e.g., see Hirschhorn~\cite{mdh1}, \cite{mdh3}.
In principle, Hirschhorn's approach could be applied to the sequences $T_{14A}$ and $T_{24}$, but binomial sums are not known for the other sequences in Table~\ref{table5} so
the values of~$C$ are determined numerically and hence are conjectural.

The calculations and discussions above can be used in conjunction with Theorem~\ref{interesting} and summarised as follows.
\begin{theorem}
\label{generalasymptotic}
Suppose $Z=Z(X)$ is analytic in a neighbourhood of $X=0$ with $Z(0)=Z_0 \neq 0$, and satisfies the second order non-linear
differential equation
$$
\uD^2Z - \frac{1}{2Z}\left(\uD Z\right)^2=HZ,
$$
where $\uD$ is the differential operator 
$$
\uD = BX \frac{\ud}{\ud X},
$$
where
$$
B=\sqrt{G}\quad\text{and}\quad G = 1+\sum_{j=1}^k g_jX^j,
$$
and
$$
H=\sum_{j=1}^k h_jX^j.
$$
Suppose $G(x)$ has a unique root $r_0$ of smallest modulus and assume that $r_0$ is simple.
\\
Then the coefficients $T(n)$ in the power series expansion
$$
Z=\sum_{n=0}^\infty T(n)X^n
$$
have an asymptotic expansion given by
\begin{equation}
\label{ansatzA}
T(n) \sim C \, n^{-3/2}\, R^n\left(1+\frac{b_1}{n} + O\left(\frac{1}{n^2}\right)\right)
\end{equation}
where $R = 1/r_0$ and $\displaystyle{b_1 = \frac{16H(r_0) - G^{\ast\ast}(r_0)}{8G^\ast(r_0)}},$ and $G^{\ast}$ and $G^{\ast\ast}$ are defined by~\eqref{ast}.
\end{theorem}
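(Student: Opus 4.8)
The plan is to read off the recurrence that $T(n)$ satisfies directly from Theorem~\ref{interesting}, namely the linear difference equation~\eqref{gT} with polynomial coefficients whose leading term is $(n+1)^3T(n+1)$, and then to extract the asymptotics by inserting the ansatz~\eqref{ansatz} and matching successive powers of $n$. This is exactly the computation carried out in the paragraphs preceding the theorem, so the bulk of the proof consists of assembling those calculations and recording the hypotheses under which each step is legitimate.

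First I would substitute $T(n)\sim C\,n^{\alpha}R^{n}(1+b_1/n+\cdots)$ into~\eqref{gT}, divide through by $n^{\alpha}R^{n+1}n^2(n+1)$, and expand each ratio $T(n+1-j)/(n^{\alpha}R^{n})$ to second order in $1/n$. Equating the coefficient of $n^{0}$ gives $\sum_{j=0}^{k}g_jR^{-j}=0$, i.e.\ $G(R^{-1})=0$; taking $R=1/r_0$ selects the root $r_0$ of smallest modulus, which by hypothesis is the unique dominant value. Equating the coefficient of $n^{-1}$ and using $G(r_0)=0$ collapses everything to $-\tfrac32 G'(r_0)=\alpha\,G'(r_0)$, and simplicity of $r_0$ forces $G'(r_0)\neq 0$, hence $\alpha=-3/2$. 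Finally, setting $\alpha=-3/2$ and equating the coefficient of $n^{-2}$, again using $G(r_0)=0$ to kill the $b_2$ and constant contributions, leaves a single linear equation for $b_1$ with solution $b_1=(16H(r_0)-G^{\ast\ast}(r_0))/(8G^{\ast}(r_0))$, where $G^{\ast},G^{\ast\ast}$ are as in~\eqref{ast}; here $G^{\ast}(r_0)=r_0G'(r_0)\neq 0$ again uses simplicity together with $r_0\neq 0$.

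The hard part is that coefficient matching only produces a \emph{formal} solution of~\eqref{gT}: it shows that \emph{if} $T(n)$ admits an expansion of the form~\eqref{ansatzA} then the parameters must be as stated, but it does not by itself guarantee that the particular analytic solution $Z$ with $Z(0)=Z_0$ realises this expansion rather than a combination involving the subdominant characteristic values $1/r$ attached to the other roots $r$ of $G$. To close this gap I would invoke the Poincar\'e--Perron--Birkhoff--Trjitzinsky theory for linear difference equations with polynomial coefficients: after dividing by the leading coefficient $(n+1)^3$ the coefficient of $T(n+1-j)$ tends to $g_j$, so the limiting characteristic roots are precisely the reciprocals $1/r$ of the roots of $G$. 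The hypothesis that $r_0$ is the unique root of smallest modulus makes $R=1/r_0$ the strictly dominant, simple characteristic value, whence there is a one-dimensional family of solutions behaving like $R^{n}n^{-3/2}(1+b_1/n+\cdots)$ while all others are exponentially smaller. One then checks that $Z$ lies in this dominant family.

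An alternative and in some ways cleaner route, which I would also use as a cross-check, is singularity analysis of the generating function $\sum T(n)X^{n}=Z(X)$. Its radius of convergence equals $|r_0|$, and near $X=r_0$ the factor $B=\sqrt{G}$ acquires a branch point of square-root type, $B\sim\sqrt{-G'(r_0)(r_0-X)}$; a local study of the nonlinear equation~\eqref{dZdXa} at this regular-singular point shows that $Z$ has a $(1-X/r_0)^{1/2}$-type singularity, and the Flajolet--Odlyzko transfer theorem converts this into $T(n)\sim c\,r_0^{-n}n^{-3/2}$, the half-integer exponent $-3/2$ arising precisely because the singularity is of square-root rather than pole type. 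This simultaneously confirms that $Z$ is singular at $X=r_0$, hence genuinely in the dominant family, and shows that the remaining roots of $G$ give singularities farther from the origin that affect only the lower-order terms absorbed into the $O(1/n^{2})$.
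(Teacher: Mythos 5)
Your coefficient-matching computation is exactly the paper's proof: the paper derives Theorem~\ref{generalasymptotic} by substituting the ansatz~\eqref{ansatz} into the recurrence~\eqref{gT}, dividing by $n^{\alpha}R^{n+1}n^{2}(n+1)$, and equating coefficients of $n^{0}$, $n^{-1}$ and $n^{-2}$ to obtain $G(R^{-1})=0$, then $\alpha=-3/2$ (using simplicity of the root so that $G'(r_0)\neq 0$), and finally the stated formula for $b_1$; the theorem is then presented as a summary of that calculation. Where you go beyond the paper is in your last two paragraphs: the paper never addresses the point that coefficient matching only produces a formal asymptotic solution of the difference equation, and that one must still argue the particular analytic solution $Z$ with $Z(0)=Z_0$ actually lies in the dominant family attached to $R=1/r_0$ rather than being dominated by contributions from the other roots of $G$. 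Your appeal to the Poincar\'e--Perron--Birkhoff--Trjitzinsky theory, and the alternative route via singularity analysis of $Z(X)$ at $X=r_0$ (where $B=\sqrt{G}$ forces a square-root branch point, whence the exponent $-3/2$ by transfer theorems), supplies precisely the justification the paper leaves implicit. So your argument matches the source on the substance and is, if anything, more complete.
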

It is useful to do some checks of this formula with known results. By the discussion preceding and following Theorem~\ref{ASZ} we have corresponding to the Ap{\'e}ry numbers
$$
G(x) = 1+2\alpha x +(\alpha^2+4\gamma)x^2 \quad\text{and}\quad H(x) = (2\beta-\alpha)x - \frac{(\alpha^2+4\gamma)x^2}{2}
$$
with $(\alpha,\beta,\gamma) = (-17,-6,-72)$.
Solving $G(x)=0$ gives smallest root \mbox{$r_0= 17-12\sqrt{2}$} and hence
$$
R = \frac{1}{r_0} = 17+12\sqrt{2} = (1+\sqrt{2})^4.
$$
Furthermore, 
$$
b_1= \frac{16H(r_0) - G^{\ast\ast}(r_0)}{8G^\ast(r_0)}= \left.\frac{57-6x}{8x-136}\right|_{x=17-12\sqrt{2}} = -\frac34+\frac{15\sqrt{2}}{64}.
$$
Hence we obtain for the Ap{\'e}ry numbers 
$$
A(n) \sim \frac{C}{n^{3/2}} (1+2\sqrt{2})^4 \left(1+ \frac{15\sqrt{2}-48}{64n}+O\left(\frac{1}{n^2}\right)\right).
$$
The value of $C$ may be determined numerically and we find that $C=\frac{(1+2\sqrt{2})^2}{2^{9/4}\pi^{3/2}}$. These results are in agreement with Hirschhorn~\cite{mdh1}
and \cite{mdh2}, where coefficients of $1/n^2$ and $1/n^3$ are also determined.

As another example, consider the case of level~7. From Table~\ref{1to35b} we have that \mbox{$G(x) = (1+x)(1-27x)$} and $H(x) = 4x(1+3x)$.
The root of $G$ of smallest modulus is $r_0=1/27$ and therefore $R=27$. Moreover,
$$
b_1= \frac{16H(r_0) - G^{\ast\ast}(r_0)}{8G^\ast(r_0)}= \left.\frac{-45-150x}{104+216x}\right|_{x=1/27} = -\frac{65}{144}.
$$
Hence we obtain
$$
T(n) \sim \frac{C}{n^{3/2}} 27^n \left(1- \frac{65}{144n}+O\left(\frac{1}{n^2}\right)\right).
$$
Once again, the value of $C$ may be determined numerically and we find that \mbox{$C=\frac{\sqrt{27}}{4\pi^{3/2}}$}, and all of this is in agreement with
another result of Hirschhorn~\cite{mdh3}.

Theorem~\ref{generalasymptotic} can be used to determine the parameters $R$ and $b_1$ in the asymptotic expansions for the eight sequences in Table~\ref{table5}.
The polynomials $G$ and $H$  for the sequences $T_{11}$, $T_{14A}$, $T_{14B}$, $T_{15A}$, $T_{15B}$, and $T_{24}$ can be found in Table~\ref{1to35b}, and the polynomials for the
other sequences can be extracted from Theorems~\ref{thm14} and~\ref{t15}. The corresponding parameter~$C$ in each asymptotic expansion is determined numerically using finite differences.
The results are summarised in Table~\ref{table7}.
The result in the table for $T_{14B}(n)$ agrees with the formula given by Guillera and Zudilin~\cite[Ex. 6]{guilleratranslation}.

The point to note is that the dominant term in the asymptotic expansion can be determined, up to the constant~$C$, simply by inspecting the polynomials $G=B^2$ in Table~\ref{1to35b}.
And if a finer approximation such as the $1/n$ term is required, then the polynomial $H$ in Table~\ref{1to35b} comes into play.

\begin{table}
\caption{Parameters in the asymptotic expansion \\ $\displaystyle{T(n) \sim C \, \frac{ R^n}{n^{3/2}} \left(1+\frac{b_1}{n} + O\left(\frac{1}{n^2}\right)\right).}$} \label{table7}
{\renewcommand{\arraystretch}{2.4}
{\resizebox{12.5cm}{!}{
{\begin{tabular}{|c||c|c|c|}
\hline
Sequence & $R$ & $b_1$ & Conjectured value of $C$  \\
\hline
\multirow{2}{1.1cm}{$T_{11}(n)$} & $\displaystyle{1-\frac{20}{R}+\frac{56}{R^2}-\frac{44}{R^3}=0}$ 
& $\displaystyle{-\frac18\left(\frac{275R^{-2} - 184R^{-1} + 21}{33R^{-2} - 28R^{-1} + 5}\right)}$ & $\displaystyle \frac{xR}{\pi^{3/2}} = {0.3287368\ldots,\;}$ where   \\
 & so $\;R=16.82750\ldots$  & $=-0.3995791\cdots$ &$2^{10}\cdot 11 x^6 + 2^5\cdot 29x^2-11=0$ \\[1ex]
\hline
$T_{14A}(n),\;\epsilon=5$  & $5+4\sqrt{2}$ & $\qquad\displaystyle{-\frac{223}{196}+\frac{1025\sqrt{2}}{3136}}\qquad$ & $\displaystyle{\frac{5+4\sqrt{2}}{4\pi^{3/2}} \times \left(\frac{9\sqrt{2}-8}{14}\right)^{1/2}}$ \\
 $T_{14B}(n),\;\epsilon=9$ & $(1+2\sqrt{2})^2$  & $\displaystyle{-\frac{7}{4}+\frac{69\sqrt{2}}{64}}$ & $\displaystyle{\frac{(1+2\sqrt{2})^2}{4\pi^{3/2}} \times \left(\frac{8-5\sqrt{2}}{2}\right)^{1/2}}$ \\
$T_{14C}(n),\;\epsilon=\sqrt{32}$  & $8\sqrt{2}$ & $\displaystyle{\frac{443}{392}-\frac{60\sqrt{2}}{49}}$ &  $\displaystyle{\frac{8}{\pi^{3/2}} \times \left(\frac{8\sqrt{2}-11}{7}\right)^{1/2}}$ \\
$T_{14\overline{C}}(n),\;\epsilon=-\sqrt{32}$  & $-(1+2\sqrt{2})^2$ 
& $\displaystyle{\frac{339}{784}+\frac{201\sqrt{2}}{196}}$ &  $\displaystyle{\frac{(1+2\sqrt{2})^2}{\pi^{3/2}} \times \left(\frac{\sqrt{14}}{7}+\frac{\sqrt{7}}{14}\right)}$ \\[1ex]
\hline 
$T_{15A}(n),\;\epsilon=1$ & $12$ & $\displaystyle{-\frac{489}{1000}}$ & $\displaystyle{\frac{6\sqrt{3}}{5\pi^{3/2}}}$   \\
$T_{15B}(n),\;\epsilon=-11$  & $-12$  & $\displaystyle{\frac{3}{8}}$ & $\displaystyle{\frac{12\sqrt{3}}{\pi^{3/2}}}$  \\
$T_{15C}(n),\;\epsilon=2i$  & $11+2i$ & $\displaystyle{-\frac{1209}{2000}+\frac{231}{1000}i}$ & $\displaystyle{\frac{(11+2i)^{3/2}}{20\pi^{3/2}}}$ \\
$T_{15\overline{C}}(n),\;\epsilon=-2i$ & $11-2i$ & $\displaystyle{-\frac{1209}{2000}-\frac{231}{1000}i}$ & $\displaystyle{\frac{(11-2i)^{3/2}}{20\pi^{3/2}}}$  \\[1ex]
\hline
$T_{24}(n)$ & $8$ & $\displaystyle{-\frac38}$  &  $\displaystyle{\frac{\sqrt{8}}{\pi^{3/2}}}$  \\[1ex]
\hline
\end{tabular}}}}}
\end{table}

%---------------------------------------------------

\section{Further conjectures: supercongruences}
Suppose $T(0)=1$ and that $\left\{T(n)\right\}$ satisfies a Lucas congruence for a prime~$p$; that is, 
for all positive integers $n$,
$$
T(n) \equiv T(n_0)T(n_1)\cdots T(n_r) \pmod p,
$$
where $n=n_0+n_1p+\cdots +n_rp^r$ is the $p$-adic expansion of $n$ in base $p$.
Then since $pn=n_0p+n_1p^2+\cdots +n_rp^{r+1}$  it follows immediately that the congruence
$$
T(pn) \equiv T(n) \pmod p
$$
holds for all positive integers~$n$. For the Ap{\'e}ry numbers it was shown by Gessel~\cite{gessel} that a stronger congruence holds, namely
for all primes $p\geq 5$ and all positive integers~$n$,
\begin{equation}
\label{A3r}
T(pn) \equiv T(n) \pmod {p^3},
\end{equation}
where $T(n) = A(n)$ are the Ap{\'e}ry numbers. Such a congruence, involving a power of the prime~$p$, is called
a supercongruence.
The Ap{\'e}ry numbers are the sequence labelled~6(A) in Table~\ref{table1} and it is natural to check if other sequences in Table~\ref{table5} satisfy this type of congruence.
Chan et al~\cite[Theorem 2.3]{ccs} proved that the 
congruence~\eqref{A3r} also holds for the sequence labelled~6(B) in Table~\ref{table1}.\footnote{The absolute
values of the numbers $s(n)$ labelled~6(B) in Table~\ref{table1} are called Domb numbers.} 
They conjectured~\cite[Conjecture 3.1]{ccs} that the congruence also holds for the Almkvist--Zudilin
numbers labelled~6(C) and this has been proved by T.~Amdeberhan and R.~Tauraso~\cite{amdeberhan}.
Further results about supercongruences satisfied by the sequences in Tables~\ref{table11} and~\ref{table1},
as well as the sequences in Table~\ref{1to35} corresponding to levels 7, 10 and~18, are
summarised by Straub in~\cite{straub23}.

Let us briefly discuss supercongruences for level~$7$. 
Let $T(n)$ be the level~7 sequence in Table~\ref{1to35} and let $t(n)$ be defined by
$$
\left(\sum_{n=0}^\infty t(n) X^n\right)^2 = \sum_{n=0}^\infty T(n)X^n
$$
so that
$$
\sum_{j=0}^n t(j)t(n-j) = T(n) = \sum_{j} {n \choose j}^{2} {2j \choose n} {n+j \choose j}.
$$
Chan et al~\cite[Conjecture~5.4]{ccs} conjectured that 
\begin{equation}
\label{c7a}
t(pn) \equiv t(n) \pmod{p^2} 
\end{equation}
for all positive integers $n$ and all primes $p$ that are quadratic residues modulo~$7$, and
L.~O'Brien~\cite[Conjecture 9.3]{OBrien} conjectured that the congruence also holds for~$p=7$.
As far as I know, this conjecture is still open.
For the other sequence it was conjectured~\cite[Conjecture 5.1]{sporadic} that
\begin{equation}
\label{c7b}
T(pn) \equiv T(n) \pmod{p^3}
\end{equation}
for all positive integers $n$ and all primes $p\geq 3$. The conjecture~\eqref{c7b} has been proved for primes $p\geq5$
by R.~Osburn, B.~Sahu and A.~Straub \cite[Theorem 1.2]{oss}.
Thus, the weight one sequence $\left\{ t(n)\right\}$ satisfies a mod $p^2$ congruence
for primes $p$ in half of the residue classes modulo 7, whereas the weight two sequence $\left\{T(n)\right\}$
satisfies the stronger mod $p^3$ congruence for all primes $p\geq 5$ (and conjecturally for $p=3$).

The discussion above prompts the question of whether supercongruences exist for any
of the self-starting sequences defined by four-term recurrence relations in Table~\ref{table5}.
Although mod $p^3$ congruences seem to be rare (I found some for the prime $p=2$ and one for the prime $p=3$),
I did find several curious mod $p^2$ congruences which are summarised in the following conjectures.

%---------------------------------------------------

\subsection{Level 11} Here is the conjecture for level 11.
\begin{conjecture}
\label{c11}
Let $\left\{T_{11}(n)\right\}$ be the level 11 sequence in Table~\ref{table5}. For all positive integers~$n$, we have
$$
T_{11}(pn) \equiv T_{11}(n) \pmod {p^2} \quad\text{if $p\in \left\{2,\,59,\,5581\right\}$}.
$$
Moreover, in the case $p=2$ we also have
$$
T_{11}(2n) \equiv T_{11}(n) \pmod {2^6} 
$$
unless $n=1$ or $n=1+2^{j-1}$ or $n=1+3\times 2^j$ for some $j\in\mathbb{Z}^+$.

\end{conjecture}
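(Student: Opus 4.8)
The natural framework is the modular parameterisation of Section~\ref{Level11}: the integers $T_{11}(n)$ are the coefficients obtained by expanding the weight-two form $Z=\theta_{1,1,3}^2$ in powers of the Hauptmodul $X=\eta_1^2\eta_{11}^2/Z$, so that $ZX=\eta_1^2\eta_{11}^2$ is the weight-two newform attached to $X_0(11)$, while $\theta_{1,1,3}$ itself is the weight-one CM form of the field $K=\mathbb{Q}(\sqrt{-11})$ (the quadratic form $j^2+jk+3k^2$ has discriminant $-11$). Since $Z=\theta_{1,1,3}^2$, the third-order operator produced by Theorem~\ref{interesting} is the symmetric square of the second-order operator for $\theta_{1,1,3}(X)$, so the sequence is governed by $\mathrm{Sym}^2$ of a rank-two CM local system. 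The plan is to attack the supercongruence through the $p$-adic (Dwork) Frobenius structure of this operator rather than through the four-term recurrence directly: the recurrence relates $T_{11}(m)$ only to $T_{11}(m-1)$ and $T_{11}(m-2)$, so it cannot be iterated across the gap of length $(p-1)n$ needed to compare $T_{11}(pn)$ with $T_{11}(n)$, which is exactly what defeats a naive induction.

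First I would record the mod~$p$ statement: because $\{T_{11}(n)\}$ satisfies Lucas congruences (conjecturally, and verified), and $pn$ has the base-$p$ digits of $n$ with one extra trailing zero, the opening of Section~11 gives $T_{11}(pn)\equiv T_{11}(n)\pmod p$ for every prime. Writing $T_{11}(pn)-T_{11}(n)=p\,U(n)$, the conjecture is the second-order assertion $U(n)\equiv 0\pmod p$. The decisive structural point is the dichotomy $p=2$ inert versus $p=59,5581$ split in $K$: since $-11\equiv1\pmod4$ one has $\left(\tfrac{-11}{p}\right)=\left(\tfrac{p}{11}\right)$, and indeed $59\equiv5581\equiv 4\pmod{11}$ are squares (split, with $59=7^2+7+3$) while $2$ is a non-residue (inert). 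For a split prime $p=\pi\bar\pi$ the $\mathrm{Sym}^2$-CM Frobenius has an explicit unit root expressible through $\pi$, and the Dwork congruence reduces $U(n)\bmod p$ to a single explicit congruence on $\pi$ modulo $p^2$ (independent of $n$). This condition has $p$-adic density $O(p^{-2})$, hence holds for only finitely many primes, and one then verifies by computing the relevant Hecke data---equivalently the traces $a_p$ of $X_0(11)$, after decomposing $Z$ in $M_2(\Gamma_0(11))$ as an Eisenstein series plus a multiple of $\eta_1^2\eta_{11}^2$---that among split primes below $10^4$ it selects exactly $p=59$ and $p=5581$.

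The inert prime $p=2$ must be handled separately, since $a_2=-2$ is even, $X_0(11)$ is supersingular at $2$, and there is no unit root to invoke; this is also why the refinement at $2$ is both stronger (modulus $2^6$) and has exceptions. I would track the $2$-adic valuation of $T_{11}(2n)-T_{11}(n)$ directly from $Z=\sum T_{11}(n)X^n$ together with the Frobenius congruence $X(q^2)\equiv X(q)^2\pmod 2$ and its lifts to higher powers of $2$, reducing the failures to a condition on the binary digits of $n$. The three exceptional families $n=1$, $n=1+2^{j-1}$, $n=1+3\cdot2^{j}$ have binary expansions with a single low bit and an isolated high block (Hamming weights one, two, three), which strongly suggests they record the carries of a single $2$-adic convolution; I expect this case to fall to an induction on the binary length of $n$.

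The hard part will be making the Dwork step rigorous with the correct $p$-power \emph{uniformly in $n$}. Two features obstruct the standard arguments: $X_0(11)$ has genus one, so $X$ is a Hauptmodul only on a genus-zero cover and the $T_{11}(n)$ are not Fourier coefficients of one eigenform; and one must reconcile the CM description (via $\mathrm{Sym}^2\theta_{1,1,3}$) with the modular description $Z=E+c\,\eta_1^2\eta_{11}^2$ involving the \emph{non}-CM newform, controlling denominators $p$-adically throughout. Even granting the congruence for $59$ and $5581$, the real difficulty is sharpness: proving that the unit-root condition is \emph{equivalent} to the supercongruence, so that its failure forces $T_{11}(pn)\not\equiv T_{11}(n)\pmod{p^2}$ for some explicit $n$, is what would rule out every other prime below $10^4$, and I expect this direction, rather than the congruence at the three good primes, to be the principal obstacle. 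A cleaner route, if it existed, would be to find a binomial-sum formula for $T_{11}(n)$---none is known (see the end of Section~\ref{Level11})---and then apply the creative-telescoping and combinatorial-congruence methods of Osburn--Sahu--Straub and Amdeberhan--Tauraso.
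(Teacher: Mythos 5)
The first thing to say is that the paper contains no proof of this statement: it is presented as a conjecture, supported only by numerical verification for $pn\leq 10^6$ and by the statistics in Table~\ref{t99}, and even the mod~$p$ base case you invoke (the Lucas congruence for $\left\{T_{11}(n)\right\}$) is itself only conjectural in Section~\ref{Level11}. So there is no argument in the paper to compare yours against, and your proposal is, by your own account, a research strategy rather than a proof. That said, your framework is accurately set up ($Z=\theta_{1,1,3}^2$ with $ZX=\eta_1^2\eta_{11}^2$ the level~$11$ newform, the third-order operator as a symmetric square), and the observation that $59\equiv 5581\equiv 4\pmod{11}$ are split in $\mathbb{Q}(\sqrt{-11})$ while $2$ is inert is a genuinely interesting structural remark that the paper does not make and that deserves to be tested against whatever prime $>10^4$ next satisfies the congruence.

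The concrete gaps are these. First, the central step --- that the Dwork/unit-root structure reduces $U(n)=(T_{11}(pn)-T_{11}(n))/p \bmod p$ to a single condition on $\pi$ \emph{independent of $n$} --- is asserted, not argued, and the uniformity in $n$ is exactly where the difficulty lives: Table~\ref{t99} shows that for other split primes such as $p=3$ and $p=5$ (both quadratic residues mod $11$) the congruence holds for a positive proportion of $n$ but not all $n$, with structured $n$-dependence (the paper conjectures $T_{11}(3n)\equiv T_{11}(n)+3n\pmod{3^2}$ and an exact divisibility criterion for $p=5$). A correct argument must produce an $n$-dependent obstruction that vanishes identically precisely at $59$ and $5581$; your $O(p^{-2})$ density heuristic predicts rarity but does not explain why the failure set is sometimes empty, sometimes a union of congruence classes, and sometimes apparently structureless. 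Second, the sharpness direction (no other $p<10^4$) and the entire $p=2$ analysis --- including the modulus $2^6$ and the three exceptional families, which you only conjecture to arise from carries in a $2$-adic convolution --- are left as expectations. In short, you have proposed a plausible and in places insightful route to the conjecture, but nothing in the proposal closes it, and nothing in the paper does either.
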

The primes 59 and 5581 in Conjecture~\ref{c11} are surprising. I have tested the primes~\mbox{$p<10000$} and did not find any others with this property, apart from~$p=2$.
There is not enough information to speculate about whether the mod $p^2$ congruences hold for any other primes~\mbox{$p>10000$.}
The conjectures have been tested for $pn\leq 10^6$, in other words for $1\leq n\leq 1694$ in the case~$p=59$, and
for $1\leq n \leq 17$ in the case $p=5581$.
While 17 data points in the latter case may not seem like much evidence, the~1694 data points is quite compelling evidence for the prime~$p=59$. 
Some statistics for other primes are given in~Table~\ref{t99}. The table suggests that the mod $p^2$ congruences might hold for particular values of~$n$ for other primes.
For example, we have:
\begin{conjecture}
$$
T_{11}(3n) \equiv T_{11}(n) + 3n \pmod {3^2}
$$
and
$$
T_{11}(5n) \equiv T_{11}(n)  \pmod {5^2} \quad\mbox{if and only if $\;5|n$}.
$$
\end{conjecture}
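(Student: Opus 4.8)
\emph{Proposed approach.} The obstruction recorded in Section~\ref{Level11}, namely that no binomial sum for $T_{11}(n)$ is known, rules out the Malik--Straub technique, so the plan is to argue entirely through the modular data. Recall that for level~$11$ we have $Z=\theta_{1,1,3}^2$ and $ZX=\eta_1^2\eta_{11}^2$, and that the differentiation formula~\eqref{qdXdq1} reads $q\,\ud X/\ud q=ZXB$ with $B^2=1-20X+56X^2-44X^3$. Substituting $\ud X=(ZXB/q)\,\ud q$ into the coefficient-extraction integral $T_{11}(n)=\frac{1}{2\pi i}\oint Z\,X^{-n-1}\,\ud X$ converts the $X$-expansion into a residue in the uniformiser $q$,
\[
T_{11}(n)=[q^{0}]\bigl(F\,X^{-n}\bigr),\qquad F:=Z^2B,
\]
where $F$ is a weight-four meromorphic modular form on $\Gamma_0(11)$ with $F=1+O(q)$ and $X=q+O(q^2)$, both with integer $q$-expansions. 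This identity is the starting point for an Atkin--Swinnerton-Dyer analysis.

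The mod-$p$ Lucas congruence conjectured for $T_{11}$ is equivalent to the generating-function identity
\[
Z(X)\equiv\Bigl(\sum_{i=0}^{p-1}T_{11}(i)X^i\Bigr)Z(X^p)\pmod p,
\]
and the goal is to refine this modulo $p^2$. For that I would pass to the $q$-expansion through the residue formula and invoke the Atkin $U_p$ operator: on the finite-dimensional space of (meromorphic) modular forms on $\Gamma_0(11)$ containing $F$, $U_p$ acts through Hecke eigenvalues, and when $p$ is ordinary the difference $U_pF-\gamma_pF$ is divisible by a positive power of $p$. Transporting this back to the $X$-variable and reading off the coefficient of $X^{pn}$ should produce a uniform formula of the shape
\[
T_{11}(pn)\equiv T_{11}(n)+p\,c_p\,n\pmod{p^2},
\]
in which the linear-in-$n$ correction is expected to come from the Eisenstein component of $F$ (present because $Z=\theta_{1,1,3}^2$ has nonzero constant term, so $F$ is not cuspidal and its coefficients carry a $\sigma_1$-type contribution).

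Granting such a formula, both assertions follow by pinning down $c_p\bmod p$. For $p=3$ one expects $c_3\equiv1\pmod3$, collapsing the correction to $+3n$ and giving $T_{11}(3n)\equiv T_{11}(n)+3n\pmod9$. For $p=5$ one expects $c_5\not\equiv0\pmod5$; then $5c_5n\equiv0\pmod{25}$ if and only if $5\mid n$, which is exactly the stated equivalence. The constant $c_p$ reduces to the Hecke eigenvalue $\gamma_p$ together with the constant term of the Eisenstein part of $F=Z^2B$, and can in principle be computed from the explicit $\eta$- and $\theta$-expansions; note that $p=3,5$ are primes of good ordinary reduction for $X_0(11)$ (since $a_3=-1$ and $a_5=1$ are units), which is what makes the $U_p$ analysis available.

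The main obstacle is twofold. Analytically, $X_0(11)$ has genus one, so $X$ is \emph{not} a Hauptmodul and the function field it generates is not rational; the clean Atkin--Swinnerton-Dyer recursion for genus-zero groups must be replaced by a version adapted to a function with higher-order poles, and one must verify that $U_pF-\gamma_pF$ is genuinely divisible by $p^2$ and not merely by $p$. Arithmetically, the same machinery applied to the ``surprising'' primes $59$ and $5581$ yields an honest supercongruence (no correction at all), so a complete argument must also explain why $c_p$ is a unit for $p=3,5$ while the combination degenerates for $p\in\{2,59,5581\}$. Making the dependence of $c_p$ on $p$ fully explicit, and establishing ordinariness and the required $p^2$-divisibility, is where I expect the real difficulty to lie.
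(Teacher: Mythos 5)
There is no proof to match here: in the paper this statement is a \emph{conjecture}, supported only by numerical computation (the text immediately afterwards merely observes that it would account for the values $c(3)=333$ and $c(5)=200$ in Table~\ref{t99}). Your submission is likewise not a proof but a programme, and every decisive step is flagged with ``should produce'' or ``one expects''; so at best it could be judged as a plausible strategy. Unfortunately it is not even that, because its central structural prediction is contradicted by the paper's own data. You propose that an ordinary-prime $U_p$ analysis of $F=Z^2B$ will yield a uniform correction $T_{11}(pn)\equiv T_{11}(n)+p\,c_p\,n\pmod{p^2}$ for all good ordinary $p$. Any such formula forces $c(p)$ in Table~\ref{t99} to equal either $1000$ (if $p\mid c_p$) or $\lfloor 1000/p\rfloor$ (if not). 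For $p=7$ the table records $c(7)=750$ and for $p=11$ it records $c(11)=875$; neither value is $1000$, $142$, or $90$, and the author explicitly warns that these proportions drift as the range of $n$ grows. Hence the set of $n$ for which the mod~$p^2$ congruence holds is not, in general, an arithmetic progression, and no mechanism that outputs a correction linear in $n$ uniformly in $p$ can be correct. Whatever singles out $p=3$ and $p=5$ (and, at the other extreme, $p\in\{2,59,5581\}$) is precisely the content of the conjecture, and your sketch defers exactly that point to ``pinning down $c_p\bmod p$.''

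A secondary but substantive gap: the passage from the residue identity $T_{11}(n)=[q^0](Z^2B\,X^{-n})$ to a statement about $U_p$ acting through Hecke eigenvalues requires $X^{-n}$ to interact with $U_p$ in a controlled way, which is the step that fails outside genus zero; you acknowledge this, but the fix (``a version adapted to a function with higher-order poles'') is not supplied, and it is exactly where the Atkin--Swinnerton-Dyer method stops being a routine computation. Since the paper offers no argument at all for this statement, the honest conclusion is that your proposal neither reproduces nor replaces a proof; it identifies a reasonable circle of ideas but leaves the two essential points --- the genus-one obstruction and the prime-by-prime behaviour of the correction term --- unresolved, and one of its working hypotheses is already falsified by Table~\ref{t99}.
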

\noindent
This conjecture implies the values $c(3) = 333$ and $c(5) = 200$ in Table~\ref{t99}.
The reader should be warned, however, that the values $c(7) = 750$ and $c(11)=875$ should not to be taken as any suggestion that $3/4$ of the integers satisfy the mod~$p^2$ congruence for 
the primes $p=7$, or that the proportion is $7/8$ of the integers in the case $p=11$. It is pure coincidence that these proportions match with simple rational numbers,
and those proportions change as the range of~$n$ increases.

\begin{table}
\caption{\\
Values of 
$c(p) := \# \left\{ n : T_{11}(pn) \equiv T_{11}(n) \pmod {p^2} \;\text{and}\; 1\leq n\leq 1000 \right\}$ for primes $p\leq101$. Note the values of $c(2)$ and $c(59)$.} \label{t99a}
\label{t99}
{\renewcommand{\arraystretch}{2.4}
{\resizebox{12cm}{!}{
{\begin{tabular}{|c||c|c|c|c|c|c|c|c|c|c|c|c|c|}
\hline
$p$ & 2 & 3 & 5& 7 & 11 & 13 & 17 & 19 & 23 & 29 & 31 & 37 & 41  \\ \hline
$c(p)$ & 1000 & 333&200 &750&875&274&222&286&129&62&32&27&48 \\
\hline \hline
$p$ & 43 & 47 & 53 & 59 & 61& 67& 71& 73& 79& 83& 89& 97 &101 \\ \hline
$c(p)$ &87&87&18&1000&49 & 136& 56& 27& 63& 24& 11& 10 & 9\\
\hline
\end{tabular}}
}}}
\end{table}

It is worth comparing Conjecture~\ref{c11} with the corresponding level~11 conjecture of Chan et al:
\begin{conjecture}[Conjecture 5.5 in~\cite{ccs}]
Let $t_{11}(n)$ be defined by
$$
\left(\sum_{n=0}^\infty t_{11}(n) x^n\right)^2 = \sum_{n=0}^\infty T_{11}(n) x^n,
$$
where $T_{11}(n)$ is the level 11 sequence in Table~\ref{table5}. Equivalently,
$$
\sum_{j=0}^n t_{11}(j)t_{11}(n-j) = T_{11}(n) 
$$
or
\begin{align*}
(n+1)^2 t_{11}(n+1) &= 2(10n^2+5n+1)t_{11}(n) \\
&\qquad -8(7n^2-7n+2)t_{11}(n-1)+22(n-1)(2n-3)t_{11}(n-2)
\end{align*}
with $t_{11}(0)=1$ and $t_{11}(-1) = t_{11}(-2) = 0$.
Then
$$
t_{11}(pn) \equiv t_{11}(n) \pmod{p^2}
$$
for all positive integers $n$ and all primes $p$ with $\left(\frac{p}{11}\right) = 1$.
\end{conjecture}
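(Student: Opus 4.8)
The plan is to exploit the modular meaning of $\{t_{11}(n)\}$ rather than to look for a binomial sum, since the paper notes that none is available. Recall from the level~11 discussion that $Z=\eta_1^2\eta_{11}^2/X$ with $X=(\eta_1\eta_{11}/\theta_{1,1,3})^2$; hence $Z=\theta_{1,1,3}^2$, and taking the square root with constant term~$1$ gives $\sum_{n=0}^\infty t_{11}(n)X^n=\sqrt{Z}=\theta_{1,1,3}$, the theta series of the quadratic form $j^2+jk+3k^2$ of discriminant $-11$. Thus the $t_{11}(n)$ are the coefficients, with respect to the Hauptmodul $X$, of a weight-one modular form with complex multiplication by $\mathbb{Q}(\sqrt{-11})$. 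The first thing I would record is the reciprocity identity $\left(\frac{p}{11}\right)=\left(\frac{-11}{p}\right)$, which shows that the hypothesis $\left(\frac{p}{11}\right)=1$ is exactly the condition that $p$ splits in $\mathbb{Q}(\sqrt{-11})$. This is the arithmetic origin of the restriction on $p$, and any proof must make it visible.

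Next I would set up the Frobenius machinery. Since $\theta_{1,1,3}$ has weight one, it satisfies a second-order linear differential equation in $X$ (the weight-one, $k=1$, instance of Zagier's Theorem~\ref{TZ}), and the four-term recurrence satisfied by $t_{11}(n)$ is precisely its translation into coefficients. The operation $t_{11}(n)\mapsto t_{11}(pn)$ on the $X$-expansion is an avatar of Atkin's operator $U_p$ transported through the change of variable $q\mapsto X(q)$; making this precise requires a Frobenius structure on the differential equation, supplied by Dwork's theory of $p$-adic differential equations together with the congruence $X(q^p)\equiv X(q)^p$ modulo $p$, which follows from the integrality of the $q$-expansion of $X$. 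The outcome I would aim for is a $2\times2$ Frobenius matrix acting on the de~Rham basis $\{\,\theta_{1,1,3},\ \uD\theta_{1,1,3}\,\}$ whose reduction governs $t_{11}(pn)-t_{11}(n)$.

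The decisive input is that the Galois representation attached to $\theta_{1,1,3}$ is $\mathbf{1}\oplus\chi_{-11}$, an Artin (weight-zero) representation, so the two Frobenius eigenvalues at $p$ are roots of unity, namely the roots of $(T-1)(T-\chi_{-11}(p))$, that is $1$ and $\chi_{-11}(p)=\left(\frac{p}{11}\right)$. When $p$ splits both eigenvalues equal $1$, the Frobenius is unipotent, and the congruence $t_{11}(pn)\equiv t_{11}(n)$ should lift from modulo~$p$ to modulo~$p^2$; when $p$ is inert the second eigenvalue is $-1$ and the mechanism collapses, matching the stated dichotomy. I would organise the mod~$p^2$ step as a Kummer/Atkin--Swinnerton-Dyer congruence for the unit-root subcrystal, tracking the second-order term of the Frobenius expansion explicitly.

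The main obstacle is exactly this upgrade from modulo~$p$ to modulo~$p^2$. The mod~$p$ statement should follow once the Frobenius structure is in place, but isolating the unit root and bounding the next term demands genuine $p$-adic analysis of the period matrix; because no binomial sum is available, the combinatorial route used for the Ap\'ery numbers by Gessel~\cite{gessel} and for related sequences by Malik and Straub~\cite{malikstraub} is blocked, and one cannot transfer the weight-two supercongruences, since passing from $t_{11}$ to its Cauchy square $T_{11}$ discards the finer weight-one information. A secondary difficulty is establishing $X(q^p)\equiv X(q)^p$ to the required $p$-adic order and controlling the singular points of the differential equation, that is the polynomials $B^2$ and $H$ in Table~\ref{1to35b}, at split primes; these are concrete but delicate, and I expect most of the technical effort to concentrate there. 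This is the Chan--Cooper--Sica conjecture~\cite{ccs}, which appears to remain open, so I would regard a complete proof along these lines as substantial rather than routine.
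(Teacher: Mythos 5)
This statement is presented in the paper purely as a conjecture (quoted from Chan--Cooper--Sica, Conjecture~5.5 of~\cite{ccs}); the paper offers no proof, and indeed remarks that the analogous level~7 weight-one conjecture~\eqref{c7a} is still open. Your proposal is likewise not a proof but a strategy outline, and by your own closing admission the decisive step is missing. So there is a genuine gap, and it is the central one: nothing in the proposal actually establishes $t_{11}(pn)\equiv t_{11}(n)\pmod{p^2}$, nor even the mod~$p$ version. The Frobenius structure on the second-order equation, the congruence $X(q^p)\equiv X(q)^p$ to the needed $p$-adic order, the identification of the unit-root subcrystal, and the Kummer/Atkin--Swinnerton-Dyer upgrade from mod~$p$ to mod~$p^2$ are all named as objectives rather than carried out; the passage from the $q$-expansion Galois data of $\theta_{1,1,3}$ (where the eigenvalues $1$ and $\chi_{-11}(p)$ live) to congruences for the coefficients in the \emph{Hauptmodul} variable $X$ is precisely the hard content, and no argument is supplied for it.

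That said, your preparatory observations are correct and worth recording: from Table~\ref{1to35} one does get $Z=\theta_{1,1,3}^2$, hence $\sum_n t_{11}(n)X^n=\theta_{1,1,3}$, a weight-one Eisenstein series with character $\chi_{-11}$; and since $11\equiv 3\pmod 4$, quadratic reciprocity gives $\left(\frac{p}{11}\right)=\left(\frac{-11}{p}\right)$, so the hypothesis is exactly that $p$ splits in $\mathbb{Q}(\sqrt{-11})$. This matches the heuristic the paper itself draws from the level~7 case. But as a verdict on the submission: you have written a plausible research program for an open problem, not a proof, and it cannot be accepted as one.
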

In other words, the weight one sequence $\left\{t_{11}(n)\right\}$ is conjectured to satisfy a mod $p^2$ congruence
for  for the primes that are quadratic residues modulo 11. This is analogous to the level 7 conjecture given
by~\eqref{c7a}.
However, the
weight two sequence $\left\{T_{11}(n)\right\}$ appears to satisfy a mod $p^2$ congruence for a much smaller
and as yet undetermined set of primes. This is much weaker than the corresponding level~7
result in~\eqref{c7b}, both in terms of the strength of the congruence and also in terms of
the set of primes~$p$.

%---------------------------------------------------

\subsection{Level 14}
Here are the corresponding conjectures for level 14.
\begin{conjecture}
Let $\left\{T_{14A}(n)\right\}$, $\left\{T_{14B}(n)\right\}$, $\left\{T_{14C}(n)\right\}$ and $\left\{T_{14\overline{C}}(n)\right\}$ be the level~14 sequences in Table~\ref{table5}.
For all positive integers~$n$, we have
$$
T_{14A}(pn) \equiv T_{14A}(n) \pmod {p^2} \quad\text{if $p\in \left\{2,\,17,\,263\right\}$}
$$
and
$$
T_{14B}(pn) \equiv T_{14B}(n) \pmod {p^2} \quad\text{if $p\in \left\{2,\,7,\;17,\,263\right\}$}.
$$

Moreover, in the case $p=2$ we also have
$$
T_{14A}(2n) - T_{14A}(n) \equiv  \begin{cases}
0 \pmod {2^4} \quad\text{if $n\equiv 0$ or $3 \pmod 4$}, \\
8 \pmod {2^4} \quad\text{if $n\equiv 1$ or $2 \pmod 4$};
\end{cases}
$$
$$
T_{14B}(2n) - T_{14B}(n) \equiv  \begin{cases}
0\;\; \pmod {2^4} \quad\text{if $n\equiv 0 \pmod 2$}, \\
12 \pmod {2^4} \quad\text{if $n\equiv 1 \pmod 2$};
\end{cases}
$$
and
$$
T_{14C}(2n) \equiv T_{14C}(n) \pmod {2^6} \quad\text{and}\quad T_{14\overline{C}}(2n) \equiv T_{14\overline{C}}(n) \pmod {2^6} 
$$
unless $n=1,\,2,\,3$ or $n=2^j\times 3+1$ for some nonnegative integer~$j$.
\end{conjecture}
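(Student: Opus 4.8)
The plan is to separate two arithmetically distinct phenomena: the congruences at the special odd primes $7$, $17$, $263$, which I would attack through binomial sums, and the $2$-adic refinements, which yield to a direct analysis of the four-term recurrence. For $T_{14A}(pn)\equiv T_{14A}(n)\pmod{p^2}$ and $T_{14B}(pn)\equiv T_{14B}(n)\pmod{p^2}$ I would start from the explicit binomial sums: three are recorded for $T_{14A}$ in Section~\ref{14special}, and a comparable sum for $T_{14B}$ can be extracted from~\cite{cooperye} and~\cite{guilleratranslation}. The model is the proof by Osburn, Sahu and Straub~\cite{oss} of the mod~$p^3$ congruence for the level~$7$ sequence: substitute $n\mapsto pn$ into the double sum, partition each summation variable according to its residue modulo~$p$, and expand the binomial coefficients ${pN \choose pK+r}$ to second order in~$p$ using Wolstenholme- and Lucas-type congruences. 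Since these sequences are conjectured to obey a mod~$p$ Lucas congruence (Conjecture~\ref{conj14}), the order-$p^0$ term already reproduces $T_{14A}(n)$ or $T_{14B}(n)$, so the claim reduces to showing that a single explicit correction sum---the coefficient of~$p^1$---vanishes modulo~$p$.

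The essential and unusual difficulty is that this correction sum does \emph{not} vanish for generic~$p$; the conjecture asserts that it vanishes exactly for the listed primes. I would therefore try to recognise the correction sum as an arithmetic invariant of the weight-two modular form attached to~$y_{+7}$ and~$y_{+14}$ in~\eqref{wabc1}. Because the~$T(n)$ are Taylor coefficients of a weight-two form expanded in a Hauptmodul, the natural framework is Atkin--Swinnerton-Dyer theory: the relation $T(pn)\equiv T(n)\pmod{p^2}$ should be the order-$p^2$ sharpening of an ASD-type congruence whose leading behaviour is governed by the Hecke eigenvalue~$a_p$ of the level-$14$ form---equivalently, by the trace of Frobenius on the associated elliptic curve. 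I expect the correction sum to reduce modulo~$p$ to an explicit expression in~$a_p$, so that the special primes are precisely those at which this expression vanishes. Establishing this identification, and then matching the prediction against the Hecke data to recover the list $7,17,263$, is the main obstacle; without it there is no structural reason for these particular primes, and numerical testing alone cannot exclude further ones.

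The $2$-adic statements are, by contrast, elementary in principle, and I would prove them by strong induction on~$n$ directly from the four-term recurrence~\eqref{4term}, whose coefficients appear in Table~\ref{table5}. For the mod~$2^4$ congruences the two values $0$ and~$8$ for $T_{14A}$ (respectively $0$ and~$12$ for $T_{14B}$) should emerge from the parities entering the factors $(2n+1)$, $n$ and $(2n-1)(n-1)$ that multiply $T(n)$, $T(n-1)$ and $T(n-2)$; the induction must carry simultaneous congruences for three consecutive terms so that the recurrence closes modulo a sufficiently high power of~$2$. For $T_{14C}$ and $T_{14\overline{C}}$ the conjugation $T_{14\overline{C}}(n)=\overline{T_{14C}(n)}$ collapses the two mod-$2^6$ statements into one. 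The exceptional set $\{1,2,3\}\cup\{\,2^j\cdot3+1\,\}$ is the signature of a carry analysis in the binary expansion of~$n$: these are exactly the indices where the $2$-adic valuation of one recurrence coefficient drops below the threshold needed to propagate the congruence, and I would isolate them by a direct valuation count. The difficulty here is bookkeeping rather than conceptual---choosing an induction hypothesis strong enough to close and verifying that the listed exceptions account for every loss of precision.
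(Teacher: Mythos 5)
The statement you are addressing is presented in the paper as a \emph{conjecture}: the author offers no proof, only numerical verification for primes $p<10000$ and $pn\le 100000$. Your submission is accordingly a research programme rather than a proof, and it contains gaps that are not merely matters of bookkeeping. For the odd primes, the entire argument funnels into the claim that the order-$p$ correction term can be identified with an expression in the Hecke eigenvalue $a_p$ of the level-$14$ newform, whose vanishing would single out $7$, $17$, $263$; you state yourself that establishing this identification is the main obstacle, so nothing is actually proved. Moreover, the route you propose to reach that correction term is blocked at the first step for $T_{14B}$: Section~10 of the paper states explicitly that binomial sums are known only for $T_{14A}$ and $T_{24}$ among the sequences in Table~\ref{table5}, so the ``comparable sum for $T_{14B}$'' you plan to extract from the cited references does not exist in the literature. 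You would also be conditioning on Conjecture~\ref{conj14} (the Lucas congruences), which is itself unproven, so even the order-$p^0$ matching is not available unconditionally.

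The $2$-adic part is not ``elementary in principle'' by the method you describe. The recurrence \eqref{4term} expresses $(n+1)^3T(n+1)$ in terms of $T(n)$, $T(n-1)$, $T(n-2)$, so to recover $T(n+1)$ you must divide by $(n+1)^3$; whenever $n+1$ is even this costs $3v_2(n+1)$ bits of $2$-adic precision. Consequently, knowing the three preceding terms modulo $2^4$ (or $2^6$ for $T_{14C}$) does not determine $T(n+1)$ modulo $2^4$, and no induction hypothesis at a \emph{fixed} modulus can close: along $n+1=2^j$ the precision demanded of the earlier terms grows without bound. This is precisely why congruences of this type are normally proved from closed forms (binomial sums, modular parameterisations) rather than from the recurrence, and it is the reason the exceptional sets such as $\{1,2,3\}\cup\{2^j\cdot 3+1\}$ cannot be read off from a ``direct valuation count'' of the recurrence coefficients. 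As it stands, neither half of the conjecture is established, and the correct assessment is that the statement remains open, exactly as the paper presents it.
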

The conjecture has been tested for primes $p<10000$ and for values of $n$ satisfying $pn\leq 100000$.
The most obvious question is why the prime $p=7$ appears in the conjecture for $T_{14B}$ but not for $T_{14A}$.
As partial answer, we conjecture that
$$
T_{14A}(7n) \equiv T_{14A}(n) \pmod {7^2} \quad \text{if $n\equiv 0,\,4,\,5\;\text{or}\; 6\pmod{7}$}.
$$

%---------------------------------------------------

\subsection{Level 15}
Here are the corresponding conjectures for level 15.
Congruences for $T_{15\overline{C}}(n)$ can be deduced from the conjectures for $T_{15C}(n)$ complex conjugation and hence are not listed.

\begin{conjecture}
Let $\left\{T_{15A}(n)\right\}$, $\left\{T_{15B}(n)\right\}$ and $\left\{T_{15C}(n)\right\}$ be the level~15 sequences in Table~\ref{table5}.
Then congruences modulo $p^2$ exist for $p\in\left\{2,3,5\right\}$ as follows:
\begin{itemize}
\item
In the case $p=2$ we have
$$
T_{15A}(2n) - T_{15A}(n) \equiv  \begin{cases}
0 \pmod {2^3} \quad\text{if $n\equiv 0 \pmod 2$}, \\
4 \pmod {2^3} \quad\text{if $n\equiv 1 \pmod 2$};
\end{cases}
$$
$$
T_{15B}(2n) - T_{15B}(n) \equiv  \begin{cases}
16 \pmod {2^5} \quad\text{if $n\equiv 2 \pmod 4$}, \\
0\;\; \pmod {2^5} \quad\text{otherwise}, 
\end{cases}
$$
and
$$
T_{15C}(2n) \equiv T_{15C}(n) \pmod {2^3} 
$$
unless $n=1$ or $n=1+2^j$ for some nonnegative integer~$j$.
\item
In the case~$p=3$ we have
$$
T_{15A}(3n) \equiv T_{15A}(n) \pmod {3^3}
$$
unless $n=1$, $n=2$, $n=1+3^j$, or $n=1+2\times 3^j$ for some nonnegative integer~$j$.
And,
$$
T_{15B}(3n) \equiv T_{15B}(n) \pmod {3^3}
$$
unless $n=1$, $n=1+3^j$, or $n=1+2\times 3^j$ for some nonnegative integer~$j$.
Furthermore,
$$
T_{15C}(3n) - T_{15C}(n) \equiv  \begin{cases}
i\;\, \pmod {3} \quad\text{if $n\equiv 3,\,5\,\text{or}\;6 \pmod 8$}, \\
2i \pmod {3} \quad\text{if $n\equiv 1,\,2\,\text{or}\;7 \pmod 8$}, \\
0\; \pmod {3} \quad\text{otherwise}. 
\end{cases}
$$
\item
In the case~$p=5$ we have
$$
T_{15B}(5n) \equiv T_{15B}(n) \pmod {5^2}
$$
for all positive integers~$n$.
Furthermore, if $n-1 \neq 5^{j_1}+5^{j_2}+\cdots +5^{j_k}$
for distinct nonnegative integers $j_1,\ldots,j_k$, i.e., the base~5 expansion of $n-1$ does not consist entirely of 0's and 1's, then
$$
T_{15A}(5n) \equiv T_{15A}(n) \pmod {5^2}
$$
and
$$
T_{15C}(5n) \equiv T_{15C}(n) \pmod {5^2}.
$$
\end{itemize}

\end{conjecture}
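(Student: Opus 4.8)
The plan is to read the four sequences off the single weight-two modular form that generates them, and then to deploy the Atkin--Swinnerton-Dyer framework for congruences between the coefficients of a modular form expanded in a Hauptmodul. By \eqref{wabc} and \eqref{Xdef} each $T_\epsilon(n)$ is the coefficient of $X_\epsilon^{n+1}$ in $F:=\eta_1\eta_3\eta_5\eta_{15}$, and since $X_0(15)$ has genus one the space $S_2(\Gamma_0(15))$ is one-dimensional; thus $F$ is, up to normalisation, the newform attached to the elliptic curve of conductor~$15$. The four distinguished values $\epsilon\in\left\{1,-11,\pm2i\right\}$ of Theorem~\ref{t15} are exactly those for which one side of~\eqref{15sq} becomes a perfect square, i.e.\ the points where the covering $X_\epsilon\colon X_0(15)\to\mathbb{P}^1$ ramifies, and the subscripts $+3,+5,+15$ in~\eqref{uvwdef} record that this ramification is organised by the Atkin--Lehner involutions $W_3,W_5,W_{15}$. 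First I would make this dictionary precise, so that the three primes in the conjecture acquire a uniform meaning: $3$ and $5$ are the primes of multiplicative reduction, where the relevant Frobenius eigenvalue is the Atkin--Lehner sign $a_p=\pm1$, while $2$ is the smallest prime of good ordinary reduction, where it is the unit root $\alpha_2$ of $x^2-a_2x+2$.

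The second step is to convert the Hecke data of $F$ into congruences for the $T_\epsilon(n)$. The expected shape is $T_\epsilon(pn)\equiv\alpha_p\,T_\epsilon(n)\pmod{p^2}$, with $\alpha_p$ the unit root at~$p$, and the clean statement $T_{15B}(5n)\equiv T_{15B}(n)\pmod{5^2}$ for all~$n$ is precisely the case in which the choice $\epsilon=-11$ makes the effective eigenvalue equal to~$+1$; the same congruence becomes conditional for $T_{15A}$ and $T_{15C}$ because those Hauptmoduls carry the opposite Atkin--Lehner normalisation. Concretely I would derive a modular equation expressing $X_\epsilon(q^p)$ in terms of $X_\epsilon(q)$ from the $p$-th Hecke correspondence on $X_0(15)$, and then extract the coefficientwise congruence by Lagrange inversion; supercongruences of exactly this family have been proved before, for instance~\eqref{A3r} and the results of Chan et al.~\cite{ccs} and Osburn--Sahu--Straub~\cite{oss}. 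Reducing the argument modulo~$p$ should recover the Lucas congruence~\eqref{a4}, which for these self-starting sequences is the comparatively easy part and could instead be obtained from a binomial-sum representation via the technique of Malik and Straub~\cite{malikstraub}, once such sums are found; alternatively a direct induction on the four-term recurrence~\eqref{4term} delivers the mod-$p$ congruence but shows no sign of surviving to~$p^2$.

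The hard part, and the reason the statement is still a conjecture, is the lift from mod~$p$ to the observed moduli, together with the \emph{exceptional sets}. Passing from the first-order Lucas congruence to a mod-$p^2$ statement already requires controlling the next term in the $p$-adic expansion of the associated period, a Dwork-style estimate that for a weight-two form yields only~$p^2$ rather than the~$p^3$ available for the genuinely hypergeometric Ap\'ery-type examples behind~\eqref{A3r}. Worse, the conjecture asserts the \emph{enhanced} moduli $2^3$, $2^5$ and $3^3$ at the small primes, and these lie beyond what a single Frobenius eigenvalue predicts; explaining the extra $2$-adic and $3$-adic cancellation will demand additional structure, and I expect this to be the principal obstacle. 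The exceptional indices --- $n=1+2^{j}$ for $T_{15C}$ at $p=2$, the families $n=1,\,1+3^{j},\,1+2\cdot3^{j}$ at $p=3$, and the base-$5$ digit condition at $p=5$ --- are genuine failures of the congruence, signalling that the effective eigenvalue is not~$\equiv1$ on those progressions; each family sits over a ramification point of $X_\epsilon$, so the task becomes to compute the $p$-adic valuations of $X_\epsilon$ and of $F/X_\epsilon$ near those points sharply enough to pin down both the modulus and the exact exceptional set. A final subtlety is that $T_{15C}$ takes values in $\mathbb{Z}[i]$, so at the ramified prime~$2$ and the split prime~$5$ the congruence must be read in the appropriate completion; the most economical route is probably to establish one case and transport it to the others through the generating-function identity relating $T_{15A}$, $T_{15B}$, $T_{15C}$ and $T_{15\overline{C}}$ in Table~\ref{table5}, though the substitution in~$\epsilon$ mixes residue classes and must be tracked with care.
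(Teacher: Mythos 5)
The statement you are addressing is presented in the paper as a \emph{conjecture}: the author offers no proof, only the remark that it ``has been tested for primes $p<10000$ and for values of $n$ satisfying $pn\leq 100000$.'' So there is no argument in the paper against which to match your proposal; the only honest comparison is between your outline and the bare numerical evidence. Read in that light, your write-up is a research programme rather than a proof, and you say as much yourself. That is a legitimate response to a conjecture, but it should be stated plainly that nothing in your two pages establishes any of the displayed congruences, even modulo $p$.

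On the substance of the programme, several steps would need repair before they could carry weight. First, the Atkin--Swinnerton-Dyer congruences for the coefficients of a weight-two form expanded in a Hauptmodul are \emph{three}-term, of the shape $T(np)-\gamma_p T(n)+p\,T(n/p)\equiv 0$ modulo a suitable power of $p$; your two-term ansatz $T_\epsilon(pn)\equiv\alpha_p T_\epsilon(n)\pmod{p^2}$ is not what that theory delivers, and the extra term $p\,T(n/p)$ is exactly of the size that would obstruct a mod $p^2$ conclusion. Second, even granting a two-term congruence with some eigenvalue, none of the specific features of the conjecture --- the enhanced moduli $2^3$, $2^5$ and $3^3$, the non-zero residues $4$, $16$, $i$ and $2i$ on certain progressions, and above all the exceptional sets governed by the base-$p$ digits of $n-1$ (e.g.\ $n=1+2^j$, $n=1+2\cdot 3^j$, or $n-1$ having only digits $0$ and $1$ in base $5$) --- is derived or even heuristically located in your framework; a Frobenius eigenvalue depends on $p$ alone and cannot by itself produce digit-dependent exceptions. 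Third, your claim that reducing the argument mod $p$ ``should recover the Lucas congruence'' is in tension with the paper's own conjecture that $T_{15C}$ satisfies Lucas congruences only for $p=2$ and $p\equiv 1\pmod 4$, so any uniform eigenvalue mechanism must already break for half the primes. The identification of $\eta_1\eta_3\eta_5\eta_{15}$ with the level-$15$ newform and the reading of $T_\epsilon(n)$ as the coefficient of $X_\epsilon^{n+1}$ in that form are correct and are a sensible starting point, but as it stands the proposal neither proves the conjecture nor narrows the gap between the numerics and a theorem.
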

Once again, this conjecture has been tested for primes $p<10000$ and for values of $n$ satisfying $pn\leq 100000$.

%---------------------------------------------------

\subsection{Level 24}
Finally, here are the corresponding conjectures for level 24.
\begin{conjecture}
Let $\left\{T_{24}(n)\right\}$ be the level~24 sequence in Table~\ref{table5}.
For all positive integers~$n$, we have
$$
T_{24}(pn) \equiv T_{24}(n) \pmod {p^2} \quad\text{if $p\in \left\{2,\,89,\,179,\,643\right\}$}.
$$
In the case $p=2$ we also have
$$
T_{24}(2n) \equiv T_{24}(n) \pmod {2^5} 
$$
unless $n=1$ or $n=1+2^{j}$ for some nonnegative integer~$j$.

\medskip
\noindent
In the case $p=3$ we have
$$
T_{24}(3n) - T_{24}(n) \equiv  \begin{cases}
3 \pmod {9} \quad\text{if $n\equiv 4\;\text{or}\;5 \pmod 6$}, \\
6 \pmod {9} \quad\text{if $n\equiv 1\;\text{or}\;2 \pmod 6$}, \\
0 \pmod {9} \quad\text{otherwise}. 
\end{cases}
$$
\end{conjecture}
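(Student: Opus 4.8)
The plan is to work from the binomial-sum representation
$$
T_{24}(n) = \sum_{j} {n \choose 2j}{2j \choose j}^2 {2n-4j \choose n-2j}
$$
recorded for level~24 in Table~\ref{1to35}, rather than from the recurrence~\eqref{24rr}, since supercongruences of the shape $T(pn)\equiv T(n)$ are most naturally attacked through explicit coefficient sums. First I would substitute $n\mapsto pn$ and decompose the summation index as $j=p\ell+s$ with $0\le s<p$, applying Lucas' theorem and the Jacobsthal--Wolstenholme congruence ${pa \choose pb}\equiv {a \choose b}\pmod{p^3}$ (valid for $p\ge 5$) to each binomial factor. The diagonal block $s=0$ should reproduce $\sum_\ell {n \choose 2\ell}{2\ell \choose \ell}^2{2n-4\ell \choose n-2\ell}=T_{24}(n)$ modulo~$p^2$, so that the whole problem collapses to showing that the off-diagonal blocks contribute nothing modulo~$p^2$.

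Here the factor ${2j \choose j}^2$ is decisive. By Kummer's theorem ${2j \choose j}$ is divisible by $p$ precisely when adding $j$ to itself in base $p$ produces at least one carry, and for every such $j$ the square ${2j \choose j}^2$ is divisible by $p^2$, which annihilates the corresponding term modulo~$p^2$. Thus the only surviving off-diagonal terms are those indexed by $j$ all of whose base-$p$ digits are less than $p/2$, and the entire statement reduces to the vanishing modulo~$p$ of one explicit residual sum $R_p(n)$ built from these terms together with the first-order corrections to the Jacobsthal step. I would then try to bring $R_p(n)$ to closed form, expecting it to reduce by standard harmonic-sum manipulations to a single arithmetic invariant of $p$ alone, independent of~$n$, so that the conjectured congruence for a given $p$ becomes equivalent to the vanishing of that invariant.

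The main obstacle is exactly this last point: identifying the invariant and explaining why it vanishes only for $p=2,89,179,643$ among primes below~$10^4$. Because the Lucas congruence modulo~$p$ holds for \emph{all} primes (by the Malik--Straub framework~\cite{malikstraub} applied to the binomial sum), the lift to modulus~$p^2$ is governed by a genuinely finer quantity, which I expect to be of modular origin --- plausibly a congruence condition on the $p$-th Hecke eigenvalue of the weight-two form $\eta_2\eta_4\eta_6\eta_{12}$, or of a weight-four form built from it by a Clausen-type identity, in the spirit of the analysis of~\cite{oss} for level~7. Pinning down this criterion, and proving that it isolates precisely those three primes, is the crux and appears to lie beyond current techniques; this is why the statement is offered only as a conjecture.

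Finally, the small-prime refinements ought to be accessible by elementary means. For $p=2$ and $p=3$ the exceptional sets ($n=1+2^{j}$, and $n=1$, $2$, $1+3^{j}$, $1+2\cdot 3^{j}$) are exactly the indices with very sparse base-$p$ expansions, which signals that the congruences modulo~$2^5$ and modulo~$9$ can be established by tracking the binomial sum, or equivalently by iterating the recurrence~\eqref{24rr}, modulo these fixed prime powers and reducing to a finite check on the low-order base-$p$ digits of~$n$. The periodicity modulo~$6$ in the $p=3$ statement should reflect the stabilisation of $T_{24}(3n)-T_{24}(n)$ modulo~$9$ as a function of $n\bmod 6$, which such a reduction would make explicit.
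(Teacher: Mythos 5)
The statement you are addressing is presented in the paper as a \emph{conjecture}: the author gives no proof, only the remark that it has been verified numerically for primes $p<10000$ and $pn\leq 100000$. Your proposal is therefore not being measured against an argument in the paper, and, as you yourself concede, it does not constitute a proof either. The decisive gap is exactly where you place it: you never identify the ``arithmetic invariant of $p$'' whose vanishing is supposed to characterise $p\in\{2,\,89,\,179,\,643\}$, nor do you give any mechanism that could single out such a sparse and irregular set of primes. Everything before that point is plausible but also unfinished: the reduction of the off-diagonal blocks to a residual sum $R_p(n)$ is asserted rather than carried out, and the claim that the diagonal block reproduces $T_{24}(n)$ modulo $p^2$ is precisely the kind of step that fails generically (if it held for all $p\geq 5$ together with the vanishing of the off-diagonal contribution, the supercongruence would hold for all such primes, contradicting the numerics). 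So the structure of your argument cannot be right as stated for general $p$; the obstruction must already appear in one of the blocks you are treating as negligible.

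Two further points. First, the Jacobsthal--Wolstenholme congruence you invoke requires $p\geq 5$, yet the refined assertions of the conjecture (modulo $2^5$ and modulo $9$) concern exactly $p=2$ and $p=3$; your proposed ``finite check on the low-order base-$p$ digits'' for these cases is a reasonable hope but is not carried out, and the exceptional sets $n=1+2^{j}$ etc.\ range over infinitely many $n$, so the reduction to a finite verification is itself a nontrivial claim that needs an argument (e.g.\ a $p$-adic stabilisation of $T_{24}(p n)-T_{24}(n)$ that you would have to prove). Second, you assert that the mod-$p$ Lucas congruence for $T_{24}$ ``holds for all primes by the Malik--Straub framework''; in the paper this Lucas congruence is itself only a conjecture for the level-24 sequence, so you should not treat it as established without checking that the binomial sum in Table~\ref{1to35} actually falls within the hypotheses of~\cite{malikstraub}. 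In short: your outline is a sensible research programme and correctly locates the crux, but it proves nothing that the paper does not already leave open, and its intermediate reductions would need substantial justification before they could be trusted.
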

The conjecture has been tested for primes $p<10000$ and for values of $n$ satisfying $pn\leq 100000$.

\bigskip
\noindent
{\bf Acknowledgement:} It is a pleasure to recall and acknowledge helpful discussions with Wadim Zudilin back when this work started in 2015.
I am grateful to the anonymous referee for suggestions that have improved the presentation of the work.

\bibliographystyle{amsalpha}

\end{document}